\DeclareMathOperator{\Cov}{Cov}
\def\f{\frac}
\def\cC{\mathcal C}
\def\cY{\mathcal Y}
\def\1{\mathbf{1}}
\def\L{{\mathbb L}}
\def\V{\mathbb{V}}
\def\W{\mathbb{W}}
\def\R{\mathbb{R}}
\def\E{{\mathbb E}}
\def\VV{\mathbf V}
\newcommand\blfootnote[1]{%
  \begin{NoHyper}%
  \renewcommand\thefootnote{}\footnote{#1}%
  \addtocounter{footnote}{-1}%
  \end{NoHyper}%
}
\tikzset{
	partition/.style={gray!50, line width=0.8cm,line cap=round,line join=round},
    pairing/.style={thick,draw=black,*-*,shorten >=-2.9pt,shorten <=-2.9pt},
  }
\newif\ifdark
\definecolor{darkred}{rgb}{0.9,0.2,0.2}
\definecolor{darkblue}{rgb}{0.7,0.3,1}
\definecolor{darkgreen}{rgb}{0.1,0.9,0.1}
\definecolor{pagebackground}{rgb}{.15,.21,.18}
\definecolor{pageforeground}{rgb}{.84,.84,.85}
\definecolor{darkred}{rgb}{0.7,0.1,0.1}
\definecolor{darkblue}{rgb}{0.4,0.1,0.8}
\definecolor{darkgreen}{rgb}{0.1,0.7,0.1}
\definecolor{pagebackground}{rgb}{1,1,1}
\definecolor{pageforeground}{rgb}{0,0,0}
\definecolor{newred}{RGB}{208,16,76}
\definecolor{newgreen}{RGB}{34,125,81}
\newcommand{\globalcolor}[1]{%
	\color{#1}\global\let\default@color\current@color
}
\DeclareSymbolFont{timesoperators}{T1}{ptm}{m}{n}
\renewcommand{\operator@font}{\mathgroup\symtimesoperators}
\DeclareMathAlphabet{\mathbbm}{U}{bbm}{m}{n}
\DeclareFontFamily{U}{BOONDOX-calo}{\skewchar\font=45 }
\DeclareFontShape{U}{BOONDOX-calo}{m}{n}{
	<-> s*[1.05] BOONDOX-r-calo}{}
\DeclareFontShape{U}{BOONDOX-calo}{b}{n}{
	<-> s*[1.05] BOONDOX-b-calo}{}
\DeclareMathAlphabet{\mcb}{U}{BOONDOX-calo}{m}{n}
\SetMathAlphabet{\mcb}{bold}{U}{BOONDOX-calo}{b}{n}
\newcommand*{\fat}{}
\DeclareRobustCommand*{\fat}{%
	\mathbin{\mathpalette\bigcdot@{}}}
\newcommand*{\bigcdot@scalefactor}{.5}
\newcommand*{\bigcdot@widthfactor}{1.15}
\newcommand*{\bigcdot@}[2]{%
	\sbox0{$#1\vcenter{}$}
	\sbox2{$#1\cdot\m@th$}%
	\hbox to \bigcdot@widthfactor\wd2{%
		\hfil
		\raise\ht0\hbox{%
			\scalebox{\bigcdot@scalefactor}{%
				\lower\ht0\hbox{$#1\bullet\m@th$}%
			}%
		}%
		\hfil
	}%
}
\DeclareRobustCommand{\TitleEquation}[2]{\texorpdfstring{\StrLeft{\f@series}{1}[\@firstchar]$\if b\@firstchar\boldsymbol{#1}\else#1\fi$}{#2}}
\newtheorem{assumption}[lemma]{Assumption}
\newtheorem{example}[lemma]{Example}
\numberwithin{equation}{section}
\def\slash{\leavevmode\unskip\kern0.18em/\penalty\exhyphenpenalty\kern0.18em}
\def\dash{\leavevmode\unskip\kern0.18em--\penalty\exhyphenpenalty\kern0.18em}
\let\epsilon\varepsilon
\def\${|\!|\!|}
\setlist{noitemsep,topsep=4pt}
\def\para_#1{/\!\!/_{\!#1}}
\begin{document}
\title{Fluctuations from a random fractional averaging limit}

\author{\normalsize Xue-Mei Li\inst{1,2}\orcidlink{0000-0003-1211-0250},
  Colin Piernot\inst2\orcidlink{0009-0007-4448-5328}, Szymon Sobczak\inst2 \orcidlink{0009-0006-7106-250X}, and
  Kexing Ying\inst2\orcidlink{0000-0002-8292-4746}}

\institute{Department of Mathematics, Imperial College London, U.K.
  \and Institute of Mathematics, EPFL, Switzerland}

\maketitle

\begin{abstract}
  We consider a system of multiscale stochastic differential equations whose slow component is driven 
  by a fractional Brownian motion with Hurst parameter $H>\f 12$. Under ergodic assumptions ensuring 
  the applicability of the fractional averaging and fractional homogenization theorems 
  \cite{Hairer-Li:20, Hairer-Li:22}, we establish a fluctuation result. The deviation of the slow 
  motion, scaled by $\epsilon^{\f 12-H}$, from its effective, time-dependent random limit converges, 
  as the time-separation scale $\epsilon\to 0$, to the solution of a stochastic differential equation 
  driven by a fractional Brownian motion and influenced by an additional space–time Gaussian field. 
  Since the averaging principle and the fractional homogenization hold in different modes of convergence, 
  obtaining the required joint convergence is a delicate matter. Moreover, neither the continuity of 
  the Itô–Lyons solution map nor the martingale method is directly applicable for our purposes, so 
  the proof requires several innovations. To establish the fluctuation theorem, we combine cumulant 
  methods with a residue lemma and formulate the enlarged system as a rough differential equation in 
  a suitable space.

  \noindent
  {\scriptsize {\it Keywords:} fluctuation, fractional averaging, diffusion creation, multiscale, 
    fractional Brownian motion, rough path}\\
  {\scriptsize \textit{MSC Subject classification:  60F99, 60H10, 60L20,  60G22}}
\end{abstract}

\blfootnote{Xue-Mei Li acknowledges support from the EPSRC grant EP/V026100/1 and the Swiss National 
  Science Foundation project MINT (10000849). The authors also acknowledge support from NCCR SwissMAP.}

\setcounter{tocdepth}{2}
\tableofcontents

\section{Introduction}

We consider the two-scale stochastic differential equation exhibiting both timescale separation and 
non-Markovian stochasticity
\begin{equation}\label{eq:x-epsilon}
  \dd x_t^\epsilon = f(x_t^\epsilon, y_t^\epsilon) \dd B^H_t +g(x_t^\epsilon, y_t^\epsilon) \dd t,
  \qquad  x^\epsilon_0 = x\in \mathbb{R}^d
\end{equation}
where $y_t^\epsilon=y_{ t/\epsilon}$ with $(y_t)_{t \ge 0}$ being a stationary Markov process on a 
separable metric space $\cY$ with Markov generator $\mathcal L$.  The process $(B^H_t)_{t \ge 0}$ 
denotes a fractional Brownian motion in $\R^m$ with Hurst parameter $H \in (\frac{1}{2}, 1)$, 
independent of $y$. We assume that  $f(x,y)\in {\mathbb L}(\R^m, \R^d)$ (is a linear map from 
$\R^m$ to $\R^d$), $g(x,y)\in \R^d$ and that they are sufficiently regular so that the integral 
can be understood in the Young sense.

It was shown in \cite{Hairer-Li:20} that a fractional averaging principle holds for the system 
\eqref{eq:x-epsilon}. As the scale-separation parameter~$\epsilon\to 0$, the fast process $y$ 
equilibrates on the timescale of $O(1)$ relevant to the slow variable $x^\epsilon$.
Consequently, the influence of $y$ on  $x^\epsilon$ averages out with respect to the invariant 
measure of $y$, leading to an autonomous effective equation whose solution approximates~$x^\epsilon$.
In particular, $x^\epsilon$ converges in probability in $C^\alpha$, for any $\alpha<H$, with 
respect to the joint randomness of $y$ and $B_t^H$, to an effective limit solving an equation of the 
same form:
\begin{equation}\label{eq:bar-x}
  \dd \bar x_t = \bar f(\bar x_t) \dd B^H_t+\bar g (\bar x_t)\dd t,\, \bar x_0 = x.
\end{equation}
The effective coefficients $\bar f, \bar g$ are obtained through naïve averaging. For any function 
$\phi:\mathcal{X}\times \cY\to \mathcal{X}$, we set
$$\bar \phi(x) = \int_\cY \phi(x,y)\dd \mu(y)$$
where $\mu$ denotes the invariant measure of the fast process $y$. In particular, one has the 
following limit in probability in $C^\alpha$
$$\int_{s}^{t} f(x, y_{r/\epsilon}) \dd B_r^H \stackrel{(\mathbb{P})}{\longrightarrow} \bar f(x) B_{s,t}^H,$$
where we use the shorthand $x_{st}=x_t-x_s$ for time increments.

Furthermore, in the case where the averaging limit is trivial, i.e. $\bar f=0$, a fractional 
generating diffusion limit theorem was established~\cite{Hairer-Li:22} for \eqref{eq:x-epsilon}, 
one has in particular the following convergence in distribution
$$\epsilon^{\f 12-H} \int_s^t f(x, y_{\f r \epsilon}) \dd B_r^H  \stackrel{(d)}{\longrightarrow}  W_{s,t}(x)$$
where $W_t(x)$ is a Gaussian field which is Brownian in time and has spatial covariance given by
\begin{equ}[e:defSigma]
  \Sigma(x, z)  = {1\over 2}\Gamma(2H+1)\sum_{k=1}^m\int f_k(x,y) \otimes \big(\CL^{1-2H} f_k\big)(z,y)\,\mu(dy)\;,
\end{equ}
with $P_t$ being the Markov semigroup associated to $y$, and
$$\CL^{1-2H} f = {1\over \Gamma(2H-1)}\int_0^\infty t^{2H-2} P_tf\dd t.$$
The last integral is clearly integrable at $t=0$ while integrability at $t=\infty$ follows from an 
ergodic assumption on $\CL$,  such as Assumption~\ref{ass:ergodic}.

For $H=\f 12$, this reduces to the classical stochastic averaging principle, while for $H=1$ it 
yields a functional central limit theorem. Indeed, for a zero-mean function $g: \cY\to \R$, one expects
$$\f 1{\sqrt{\epsilon}}\int_s^t g(y_{\f r \epsilon})
  \dd r  \to \sqrt{2\int_0^\infty \<P_sg, g\>_{L^2(\mu)} \dd s}  \; W_{s,t},$$
weakly, where $W$ is a standard Wiener process. Functional central limit theorems (extensions of
Donsker’s invariance principle) have been established and developed extensively in the probability 
literature, including \cite{Donsker51, Freidlin-Wentzell, Helland82, Masi-Ferrari-Goldstein-Wick84, Kipnis-Varadhan86}. 
See also the more recent related work \cite{Landim03,Li:08, Kmorowski-Landim-Olla12,  
Gautam-Komorowski-Novikov-Rhyzhik14, Li:16, Li:ODE, Wang-Chao-Duan, Roeckner-Xie-Yang:23}. For reduction 
to slow/fast systems, see \cite{Li:cons, Ye-Yang-Maggioni:24}. Slow/fast systems of equations with 
fractional Brownian motion as drivers have been studied recently; see \cite{Gehringer-Li21,Gehringer-Li22,
Gehringer-Li-Sieber:22, Eichinger-Kuehn-Neamu20, Blomker-Neamtu22, Alonso-Martin-Boedihardjo-Papavasiliou25, 
Berglund-Blessing25}, however the fractional averaging theorem and the fractional homogenization 
results in \cite{Hairer-Li:20, Hairer-Li:22} are the first of their kind. In particular, the 
effective limits differ from those in the earlier literature, and the associated scaling exponents 
also differ. We further note a recent surge of activity in multiscale analysis with memory
\cite{Nourdin-Peccati-Reinert:10,Gu-Bal:12,Friz-Gassaiat-Lyons:15,Solesne-Siragan-Konstantinos21,Solesne-Dang-Spiliopoulos24,Roeckner-Xie21,Han-Xu-Pei-Wu, 
Pei-Inahama-Xu21,Wang-Xu-Pei:23, Jaramillo-Nopurdin-Nualart-Peccati:23,Djurdjevac-Kremp-Perkowski25,Gailus-Gasteratos25, Friz-Salkeld-Wagenhofer25, Chekroun-Liu-McWilliams25,bianchi-Bonaccorsi-Canadas-Friesen,Brehier-Faye:25,Li-Gao-Qu:25,Xu-Lian-Wu:23}. For brevity, we omit references to two-scale SDEs driven by fractional 
Brownian motion that reduce to the classical averaging principle—either because the multiplicative 
coefficient in front of the fractional Brownian motion does not depend on one of the variables, 
or because the equation can be transformed into such a form.

The natural next step is to quantify the fluctuations of the solution around its effective 
non-deterministic motion. Guided by the fractional homogenization theorem,  we scale the difference 
$x_t^\epsilon-\bar x_t$ by $\epsilon^{\f \12 -H}$. Under this scaling, the drift  becomes negligible, 
and we therefore set it to zero from now on.

We will see below that, in general, the random fluctuation is not a Markov process and that the 
effective limit involves both Itô integration and integration with respect to a fractional Brownian 
motion. This stands in contrast with the fractional homogenization limit in \cite{Hairer-Li:22}, 
where the limit is Markovian and the long-range memory is lost.

Our main theorem is formulated as follows. Writing $f=(f_1, \dots, f_d)$ with each 
$f_i: \R^m \to \R$, we define
\begin{equ}\label{e:v-rp}
  \begin{aligned}
    V_{s,t}^\epsilon := x \mapsto V^{\eps}_{s,t}(x) 
      = \epsilon^{\f 12-H}\int_s^t f(x, y^\epsilon_r) - \bar f(x) \dd B^H_r, \quad x\in \mathbb{R}^d,
  \end{aligned}\end{equ}
which is a function valued process defined point-wise as above.

\begin{theorem}\label{thm:main}
  Suppose that  $f$ satisfies Assumption~\ref{ass:C3} and let $x_t^\epsilon$ denote the solution to 
  \begin{equation}\label{main-equation}
    \dd x_t^\epsilon = f(x_t^\epsilon, y_{\f t\epsilon})\, \dd B^H_t, \qquad x_0^\epsilon=x_0.
  \end{equation}
  Assume that $y_t$ satisfies Assumption~\ref{ass:ergodic}. Let $V$ be a space-time Gaussian field 
  with the covariance
  \begin{equation}\label{eq:V-covariance}
    \mathbb{E}[V_{s, t}(x) \otimes V_{u, v}(\bar x)]
    = |[s, t] \cap [u, v]|\, (\Sigma(x, \bar x) + \Sigma(\bar x, x)^{\top}),
  \end{equation}
  where $\Sigma$ is given by \eqref{e:defSigma}. Then the following statements hold for any
  $\alpha \in \left((1 - H) \wedge \tfrac 13, \tfrac 12\right)$
  \begin{enumerate}
    \item $V^\epsilon$ converges in law to $V$ in $C^\alpha([0,T]; C^3_b(\mathbb{R}^d, \mathbb{R}^d))$.
    \item Furthermore,  the fluctuation process
          \begin{equ}\label{define-z}
            z_t^\epsilon := \epsilon^{\frac{1}{2}-H} \left(x^\epsilon_t - \bar x_t\right)
          \end{equ}
          converges in law in $C^\alpha([0,T]; \mathbb{R}^d)$ to the solution of the equation  
          \begin{equation}\label{eq:z-lim-eq}
            z_t =  V_t(\bar x_t) + \int_0^t \bar f(\bar x_s) z_s \,\dd B^H_s,
          \end{equation}
          where $B^H$ is independent of $ V(\bar x)$, and the integral is understood in the Young sense.
  \end{enumerate} \end{theorem}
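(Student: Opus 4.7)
The plan is to address the two assertions in sequence, with the key technical input being a detailed understanding of the joint law of $(V^\epsilon, B^H)$.

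For part~(1), pointwise convergence of $V^\epsilon_{s,t}(x)$ to $V_{s,t}(x)$ at each fixed $x$ is essentially the content of the fractional homogenization theorem of \cite{Hairer-Li:22}. To lift this to the functional topology of $C^\alpha([0,T]; C^3_b(\mathbb{R}^d, \mathbb{R}^d))$, one would combine convergence of the finite-dimensional distributions with Kolmogorov tightness applied jointly in time and in the spatial variable. The required uniform-in-$\epsilon$ moment bounds on time and space increments of $V^\epsilon$ and of its first three spatial derivatives reduce to a cumulant computation: each $V^\epsilon_{s,t}(x)$ is a polynomial functional of the Gaussian process $B^H$, so its $p$-th moments can be expanded into sums over pairings whose contribution is controlled using the $C^3_b$ hypothesis on $f$ together with the scaling structure already exploited in \cite{Hairer-Li:22}. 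A residue lemma then handles Taylor remainders when expanding $f$ in the $x$ variable.

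For part~(2), write the fluctuation as
\begin{equation*}
  z^\epsilon_t = \int_0^t \dd V^\epsilon_s(x^\epsilon_s) + \int_0^t A^\epsilon_s\, z^\epsilon_s\,\dd B^H_s, \qquad A^\epsilon_s := \int_0^1 D\bar f\bigl(\bar x_s + \theta(x^\epsilon_s - \bar x_s)\bigr)\,\dd\theta,
\end{equation*}
in which the first integral means integrating the $C^3_b$-valued path $V^\epsilon$ along $x^\epsilon$. When $1/2 < H < 2/3$ the theorem allows $\alpha \in (1/3, 1-H)$, for which $\alpha + H < 1$ and Young integration fails at the quoted regularity; level-two rough path theory is therefore natural, and one would build a joint rough path over the product $C^3_b(\mathbb{R}^d, \mathbb{R}^d) \oplus \mathbb{R}^m$ from $(V^\epsilon, B^H)$, constructing the four iterated-integral channels pathwise. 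Once joint convergence of this enlarged driver to $(V, B^H)$ together with its iterated integrals is established, continuity of the Itô--Lyons map applied to the rough differential equation governing $(\bar x, z^\epsilon)$ delivers the convergence of $z^\epsilon$ to the solution of \eqref{eq:z-lim-eq}.

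The hard part is this joint convergence of $(V^\epsilon, B^H)$ with their iterated integrals, with the limiting pair being independent, i.e.\ $V \perp B^H$. While the two-point function $\mathbb{E}[V^\epsilon_{s,t}(x) \otimes B^H_{u,v}]$ already vanishes for every $\epsilon$ by stationarity of $y$ and the defining identity $\bar f = \mathbb{E}[f(\cdot, y)]$, the cross iterated integrals $\int V^\epsilon \otimes \dd B^H$ and $\int B^H \otimes \dd V^\epsilon$ are genuine higher-order functionals of $B^H$---since $V^\epsilon$ is itself an integral against $B^H$---and their limits must be identified with the \emph{independent} iterated integrals of $V$ and $B^H$. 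This is where the cumulant-plus-residue toolkit is decisive: systematic Gaussian integration by parts on the joint cumulants isolates the surviving pairing diagrams in the scaling limit, and the residue lemma provides the quantitative decay of the remaining terms as $\epsilon \to 0$. With this analytic input in hand, the passage to the limit in the rough differential equation is a standard continuity argument.
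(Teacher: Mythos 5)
Your skeleton matches the paper in its two key choices: the Taylor expansion of $\bar f$ between $x^\epsilon$ and $\bar x$ (so that the linear term is integrated against $B^H$ pathwise), and the recognition that the crux is joint convergence of $V^\epsilon$ with the driver of the averaged equation, with independence in the limit, via cumulants. However, you misplace where the second-order (rough path) analysis is actually needed. The integral $\int_0^t A^\epsilon_s z^\epsilon_s\,\dd B^H_s$ does \emph{not} require level-two rough paths: $A^\epsilon$ and $B^H$ are genuinely $C^{H-}$ and $z^\epsilon$ is genuinely $C^{\frac{1}{2}-}$, so the sum of regularities exceeds $1$ and the integral is Young; the exponent $\alpha$ in the statement is merely the topology in which convergence is asserted, not the regularity of the objects. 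Conversely, the cross iterated integrals $\int V^\epsilon\otimes\dd B^H$ and $\int B^H\otimes\dd V^\epsilon$, which you propose to control by Gaussian integration by parts on joint cumulants, are themselves Young integrals (again $H-\,+\,\frac{1}{2}- > 1$), hence jointly continuous bilinear functionals of the paths; their convergence is automatic once the path components converge jointly (Lemma~\ref{lem:lift-conv}). The cumulant computation is only needed for the \emph{linear} combinations $\lambda U + \mu V^\epsilon$, to show the joint limit is Gaussian and identify the (vanishing) cross-covariance; the only object requiring genuine second-order input is $\mathbb{V}^\epsilon$, imported from \cite{Hairer-Li:22}. Your route through cumulants of quadratic functionals is both unnecessary and left entirely unexecuted.

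The genuine gap is the final limit passage, which you dismiss as "a standard continuity argument" for the It\^o--Lyons map. The equation for $z^\epsilon$ is a \emph{linear} equation with a \emph{time-dependent random} coefficient $A^\epsilon_s$ (depending on $x^\epsilon_s$ and $\bar x_s$) and a forcing $V^\epsilon(x^\epsilon)$ converging only in law; the standard continuity theorem for RDEs with autonomous $C^3_b$ coefficients does not apply, and linearity makes the coefficient unbounded. This is exactly the point the paper flags as requiring a new tool: a Gr\"onwall-type stability estimate for linear controlled Young differential equations with time-dependent coefficients (the residue Lemma~\ref{lem:residue-yde}), which is inherently pathwise, combined with the Skorokhod embedding theorem to upgrade the convergence in law of $(x^\epsilon, V^\epsilon(x^\epsilon), B^H)$ to almost sure convergence on a common probability space (one cannot fix a common $B^H$ for all $\epsilon$, by the counterexample in \cite{Ondrejat:25}). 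Without a stability lemma of this type and without addressing the mismatch between convergence in law and pathwise estimates, your final step does not close.
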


The first statement is included in Proposition \ref{prop:def-Veps}. The rest of the proof is presented 
in Section~\ref{subsec:proof-main-theorem}, after establishing the convergence of $ V^\epsilon(x^\epsilon)_t$.

In the case $\bar f=0$ the result is similar to the fractional diffusion creation and fractional 
central limit theorem presented in \cite{Hairer-Li:22}. However, the theorem does not follow directly, 
since the proof requires controlling both the weak convergence of the fluctuation towards the Markovian 
component of the limit and the convergence of the non-Markovian component. The main challenge in 
establishing a fluctuation limit around a non-trivial random averaged limit, namely when
$$\bar x_t=\int_0^t\bar f(\bar x_s) \dd B_s^H\not =0,$$ 
lies in proving the joint convergence of $x^\epsilon$ and $V^\epsilon$.

Our approach relies on classical ideas from probability theory applied to function valued Gaussian 
processes. We first establish the joint convergence of the relevant Gaussian processes together with 
their rough-path lifts, and then apply an improved version of the residue lemma. The residue lemma 
method was used in \cite{Hairer-Li:20} for a fractional averaging theorem, but not previously in the 
context of fluctuation limits. To achieve this, we view $ z_t^\epsilon$ as the solution to a random 
time-dependent controlled Young differential equation (YDE).

We remark that the convergence of $z^\epsilon$ cannot be treated in the classical RDE framework, 
due to the time dependence and additional randomness in the linear coefficient in the expression (\ref{eq:z-eps-relation}).
To obtain the limit result, we provide a suitable continuity estimate in form of the residue Lemma \ref{lem:residue-yde},
tailored to the structure of the problem.

\subsection{Proof strategy}\label{sec:proof-strat}
In this subsection we discuss the main steps in the proof of Theorem~\ref{thm:main}.

{\bf A.} Let us start by deriving an expression for the rescaled difference process
$$z_t^\epsilon = \epsilon^{\frac{1}{2}-H} \left(x^\epsilon -\bar x \right)
  = \epsilon^{\frac{1}{2}-H} \int_0^t f(x_s^\epsilon, y_s^\epsilon) - \bar{f}(\bar x_s) \dd B^H_s.$$
To simplify the expression inside the integral, we Taylor expand $\bar f$ around $\bar x^\epsilon_s$:
\begin{equ}
  \bar{f}( \bar x_s) = \bar{f}(x_s^\epsilon) 
    + \int_0^1 D\bar f (\theta x^\epsilon_s + (1 - \theta)\bar x_s)(\bar x_s - x^\epsilon_s) \dd \theta.
\end{equ}
This allows us to rewrite $z_t^\epsilon$ as
\begin{equ}\label{eq:z-epsilon-YDE}
  z_t^\epsilon = \epsilon^{\frac{1}{2}-H} \int_0^t f(x_s^\epsilon, y_s^\epsilon) - \bar{f}(x_s^\epsilon) \dd B^H_s 
    + \int_0^1 D\bar f (\theta x^\epsilon_s + (1 - \theta)\bar x_s)z^\epsilon_s \dd \theta.
\end{equ}
Setting\begin{equation}
  A^\epsilon_t := \int_0^1 D\bar f (\theta x^\epsilon_t + (1 - \theta)\bar x_t) \dd \theta,
\end{equation}
and denoting
\begin{equation}\label{eq:V-epsilon-def}
  V^\epsilon(x^\epsilon)_t := \epsilon^{\f 12 - H} \int_0^t f(x^\epsilon_r, y^\epsilon_r) - \bar f(x^\epsilon_r) \dd B^H_r
\end{equation}
we get
\begin{equ} \label{eq:z-eps-relation}
  z_t^\epsilon = V^\epsilon(x^\epsilon)_t + \int_0^t A^\epsilon_s z^\epsilon_s \dd s.
\end{equ}
The crux now lies in understanding the $V^\epsilon(x^\epsilon)_t$ term, and comparing the above 
expression to its limiting counterpart~\eqref{eq:z-lim-eq}.

{\bf B.}
As \cite{Hairer-Li:20} demonstrated the convergence of $x^\epsilon$ to
$\bar x$ in the H\"older topology, the convergence of $A^\epsilon$ is straightforward and we
focus on the convergence of $V^\epsilon(x^\epsilon)$ to $V(\bar x)$ as $\epsilon \to 0$.
To this end, by writing $V^\epsilon(x^\epsilon)$ as the solution to a rough differential equation (RDE),
we leverage the continuity theorem of the It\^o-Lyons map to obtain the desired convergence. To
be more precise, as was constructed in \cite{Hairer-Li:22}
(in which the notation $(Z^\epsilon, \mathbb{Z}^\epsilon)$ was used instead), one can define an 
infinite-dimensional rough path $\mathbf{V}^\epsilon = (V^\epsilon, \mathbb{V}^\epsilon)$ on the 
function space $C^3_b(\mathbb{R}^d, \mathbb{R}^d)$ with point-wise projection of the path component given by
\begin{equation}
  V^\epsilon(x)_t := \epsilon^{\f 12 - H} \int_0^t (f(x, Y^\epsilon_r) - \bar f(x)) \dd B^H_r.
\end{equation}
Then, denoting $\delta : \mathbb{R}^d \to \L(C^3_b, \mathbb{R}^d)$
as the evaluation map, i.e. $\delta(x)f = f(x)$, one can express $V^\epsilon(x^\epsilon)_t$ as
a rough integral defined by (c.f. also \cite{Gehringer-Li-Sieber:22} for which a similar idea was implemented
for a different set of rough paths)
$$V^\epsilon(x^\epsilon)_t = \int_0^t \delta(x^\epsilon_s) \dd \mathbf{V}^\epsilon_s.$$
With this in mind, we implement a similar construction allowing us to view $V^\epsilon(x^\epsilon)_t$
as the solution to an RDE. Namely, in view of the following system of RDEs
$$\begin{cases}
    \dd  x^\epsilon_t & = f(x^\epsilon_t, Y^\epsilon_t) \dd B^H_t = \delta( x^\epsilon) \dd \mathbf{U}^\epsilon, \\
    \dd v^\epsilon_t  & =  \delta( x^\epsilon) \dd \mathbf{V}^\epsilon
  \end{cases}$$
where $\mathbf{U}^\epsilon$ is also an infinite-dimensional rough path on $C^3_b$ over the path
$x \mapsto \int_0^t f(x, Y^\epsilon_r) \dd B^H_r$,
if $\mathbf{W}^\epsilon$ is a rough path on $(C^3_b)^{\oplus 2}$ over
$(V^\epsilon, U^\epsilon)$ (c.f. Section~\ref{sec:construction-rp}), introducing the map
$\delta_1 : (\mathbb{R}^d)^{\oplus 2} \to \CL((C^3_b)^{\oplus 2}, (\mathbb{R}^d)^{\oplus 2})$ defined by
\begin{equation}\label{eq:delta-one-def}
  \delta_1(x, y)(f, g) = (f(x), g(x))
\end{equation}
we have that $(x^\epsilon, V^\epsilon(x^\epsilon))$ is the solution to the RDE
$$\dd u_t^\epsilon = \delta_1 (u^\epsilon) \dd \mathbf{W}^\epsilon.$$
Consequently, by the continuity of the It\^o-Lyons map, we have that $V^\epsilon(x^\epsilon)$
converges should $\mathbf{W}^\epsilon$ converges as a rough path. This is the content of
Section~\ref{sec:V-epsilon-cvg}.

\begin{remark}
  Recall that taking the fractional averaging limit for $\bar f \neq 0$ is conceptually distinct 
  from taking the homogenization limit: the Wiener limit from the fractional homogenization regime 
  arises from weak convergence, whereas the averaging limit comes from an annealed convergence in 
  probability. The required joint convergence of $(V^\epsilon(x)_t, U_t^\epsilon (x))$ presents a 
  subtle difficulty. For example, despite that the convergence
  $$V^\epsilon_{s,t}(x) =\epsilon^{\f 12-H}\int_s^t f(x, y_r) -\bar f (x)\dd B_r^H \to W_{s,t},$$ 
  implies that 
  $$U_t^\epsilon (x)-\bar f(x)B_{s, t} = \epsilon^{H-\f 12} V_t^\epsilon (x) 
    = \int_s^t f(x, y_{\f r \epsilon}) -\bar f (x)\dd B_r^H \to 0,$$
  one cannot automatically deduce the joint convergence of $(V^\epsilon(x)_t, U_t^\epsilon (x))$. 
  To this end, we argue by analyzing the joint cumulants of the relevant processes.
\end{remark}

{\bf C.}
Recall that
\begin{equ}
  z_t^\epsilon = V^\epsilon(x^\epsilon)_t + \int_0^t A^\epsilon_s z^\epsilon_s \dd s.
\end{equ}
and by previous arguments, we have argued that $A^\epsilon\to D \bar f(\bar x)$, and that 
$V^\epsilon(x^\epsilon)$ converges in law to $V(\bar x)$. At this stage, we apply the Skorohod 
embedding theorem, which allows us to view all the equations in a common probability space, 
facilitating the use of the deterministic residue Lemma \ref{lem:residue-yde}
to compare the above equation with the limiting equation
\begin{equation*}
  z_t =  V_t(\bar x_t) + \int_0^t \bar f(\bar x_s) z_s \,\dd B^H_s.
\end{equation*}
This is the content of Section~\ref{sec:proof-main}.
\begin{remark}
  We make a brief remark regarding the choice of the decomposition~\eqref{eq:z-epsilon-YDE}. It might
  be tempting to consider the alternative decomposition in which the Taylor expansion is performed
  around $\bar x$. Namely, taking $V^\epsilon$ as before, one can instead write
  $$\dd z_t^\epsilon = \dd V^\epsilon(\bar x)_t + \tilde A^\epsilon_t z_t^\epsilon \dd B^H_t$$
  where $\tilde A^\epsilon_t$ is now defined as
  $$\tilde A^\epsilon_t := \int_0^1 D f(\theta x^\epsilon_t + (1 - \theta)\bar x_t, Y^\epsilon_t) \dd \theta.$$
  This alternative decomposition has the advantage that the dependence on $\epsilon$ in $V^\epsilon(\bar x)$ now
  solely rest on $V^\epsilon$. Thus, the convergence of $V^\epsilon(\bar x)$ becomes much simpler.
  Nonetheless, this advantage comes at the cost of that the integral
  $\int_0^t \tilde A^\epsilon_s z_s^\epsilon \dd B^H_s$ can no longer be interpreted path-wise.
  This is in contrast to the previous decomposition in which the corresponding integral can be simply interpreted
  in the Young sense. Instead, this integral can only be made to make sense via the Stochastic sewing lemma
  (c.f. \cite[Section 3.3]{Hairer-Li:20}) from which one can only obtain probabilistic estimates for
  this integral. This presents a difficulty since the residue lemma we rely on is inherently 
  path-wise in which upgrading it to an estimate in moments would require bounds of the form  
  $\sup_\epsilon\mathbb{E}\exp \|x^\epsilon\|_{\frac{1}{2}-}^\beta < \infty$ for some $\beta > 2$.
  This would consequently demand unrealistically strong assumptions on~$f$.
\end{remark}

\subsection{Notations}
We provide here the notations used throughout this article.
\begin{itemize}
  \item $(\Omega, \mathcal{F}, \mathbb{P})$ is a Probability space, and $\|\cdot\|_p$
        denotes the norm in $L^p(\Omega;\mathbb{P})$.
  \item For a one-parameter process $x : [0,T] \to V$ and $s\leq t$ we denote $x_{s,t} := x_t-x_s.$
  \item We write $\Delta_T := \{(s,t) \in [0,T]^2 : s \leq t\}$.
  \item For a two-parameter process $A$ on $\Delta_T$, and $0<s<u<t\leq T$ we define 
    $$\delta A_{s,u,t} := A_{s,t} -A_{s,u} -A_{u,t}. $$ 
    We also set $$\|A\|_\alpha := \sup_{(s,t) \in \Delta_T}\frac{\|A_{s,t}\|}{|t-s|^\alpha}.$$
  \item For an $\alpha$-H\"older path $X: [0,T] \to V$ we denote by $\mathbf{X} := (X, \mathbb{X})$ 
    a rough path over $X$, belonging to the space $\mathcal{C}^\alpha\left([0,T]; V \right)$.
  \item We use the notation $\|f\|_{L^pC^\alpha} := \|\|f\|_{C^\alpha}\|_{L^p}$ where the H\"older 
    norm is in time, and $L^p$ in probability, with the underlying spaces being clear from the context.
  \item Finally $\L(U; V)$ denotes the space of bounded linear operators between two linear spaces $U$ and $V$.
  \item $C_b^k(U)$ denotes the space of real valued bounded $C^k$ function with bounded derivatives.
\end{itemize}

\subsection{Preliminaries on rough path theory}
We briefly recall the basic notions of rough path theory that will be used in this work and refer 
to~\cite{Friz-Hairer:20} for a comprehensive introduction.
\begin{definition}
  Let $V$ be a Banach space and $\alpha \in \left( \frac{1}{3}, \frac{1}{2} \right]$.
  A pair $\mathbf{X} = (X, \mathbb{X}) \in C^\alpha([0,T];V) \times C^{2\alpha} (\Delta_T; V \otimes V)$ 
  is called an $\alpha$-H\"older rough path if it satisfies Chen's relation
  \begin{equation}
    \delta \mathbb{X}_{s,u,t} = X_{s,u} \otimes X_{u,t}, \quad \forall\ 0 \leq s \leq u \leq t \leq T.
  \end{equation}
\end{definition}
We equip the space of $\alpha$-Hölder rough paths with the (inhomogeneous) rough path metric
\begin{equation*}
  \rho_{\alpha}(\mathbf{X},\mathbf{Y}) := \norm{X-Y}_{C^\alpha([0,T];V)} 
    + \norm{\mathbb{X}-\mathbb{Y}}_{C^{2\alpha}(\Delta_T;V \otimes V)}
\end{equation*}
which induces a natural topology on this space. We denote the space of $\alpha$-Hölder rough paths 
over $V$ by $\mathcal{C}^\alpha([0,T];V)$.
\begin{definition}
  An $\alpha$-H\"older rough path $\mathbf{X} = (X, \mathbb{X})$  is said to be geometric if there 
  exists a sequence of smooth paths $X^n \in C^\infty([0,T];V)$ such that the canonical paths 
  $\mathbf{X}^n = (X^n, \mathbb{X}^n)$, with lifts defined by
  \begin{equation*}
    \mathbb{X}^n_{s,t} := \int_s^t X^n_{s,r} \otimes \dd X^n_r,
  \end{equation*}
  converge to $\mathbf{X}$ in the rough path metric $\rho_\alpha$.
\end{definition}

\begin{remark}
  We note that the space of geometric rough paths is separable, since the path components actually 
  live in the so-called little H\"older spaces.
\end{remark}

We now introduce the central notion of a controlled rough path.

\begin{definition}
  Let $X \in \cC^\alpha([0,T]; V)$. We say that $Y\in C^\alpha([0,T]; W)$ is controlled by $X$ if there exists
  $Y' \in C^\alpha([0,T]; \mathbb{L}(V;W))$ such that the remainder
  $$ R^Y_{s,t} := Y_{s,t} - Y'_{s}X_{s,t} $$
  belongs to $C^{2\alpha}(\Delta_T; W)$. The  $(Y,Y')$ is then called a controlled rough path, and we write
  $$ (Y,Y')\in \mathcal{D}^{2\alpha}_X([0,T]; W).$$
  We endow this space with the seminorm $$ \norm{Y,Y'}_{X,2\alpha} := \norm{Y'}_{\alpha} + \norm{R^Y}_{2\alpha}. $$
\end{definition}
We next consider the RDE
\begin{equation}
  \dd Y_t = f(Y_t) \dd \mathbf{X}_t, \qquad Y_0=\xi.
\end{equation}
For vector fields $f$ of class $C^3_b$ (or alternatively satisfying the growth conditions of 
\cite{Li-Ying:25}) it is known that the equation is well-posed, and the solution map
is continuous with respect to the controlled rough path topology. More precisely we have the following theorem.
\begin{theorem}[Theorem 8.5 \cite{Hairer-Li:20}]
  Let $f\in C^3_b$ and $\alpha \in \left( \frac{1}{3}, \frac{1}{2} \right]$. Let $(Y, f(Y))\in \mathcal{D}^{2\alpha}_X$
  be the unique solution to the RDE
  $$ \dd Y_t = f(Y_t) \dd \mathbf{X}_t, \qquad Y_0=\xi. $$
  Similarly, let $(\tilde{Y}, f(\tilde Y))\in \mathcal{D}^{2\alpha}_{\tilde{X}}$ be the unique 
  solution to the RDE driven by $\tilde {\mathbf{X}}$ and started at $\tilde {\xi}$, where 
  $\mathbf{X}, \tilde {\mathbf{X}}$ are $\alpha$-H\"older rough paths. Assume that
  $$\norm{X}_\alpha + \sqrt{\norm{\mathbb{X}}_{2\alpha}} \wedge \norm{\tilde {X}}_\alpha +
    \sqrt{\norm{ \tilde{\mathbb{X}}}_{2\alpha}} \leq M $$
  Then there exists a constant $C_M>0$ such that
  $$d_{X, \tilde{X}, 2\alpha}(Y, f(Y); \tilde{Y}, f(\tilde{Y})) 
    \leq C_M \left(\abs{\xi-\tilde{\xi}} +\rho_\alpha(\mathbf{X},
    \tilde {\mathbf{X}}) \right), $$ and in particular
  $$ \norm{Y-\tilde{Y}}_\alpha \leq C_M \left( \abs{\xi-\tilde{\xi}} +\rho_\alpha(\mathbf{X},
    \tilde {\mathbf{X}}) \right), $$
  where $d_{X, \tilde{X}, 2\alpha}$ denotes the metric on the space of controlled rough paths
  defined using the semi-norm above.
\end{theorem}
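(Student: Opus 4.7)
The plan is to set up the RDE as a fixed-point problem in the space of controlled rough paths, and then derive the stability bound from two continuity properties of the rough integral: Lipschitzness in the controlled integrand, and Lipschitzness in the driving rough path. I would proceed in three stages.

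First, I would reformulate the problem as a fixed point: define $\mathcal{M}_{\mathbf{X},\xi}$ sending $(Y, Y') \in \mathcal{D}^{2\alpha}_X([0,\tau]; \R^d)$ with $Y_0 = \xi$ to $\bigl(\xi + \int_0^\cdot f(Y_s)\, \dd\mathbf{X}_s,\ f(Y)\bigr)$, where the integral is defined via the sewing lemma. Using the composition estimate---which for $f \in C^3_b$ states that $(f(Y), Df(Y) Y')$ is again a controlled rough path with norm bounded in terms of $\|f\|_{C^3_b}$ and $\|(Y,Y')\|_{X,2\alpha}$, and depends locally Lipschitzly on $(Y,Y')$---together with the continuity of rough integration in its integrand, one verifies that $\mathcal{M}_{\mathbf{X},\xi}$ is a contraction on a sufficiently small interval $[0,\tau]$, whose length depends only on $M$ and $\|f\|_{C^3_b}$. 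This gives local existence and uniqueness; the boundedness of $f$ and its derivatives provides a uniform a priori bound $\|(Y, f(Y))\|_{X, 2\alpha} \le C_M$, enabling iteration to $[0,T]$.

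Second, for the stability bound, I would subtract the two equations and decompose the resulting difference as
$$\int_s^t f(Y)\,\dd\mathbf{X} - \int_s^t f(\tilde Y)\,\dd\tilde{\mathbf{X}} = \int_s^t (f(Y) - f(\tilde Y))\,\dd\mathbf{X} + \int_s^t f(\tilde Y)\,\dd(\mathbf{X} - \tilde{\mathbf{X}}).$$
The two aforementioned continuity properties control the right-hand side by $C_M\bigl(d_{X,\tilde X,2\alpha}(Y, f(Y); \tilde Y, f(\tilde Y)) + \rho_\alpha(\mathbf{X}, \tilde{\mathbf{X}})\bigr)$, and the initial data contributes $|\xi - \tilde\xi|$. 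Choosing $\tau$ small enough (depending only on $M$ and $\|f\|_{C^3_b}$) allows the controlled-rough-path norm of the difference to be absorbed on the left-hand side, yielding the estimate on $[0,\tau]$; concatenating over $\lceil T/\tau\rceil$ subintervals then propagates it to $[0,T]$, each step costing only a multiplicative constant depending on $M$ and $\|f\|_{C^3_b}$, which is how the final constant $C_M$ arises.

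The main obstacle is the inhomogeneous nature of the metric $d_{X,\tilde X,2\alpha}$, which simultaneously compares the Gubinelli derivatives $f(Y)$ and $f(\tilde Y)$ and the remainders $R^Y_{s,t} = Y_{s,t} - f(Y_s) X_{s,t}$, $R^{\tilde Y}_{s,t} = \tilde Y_{s,t} - f(\tilde Y_s) \tilde X_{s,t}$, even though the reference paths $X$, $\tilde X$ themselves differ. The technical core is the Lipschitz dependence of the sewing map in both the germ and the driver; the $C^3_b$ hypothesis enters precisely here, since one needs to Taylor-expand $f$ to second order along $X$ and $\tilde X$ in parallel in order to bound the mixed second-order remainder $f(Y_s) - f(\tilde Y_s) - Df(\tilde Y_s)(Y_s - \tilde Y_s)$ in controlled-rough-path norm, while at the same time tracking the difference of the iterated integrals $\mathbb{X} - \tilde{\mathbb{X}}$ picked up by the sewing procedure.
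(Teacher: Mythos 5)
Your proposal is correct and follows essentially the same route as the proof in the cited reference (Theorem 8.5 of \cite{Hairer-Li:20}, itself the standard argument of \cite{Friz-Hairer:20}): local Lipschitz estimates for the rough integral in both the controlled integrand and the driver, the $C^3_b$ composition lemma, a contraction/absorption step on a short interval of length depending only on $M$ and $\|f\|_{C^3_b}$, and concatenation to $[0,T]$. The paper itself states this theorem without proof, so there is nothing further to compare.
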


\subsubsection{Ergodic and regularity conditions}
We state the spectral gap assumption imposed on the Markov process $y$ on $\CY$.
Let
$E_0\subset E_1\subset E_2\subset \dots \subset  L^1(\CY,\mu)$
be a sequence of Banach spaces containing the constants such that
point-wise multiplication $$E_0 \times E_n \to E_{n+1}$$ is  continuous, for every $n\ge 0$. Note 
that $E_0 \subset \cap_{p\ge 1} L^p$.

\begin{assumption}\label{ass:ergodic}
  Assume that for each $n\ge 1$ the semigroup $P_t$ associated to $y$ extends to a strongly 
  continuous semigroup on $E_n$ and there exist constants $C$ and $c> 0$ (possibly depending on $n$) 
  such that, for every $f \in E_n$ with $\int_\CY f d\mu=0$, one has
  \begin{equ}[e:spectralGap]
    \|P_t f\|_{E_n} \le C e^{-ct} \|f\|_{E_n}\;.
  \end{equ}
\end{assumption}

\begin{example}For the Ornstein-Uhlenbeck process $dy_t=-Ay_t+dW_t$ where $A$ is a symmetric positive 
  matrix, we take for $n=0,1,2,\dots$, $E_n={\mathcal B}_V$, the space of functions from $\cY$ to 
  $\R$ such that 
  $$\|f\|_{V} \eqdef \sup_{y \in \CY}{|f(y)|\over V(y)}$$
  is finite, and $V(x)=1+ |x|^2$.  Then $V$ is a Lyapunov function, and the spectral gap 
  property~\ref{ass:ergodic} follows from the Foster–Lyapunov criterion. This argument extends to 
  more general strong Feller SDEs on a Banach space that admit a Lyapunov function and are irreducible
  \cite{HarrisOrig, Harris, HMS}.
\end{example}

Note that although the Ornstein-Uhlenbeck process on $\R^d$ converges in the total variation norm, 
it does not satisfy the $L^\infty$ spectral gap condition. The rate of convergence of its densities, 
$|p_t(x,y)-p(y)|$,  is not uniform in $x$; hence exponential ergodicity cannot be extended to the 
$L^\infty$ norm. Consider a smooth function that vanishes in $B_1$ (the ball of radius $1$) and 
takes the value $1$ on $B_R^c$. Because the Ornstein–Uhlenbeck process is a $C_0$-diffusion, 
$\lim_{x\to \infty} P_tf(x)= 1$ for any $t>0$, while $\int_{\R^d} f \dd \mu<1$; thus  $P_t f$ does 
not converge to $\int_{\R^d} f$ in $L^\infty$.

\begin{example}
  Let $F_k$ be vector fields with  $F_0 \in C_b^2$ and $F_k \in C_b^3$ for $k > 0$. Assuming uniform 
  ellipticity, the solutions of the SDE
  $$\dd y_t^\eps = {1\over \eps} F_0(y_t^\eps) \dd t 
    + {1\over \sqrt \eps}\sum_{k=1}^m F_k(y_t^\eps)\circ \dd W_t^k\;$$
  where $W_t=(W_t^1, \dots, W_t^m)$  is an $ m$-dimensional standard Wiener process independent of $B$,
  satisfies the conditions of Assumption \ref{ass:ergodic}. In this case, the associated generator 
  $\CL = F_0 + {1\over 2} \sum_{i=1}^{m} (F_i)^2$ and $\CL^*\mu=0$. It has a spectral gap in $L^2$. 
  Since it is instantaneously smoothing, and maps $L^p$ to $L^q$,  it extends to a strongly 
  continuous semigroup on $L^p$ on $p\in (2,\infty)$, and has a spectral gap in $L^p$.
\end{example}

We also assume conditions on the function $f$.
\begin{assumption}\label{ass:C3}
  The map $x \mapsto f(x,\cdot)$ is of class $C^4$ with values in $E_0\subset L^2(\cY,\mu)$ and 
  there exists an exponent $\kappa > 0$ such that, for every multi-index $\ell$ of length at most $4$,
  $$\|D_x^\ell f(x,\cdot)\|_{E_0} \lesssim (1+|x|)^{-\kappa}.$$
\end{assumption}

\section{Construction and convergence of \texorpdfstring{$\mathbf{W}^\epsilon$}{��ᵋ}}

We in this section justify Step \textbf{B} in the proof outline~\ref{sec:proof-strat} by carefully 
constructing the aforementioned rough paths $\mathbf{U}^\epsilon$ and $\mathbf{W}^\epsilon$ and showing 
their convergence.

\subsection{Construction of the rough paths}\label{sec:construction-rp}
Let us recall
\begin{equ}
  \begin{aligned}
    V_{s,t}^\epsilon := x \mapsto V^{\eps}_{s,t}(x)= \epsilon^{\f 12-H}\int_s^t f(x, y^\epsilon_r) 
      - \bar f(x) \dd B^H_r, \quad x\in \mathbb{R}^d,
  \end{aligned}\end{equ}
and
\begin{equ}
  U^\epsilon_t(x) = \int_0^t f(x, y^\epsilon_r) \dd B^H_r, \qquad U_t(x) 
    = \int_0^t \bar{f}(x) \dd B^H_r = \bar f(x) B_t^H.
\end{equ}
We start by defining the second order process $\mathbb{V}^\epsilon$. As in \cite{Hairer-Li:22}, 
the L\'evy area $\mathbb{V}^\epsilon$ of $V^\epsilon$ can be constructed by a limit of smooth 
approximations. In particular, fixing a Schwarz function $\phi$ which integrates to 1, we denote
$\phi_\delta(t) = \frac{1}{\delta}\phi\left(\frac{t}{\delta}\right)$. Then,
defining
$$V^{\epsilon, \delta}_t(x) = \epsilon^{\frac{1}{2} - H}
  \int_0^t (f(x, y_{\frac{r}{\epsilon}}) - \bar f(x)) \dot{B}^\delta_r \dd r, \
  \mathbb{V}^{\epsilon, \delta}_{s, t}(x, x') = \int_s^t \delta V^{\epsilon, \delta}_{s, r}(x)
  \dd V^{\epsilon, \delta}_r(x')$$
where $B^\delta = B^H * \phi_\delta$, \cite[Proposition 3.1]{Hairer-Li:22} shows that
$(V^{\epsilon, \delta}, \mathbb{V}^{\epsilon, \delta})$ converges in probability in 
$\CC^{\f 12 -}([0,T], C^3_b(\mathbb{R}^d,\mathbb{R}^d))$ to the random rough path
$\mathbf{V}^\epsilon= (V^\epsilon, \mathbb{V}^\epsilon)$. This is summarized in the following proposition.

\begin{proposition}\label{prop:def-Veps}
  Let $\CB = C_b^3(\R^d,\R^d)$.
  Let $f$ satisfy Condition~\ref{ass:C3}, and let $\alpha \in (\f13,\f12)$. Then, for any $\eps > 0$, 
  there exists a geometric rough path $\mathbf V^\eps = (V^\eps,\V^\eps)$ in
  $\mathcal{C}^\alpha([0,T], \CB)$.  In particular, $\mathbf V^\eps$ satisfies the algebraic 
  (weakly geometric) relation
  \begin{equ}[e:weakgeo]
    V_{s,t}^\epsilon \otimes V^\epsilon_{s,t} = \V^\epsilon_{s,t} +  (\V^\epsilon_{s,t})^\top\;,
  \end{equ}
  and, for every $x$, the identity \eqref{e:v-rp}
  holds almost surely.

  Here $(\cdot)^\top \colon \CB \otimes \CB \to \CB \otimes \CB$ denotes the transposition map that 
  swaps the tensor factors, extended in the natural way to $\CB_2= \CC_b^3(\R^{2d},(\R^{d})^{\otimes 2})$.
\end{proposition}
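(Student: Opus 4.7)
The plan is to construct $\mathbf V^\epsilon$ as a limit in probability of canonical smooth rough-path lifts. Fix a Schwartz mollifier $\phi$ with $\int \phi = 1$, set $\phi_\delta(\cdot) := \delta^{-1}\phi(\cdot/\delta)$ and $B^\delta := B^H * \phi_\delta$. Since $B^\delta$ is smooth, the pointwise formulas
\begin{equ}
V^{\epsilon,\delta}_t(x) := \epsilon^{\frac{1}{2}-H}\!\int_0^t (f(x, y_{r/\epsilon}) - \bar f(x))\, \dot B^\delta_r\, \dd r,
\qquad
\mathbb V^{\epsilon,\delta}_{s,t}(x, x') := \int_s^t V^{\epsilon,\delta}_{s,r}(x) \otimes \dd V^{\epsilon,\delta}_r(x')
\end{equ}
yield the canonical smooth rough-path lift of a $\mathcal B$-valued path. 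In particular, Chen's relation and the weakly geometric identity \eqref{e:weakgeo} hold automatically for every $\delta > 0$.

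The core task is to show that $\{(V^{\epsilon,\delta}, \mathbb V^{\epsilon,\delta})\}_{\delta>0}$ is Cauchy in probability in $\mathcal{C}^\alpha([0,T]; \mathcal B)$ as $\delta \to 0$. This is essentially the content of \cite[Proposition 3.1]{Hairer-Li:22}, once one observes that the centered integrand $f(x,\cdot) - \bar f(x)$ has vanishing $\mu$-average and still satisfies Assumption~\ref{ass:C3} (since $\bar f(x) = \int f(x,y)\,\mu(\dd y)$ is as smooth in $x$ as $f$, with derivatives controlled by those of $f$ uniformly in $y$). I would invoke that proposition to obtain a limit $\mathbf V^\epsilon = (V^\epsilon, \mathbb V^\epsilon) \in \mathcal C^\alpha([0,T]; \mathcal B)$. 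The weakly geometric identity \eqref{e:weakgeo} then passes to the limit because both sides are continuous in the rough-path topology, and being the limit of canonical smooth lifts, $\mathbf V^\epsilon$ is geometric in the sense of the definition given earlier.

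For the pointwise identification \eqref{e:v-rp}, I would fix $x \in \mathbb R^d$ and note that, for any $\alpha' < H$, the map $r \mapsto f(x, y_{r/\epsilon}) - \bar f(x)$ is almost surely $\alpha'$-Hölder on $[0,T]$ by Assumption~\ref{ass:C3} combined with the standard Hölder regularity of the Markov process $y$. Since $\alpha' + H > 1$, the Young integral on the right-hand side of \eqref{e:v-rp} exists and coincides with the almost-sure $\delta \to 0$ limit of $V^{\epsilon,\delta}_{s,t}(x)$ (a standard Wong--Zakai-type stability of Young integration in the driver). By uniqueness of limits in probability, $V^\epsilon_{s,t}(x)$ coincides almost surely with the stated expression.

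The main obstacle, handled in \cite{Hairer-Li:22}, is uniform-in-$\delta$ control of the moments of $\mathbb V^{\epsilon,\delta}_{s,t}$ in the $C^3_b$ norm. This combines three ingredients: hypercontractivity in the first and second Wiener chaos of $B^H$ (giving equivalence of $L^p$ and $L^2$ moments), cumulant/residue estimates that exploit the spectral gap of $\mathcal L$ from Assumption~\ref{ass:ergodic} to beat the prefactor $\epsilon^{1-2H}$, and differentiation in $x$ up to order four together with the polynomial decay $(1+|x|)^{-\kappa}$ of Assumption~\ref{ass:C3}, so that a Kolmogorov criterion can be applied jointly in $t$ and $x$. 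Once these bounds are in hand, Chen's relation, the weakly geometric identity, and the pointwise identification \eqref{e:v-rp} are inherited from the smooth approximations for free.
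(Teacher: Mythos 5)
Your construction is essentially identical to the paper's: mollify $B^H$ with a Schwartz kernel, take the canonical smooth lifts $(V^{\epsilon,\delta},\mathbb V^{\epsilon,\delta})$, and invoke \cite[Proposition 3.1]{Hairer-Li:22} for convergence in probability in $\mathcal{C}^\alpha([0,T];\mathcal B)$, with Chen's relation, the weakly geometric identity and the pointwise identification inherited in the limit. One minor caveat: your justification of \eqref{e:v-rp} via Young integration presumes almost-sure time-H\"older regularity of $r\mapsto f(x,y_{r/\epsilon})$, which is not part of Assumptions~\ref{ass:ergodic} and \ref{ass:C3} (the integral is constructed as the mollification limit, e.g.\ via stochastic sewing, rather than as a pathwise Young integral), but the pointwise identification is in any case contained in the cited proposition.
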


Thus, it remains for us to specify the rough path lift of $U^\epsilon$ alongside its cross terms with 
$V^\epsilon$. We start by defining the lift of $U^\epsilon$ as follows
\begin{equation}\label{eq:lift-u-eps}
  \mathbb{U}^\epsilon_{s, t} := \mathbb{U}_{s,t} + \epsilon^{H- \frac{1}{2}}
  \left( \int_{s}^{t} V^\epsilon_{s,r} \otimes \dd U_r + \int_{s}^{t} U_{s,r} \otimes \dd V^\epsilon_r \right)
  + \epsilon^{2H-1}\mathbb{V}^\epsilon_{s,t},
\end{equation}
where $\mathbb{U}_{s, t} = \int_s^t U_{s, r} \otimes \dd U_r$ and the mixed integrals are defined as 
Young integrals.  This definition is natural as it follows from the observation that 
$U_{s,t}^\epsilon = U_{s, t} +\epsilon^{H-\f 12} V_{s,t}^\epsilon.$
With this in mind, we denote by
$\mathbf W^\epsilon$ the rough path over
$W^\epsilon := (V^\epsilon, U^\epsilon)$ with
the L\'evy area $\mathbb{W}^\epsilon$ of $W^\epsilon$ defined by
\begin{equation}\label{eq:W-epsilon}
  \mathbb{W}^\epsilon_{s, t} :=
  \begin{pmatrix}
    \mathbb{V}^\epsilon_{s, t}                          & \int_s^t V^\epsilon_{s, r} \otimes \dd U^\epsilon_r \\
    \int_s^t U^\epsilon_{s, r} \otimes \dd V^\epsilon_r & \mathbb{U}^\epsilon_{s, t}
  \end{pmatrix},
\end{equation}
where the cross terms are
\begin{align*}
  \int_s^t V^\epsilon_{s, r} \otimes \dd U^\epsilon_r & := \int_{s}^{t} V^\epsilon_{s,r} \otimes \dd U_r
  +\epsilon^{H- \frac{1}{2}}\mathbb{V}^\epsilon_{s,t};                                                     \\
  \int_s^t U^\epsilon_{s, r} \otimes \dd V^\epsilon_r & := \int_{s}^{t} U_{s,r} \otimes \dd V^\epsilon_r +
  \epsilon^{H- \frac{1}{2}} \mathbb{V}^\epsilon_{s,t}.
\end{align*}
Similarly, denoting $\bar W^\epsilon := (V^\epsilon, U)$, we set
\begin{equation}\label{eq:W-eps-bar-def}
  \bar{\mathbb{W}}^\epsilon_{s, t}:=
  \begin{pmatrix}
    \mathbb{V}^\epsilon_{s, t}                   & \int_s^t V^\epsilon_{s, r} \otimes \,\dd U_r \\
    \int_s^t U_{s, r} \otimes \,\dd V^\epsilon_r & \mathbb{U}_{s, t}
  \end{pmatrix}.
\end{equation}
and $\bar{\mathbf{W}}^\epsilon := (\bar{W}^\epsilon, \bar{\mathbb{W}}^\epsilon)$ for the rough path 
over $\bar{W}^\epsilon$.

It is straightforward to verify that $\mathbb{W}^\epsilon$ and $\bar{\mathbb{W}}^\epsilon$ defined 
above are \textit{bona fide} geometric rough paths lifts of the corresponding paths $\bar W^\epsilon$ 
and $W^\epsilon$. Since  $\VV^\epsilon$ is already a rough path in $\CC^\alpha([0,T];\CB)$, the 
required regularity of the cross terms follows from Young’s inequality.  Moreover, Chen’s relation 
holds for $(\bar W^\epsilon, \bar{\mathbb{W}}^\epsilon)$ which we justify below.

Firstly, Chen's relation holds in the Young regime. Namely, for any $X \in C^\alpha, Y \in C^\beta$ 
with $\alpha + \beta > 1$, we have
\begin{align}\label{eq:chens}
  \delta \left(\int_{s}^{t} X_{s,r} \otimes \dd Y_r \right)_{s,u,t} & = \int_{s}^{t} X_{s,r} \otimes \dd Y_r -
  \int_{s}^{u} X_{s,r} \otimes \dd Y_r - \int_{u}^{t} X_{u,r} \otimes \dd Y_r                                                                       \\
                                                                    & = \int_{u}^{t} X_{s,r} \otimes \dd Y_r - \int_{u}^{t} X_{u,r} \otimes \dd Y_r \\
                                                                    & = \int_{u}^{t} (X_{s,r} - X_{u,r}) \otimes \dd Y_r
  = \int_{u}^{t} X_{s,u} \otimes \dd Y_r = X_{s,u} \otimes Y_{u,t}.
\end{align}
Together with linearity of the coboundary operator $\delta$ we obtain
\begin{align*}
  \delta \mathbb{U}^\epsilon_{s, u, t} & = \delta \mathbb{U}_{s, u, t}
  + \epsilon^{H - \frac{1}{2}}\delta\left({\int_{s}^{t} V^\epsilon_{s,r}} \otimes \dd U_r
  + \int_{s}^{t} U_{s, r} \otimes \dd V^\epsilon_r \right)_{s, u, t}
  + \epsilon^{2H-1}\delta\mathbb{V}^\epsilon_{s, u, t}                                                                                \\[0.2em]
                                       & = U_{s,u} \otimes U_{u,t} + \epsilon^{H - \frac{1}{2}}\left(V^\epsilon_{s,u} \otimes U_{u,t}
  + U_{s,u} \otimes V^\epsilon_{u,t}\right) + \epsilon^{2H-1} V^\epsilon_{s,u} \otimes V^\epsilon_{u,t}
  \\[1em]
                                       & = U^\epsilon_{s,u} \otimes U^\epsilon_{u,t}
\end{align*}
where we applied \eqref{eq:chens} in the second equality, and the last equality follows from
$V^\epsilon = \epsilon^{\frac{1}{2} - H} (U^\epsilon - U)$.

Analogous computations apply to the cross terms, which allows us to conclude that 
$(W^\epsilon, \mathbb{W}^\epsilon)$ also satisfies Chen's relation. Finally, the lifts are 
geometric as each component is geometric.

\subsection{Convergence of the rough paths}\label{sec:V-epsilon-cvg}
We recall a useful result from~\cite{Hairer-Li:22} concerning the convergence of 
$(V^\epsilon, \mathbb{V}^\epsilon)$ which will be fundamental for the next section.

\begin{proposition}\cite[Proposition 5.1.]{Hairer-Li:22}\label{prop:V-epsilon-conv}
  Let $f$ satisfy Condition~\ref{ass:C3} and assume $y_t$ satisfies condition~\ref{ass:ergodic}.
  Then the rough path $\mathbf{V}^\epsilon = (V^\epsilon, \mathbb{V}^\epsilon)$ converges in law
  to the Gaussian rough path $\mathbf{V} = (V, \mathbb{V})$, whose path component has
  covariance
  \begin{equation*}
    \mathbb{E}[V_{s, t}(x) \otimes V_{u, v}(\bar x)]
    = |[s, t] \cap [u, v]|(\Sigma(x, \bar x) + \Sigma(\bar x, x)^{\top})
  \end{equation*}
  where $\Sigma$ is explicitly defined by \eqref{e:defSigma}. Moreover, the lift $\mathbb{V}$
  satisfies
  $$\mathbb{V}_{s, t}(x, \bar x) = \int_s^t V_{s, r}(x) \otimes \dd V_{r}(\bar x) + (t - s) \Sigma(x, \bar x)$$
  where the integral is understood in the It\^o sense.
\end{proposition}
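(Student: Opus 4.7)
The plan is to prove convergence in law of $\mathbf V^\epsilon = (V^\epsilon, \V^\epsilon)$ in $\cC^\alpha([0,T];\CB)$ via the classical programme of tightness combined with identification of finite-dimensional distributions. Set $g_x(y) := f(x,y) - \bar f(x)$, a centred observable to which the spectral gap of Assumption~\ref{ass:ergodic} applies and which will drive every quantitative estimate below.

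For the first-level component, I would work point-wise at a finite collection of spatial points and use cumulants. Since $B^H$ is independent of $y$, conditioning on $y$ makes each increment $V^\epsilon_{s,t}(x)$ a centred Gaussian, and the unconditional covariance between $V^\epsilon_{s,t}(x)$ and $V^\epsilon_{u,v}(\bar x)$ reduces to
\begin{equation*}
\epsilon^{1-2H}\, H(2H-1) \int_s^t \!\!\int_u^v \E[g_x(y_{r/\epsilon}) \otimes g_{\bar x}(y_{r'/\epsilon})] \,|r-r'|^{2H-2}\,\dd r\,\dd r'.
\end{equation*}
The change of variables $\tau = (r-r')/\epsilon$ together with the exponential contraction from Assumption~\ref{ass:ergodic} localises the inner integral and produces an $\epsilon$-independent diagonal constant; the two orderings $r>r'$ and $r<r'$ contribute the two terms $\Sigma(x,\bar x)$ and $\Sigma(\bar x,x)^\top$ after matching with the fractional-integral representation of $\CL^{1-2H}$. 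Joint Gaussianity of the limit then follows by showing that cumulants of order $k\ge 3$ vanish: each such cumulant decomposes into iterated integrals whose kernels are products of $|r_i-r_j|^{2H-2}$ and joint mixing coefficients of $g_{x_i}$ at times $r_i/\epsilon$, and mixing forces every time difference to be of order $\epsilon$, leaving a strictly positive residual power of $\epsilon$ whenever $k\ge 3$.

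For the second-level component, I would use the mollification $B^\delta$ introduced just before the proposition, which already realises $\V^\epsilon$ as a geometric lift for each $\epsilon > 0$. To identify the $\epsilon\to 0$ limit, decompose $\V^\epsilon_{s,t}(x,\bar x)$ into its mean and its centred second-chaos parts. The mean part is a double integral of the covariance kernel and, by exactly the same localisation argument as above, converges to $(t-s)\,\Sigma(x,\bar x)$, which is precisely the correction recorded in the statement. The centred second-chaos part converges to the It\^o integral $\int_s^t V_{s,r}(x) \otimes \dd V_r(\bar x)$: the candidate germ has $L^2$-modulus of order $|t-s|$ with conditionally vanishing $\delta$-increment, so a stochastic sewing argument identifies the limit within the second Wiener chaos of $V$.

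Finally, tightness in the function-valued rough-path space follows from Kolmogorov's criterion applied jointly in the time variable and in the spatial variable $x$: temporal moments come from the kernel analysis above uniformly in $\epsilon$, and spatial moments come from differentiating the kernels under the integral and invoking the $C^4$ bound in Assumption~\ref{ass:C3}. The main obstacle I anticipate is the combinatorial bookkeeping of the cumulant estimates for $\V^\epsilon$ uniformly in $x$: each spatial derivative multiplies the number of Wick-contraction graphs that must be controlled, and one must verify graph by graph that the residual $\epsilon$-exponent obtained after matching $\epsilon^{1-2H}$ against the diagonal localisation of the fractional kernel is non-negative, with strict positivity outside the pairwise covariance contractions. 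Assumption~\ref{ass:ergodic} is precisely the input that makes this bookkeeping close.
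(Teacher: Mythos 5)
You should be aware that the paper does not prove this proposition at all: it is imported verbatim from \cite[Proposition 5.1]{Hairer-Li:22}, and the present paper only records the statement for later use. So there is no internal proof to compare against. That said, your sketch is essentially a reconstruction of the strategy of the cited proof, and of the machinery this paper itself re-deploys in Section~\ref{sec:cumulant}: conditioning on $y$ and Wick-contracting the fractional increments to reduce covariances and higher cumulants to integrals of $\prod|r_i-r_j|^{2H-2}$ against mixing coefficients of the centred observable $g_x$; rescaling $\tau=(r-r')/\epsilon$ so that the spectral gap of Assumption~\ref{ass:ergodic} localises the diagonal and cancels the prefactor $\epsilon^{1-2H}$ (with the two orderings producing $\Sigma(x,\bar x)$ and $\Sigma(\bar x,x)^{\top}$); killing cumulants of order $\ge 3$ through connected-graph estimates; building $\V^\epsilon$ by mollification and splitting it into a mean part converging to $(t-s)\Sigma(x,\bar x)$ and a centred second-chaos part converging to the It\^o integral. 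All of this is faithful to the actual argument, and your covariance computation correctly reproduces \eqref{e:defSigma} up to the constant bookkeeping $\tfrac12\Gamma(2H+1)=H(2H-1)\Gamma(2H-1)$.

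As a proof, however, it remains an outline precisely at the points where the real work lies. The claim that mixing ``forces every time difference to be of order $\epsilon$, leaving a strictly positive residual power of $\epsilon$ whenever $k\ge3$'' is the conclusion of a nontrivial graph-by-graph iteration (compare Lemmas~\ref{lem:iterate} and~\ref{lem:GD-kappa} of this paper, which sharpen the corresponding estimates of \cite{Hairer-Li:22} and occupy an appendix); the tightness of the second-level process in $\mathcal{C}^{2\alpha}$ uniformly in $\epsilon$ requires fourth-moment (second-chaos) bounds on $\V^{\epsilon,\delta}$ that do not follow from the first-level kernel analysis alone; and the stochastic-sewing identification of the It\^o integral needs the joint convergence of the germ with $V^\epsilon$, not just convergence of each. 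You flag these as anticipated obstacles rather than resolving them, which is the honest thing to do, but it means the proposal establishes the statement only modulo the hardest estimates of \cite{Hairer-Li:22}. For the purposes of this paper, the correct move is simply to cite, as the authors do.
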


\begin{remark}
  We remark that the convergence of
  $(V^\epsilon, \mathbb{V}^\epsilon)$ directly yields an averaging result for~\eqref{eq:x-epsilon} 
  as the weak convergence of $(V^\epsilon, \mathbb{V}^\epsilon)$ implies that $\mathbf{U}^\epsilon 
  = (U^\epsilon, \mathbb{U}^\epsilon)$ converges in probability to $\mathbf{U} = (U, \mathbb{U})$.
  To see this, we note that $V^\epsilon = \epsilon^{\frac{1}{2}-H} \left(U^\epsilon -U\right)$, so 
  for $H>\frac{1}{2}$ convergence in distribution of $V^\epsilon$ necessarily implies the convergence 
  in distribution of $U^\epsilon-U$ to zero, which implies the convergence in probability over 
  $C^\alpha$ for some $\alpha<\frac{1}{2}$. Further, inspecting the relation \eqref{eq:lift-u-eps}, 
  we notice 
  $$\left| \mathbb{U}^\epsilon_{s, t} - \mathbb{U}_{s,t} \right| \leq  \epsilon^{H- \frac{1}{2}}
    \left| \int_{s}^{t} V^\epsilon_{s,r} \otimes \dd U_r + \int_{s}^{t} U_{s,r} \otimes \dd V^\epsilon_r \right|
    + \epsilon^{2H-1} \left| \mathbb{V}^\epsilon_{s,t} \right|.$$
  An application of Young's inequality, and the convergence of $\mathbb{V}^\epsilon$ in law then 
  implies that the convergence in law to zero over $C^{2\alpha}$ of $\mathbb{U}^\epsilon-\mathbb{U}$, 
  which as above implies convergence in probability. Both $x^\epsilon$ and $\bar x$ can be viewed as 
  solutions to the system of RDEs
  $$\dd x^\epsilon = \delta(x^\epsilon) \dd \mathbf{U}^\epsilon,\qquad \dd \bar x = \delta(x) \dd \mathbf{U},$$
  where  $\delta \colon \R^d \to \mathbb{L}(\CB,\R^d)$ is the function given by $\delta(x)(f) = f(x)$.
  It follows from the continuity of the solution map that $x^\epsilon$ converges in probability to 
  $\bar x$ in the H\"older norm with $\alpha$-exponent $\alpha< 1/2$. Note that in \cite{Hairer-Li:20} 
  the convergence is over $C^{H-}$ and quantitative.
\end{remark}

We now establish the convergence of the rough path $\mathbf{W}^\epsilon := (W^\epsilon, \mathbb{W}^\epsilon)$ 
as $\epsilon \to 0$. Note that even though the convergence of $V^\epsilon$ has been established, the convergence of their lifts $(\mathbb V^\epsilon, \mathbb U)$ is not 
automatic and requires additional justification. The simple example $(W, (-1)^nW)$ with $W$ a Brownian motion already illustrates this issue. Namely, the corresponding stochastic integral
$$\int_0^t W_r\dd (-1)^n W_r = \frac{(-1)^n}{2} (W_t^2 - t)$$
does not converge weakly as $n\to \infty$ despite the individual components do. Consequently, for a rough integral to converge, one needs joint weak continuity of the driving rough paths.

For $p \ge 1$, we denote the $p$-Wasserstein distances on $C^\alpha$ and $\CC^\alpha$ by
\begin{equ}\label{eq:wasserstein}
  \begin{aligned}
    \mathcal W^p_\alpha(\mu_1, \mu_2)=\inf_{X_1\sim \mu_1, X_2\sim \mu_2} \|\|X_1-X_2\|_{C^\alpha}\|_p, \\
    \mathcal W^p_{\CC^\alpha}(\mathbf \nu_1,\mathbf \nu_2)=\inf_{\mathbf X_1\sim \nu_1, \mathbf X_2\sim \nu_2} \| \|\mathbf X_1-\mathbf X_2\|_{\CC^\alpha}\|_p.
  \end{aligned}
\end{equ}
\begin{theorem}\label{thm:VU-eps-conv}
  Assume conditions~\ref{ass:ergodic} and \ref{ass:C3} and let $\mathbf{W}^\epsilon := 
  (W^\epsilon, \mathbb{W}^\epsilon)$ be as defined in \eqref{eq:W-epsilon}.
  There exists a mean zero Gaussian rough path $\mathbf{W} = (W, \mathbb{W})$ such that
  $$\lim_{\epsilon \to 0}\mathcal W_{\CC^\alpha}^p(\mathbf{W}^\epsilon, \mathbf{W}) = 0$$
  for any $p \ge 1$ and $\alpha \in \left(1 - H \wedge \f 13, \f 12\right)$.

  In particular, $W=(W^1, W^2)$ is a Gaussian process on $C^\alpha([0, T], C^3_b(\mathbb{R}^d, \mathbb{R}^d)^2)$ 
  with independent components. Moreover, $W^1$ having the same law as $V$ in Proposition~\ref{prop:V-epsilon-conv} 
  and $W^2 $ having the same distribution as $U$.
\end{theorem}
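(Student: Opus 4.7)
The plan is to combine the convergence of $\mathbf V^\epsilon$ already established in Proposition~\ref{prop:V-epsilon-conv} with the fact that $\mathbf U$ does not depend on $\epsilon$, handling the mixed entries of $\mathbb W^\epsilon$ through Young integration together with the explicit decomposition \eqref{eq:lift-u-eps}. The principal new input is the \emph{joint} convergence of $(\mathbf V^\epsilon, B^H)$, which is not automatic, since both objects are built from the same fractional Brownian motion and are therefore strongly correlated at every finite $\epsilon$.

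First I would upgrade Proposition~\ref{prop:V-epsilon-conv} from convergence in law to convergence in Wasserstein distance $\mathcal W_{\CC^\alpha}^p$ for every $p \ge 1$. The required uniform-in-$\epsilon$ bounds $\sup_\epsilon \mathbb E \|\mathbf V^\epsilon\|_{\CC^\alpha}^p < \infty$ follow from the Kolmogorov-type estimates already used for Proposition~\ref{prop:V-epsilon-conv} in~\cite{Hairer-Li:22}, combined with equivalence of moments in a fixed (conditional on $y$) Wiener chaos. Since $\mathbf U$ is $\epsilon$-independent, this also takes care of the diagonal blocks of $\mathbb W^\epsilon$ other than $\mathbb U^\epsilon$.

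The hard step, and the main obstacle, is to prove that any subsequential limit of $(\mathbf V^\epsilon, B^H)$ is of the form $(\mathbf V, B^H)$ with $\mathbf V$ \emph{independent} of $B^H$. Naive tightness only yields a limit whose marginals may be coupled. To rule this out I would analyze joint cumulants of linear functionals of $V^\epsilon$ and of $B^H$. Conditioning on the trajectory of $y$, the process $V^\epsilon$ becomes a Wiener integral against $B^H$ whose deterministic integrand $f(x,y_{\cdot/\epsilon})-\bar f(x)$ has zero mean under $\mu$; consequently every joint cumulant mixing $V^\epsilon$-entries with $B^H$-entries can be written as an integral of a stationary-averaged kernel which, by Assumption~\ref{ass:ergodic} combined with the scaling $\epsilon^{1/2-H}$, vanishes as $\epsilon\to 0$. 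The pure $V^\epsilon$-cumulants converge to the Gaussian covariances dictated by $\Sigma$ (this is essentially the content of Proposition~\ref{prop:V-epsilon-conv}), while the pure $B^H$-cumulants are fixed. Assembling these identifies the limit as the pair $(\mathbf V, B^H)$ with $\mathbf V$ Gaussian and independent of $B^H$, hence of $U$ and $\mathbb U$.

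Finally I would assemble $\mathbb W^\epsilon$. The off-diagonal Young integrals $\int V^\epsilon\otimes \dd U$ and $\int U\otimes \dd V^\epsilon$ have combined Hölder regularity $\alpha+H>1$, so by continuity of the Young integration map, Step~2 implies their joint convergence to $\int V\otimes \dd U$ and $\int U\otimes \dd V$. The residual $\epsilon^{H-1/2}\V^\epsilon$ corrections in the cross blocks, as well as the two correction terms $\epsilon^{H-1/2}(\cdot) + \epsilon^{2H-1}\V^\epsilon$ appearing in $\mathbb U^\epsilon-\mathbb U$ from \eqref{eq:lift-u-eps}, all vanish in $L^pC^{2\alpha}$ by Young's inequality and the uniform moment bounds of Step~1, since $H>\tfrac12$ makes every power of $\epsilon$ strictly positive. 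Passing to the limit yields the claimed Wasserstein convergence toward a rough path $\mathbf W$ lifting $W=(V,U)$ with independent components, with $\mathbb W$ determined by the natural block expressions of $\mathbb V$, $\mathbb U$, and the Young cross terms $\int V\otimes \dd U$ and $\int U\otimes \dd V$.
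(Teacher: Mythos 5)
Your overall architecture is the same as the paper's: reduce to the pair $(V^\epsilon,U)$ via the identity $U^\epsilon=U+\epsilon^{H-\f12}V^\epsilon$ (Lemma~\ref{lem:wasserstein-bar-to-nobar}), prove joint Gaussian convergence of the path components by a joint-cumulant computation (Section~\ref{sec:cumulant}), identify independence of the limit components from the vanishing of the cross-covariance using independence of $B^H$ and $y$ (Lemmas~\ref{lem:correlation} and~\ref{lem:law-of-W}), and then pass the L\'evy area to the limit by continuity of Young integration given joint Wasserstein convergence of the paths (Lemma~\ref{lem:lift-conv}), with the $\epsilon^{H-\f12}$ and $\epsilon^{2H-1}$ corrections in \eqref{eq:lift-u-eps} absorbed because $H>\f12$. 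Your Step~1 (uniform moments upgrading weak to Wasserstein convergence) and Step~3 are exactly the paper's Lemmas~\ref{lem:wasserstein-bar-to-nobar} and~\ref{lem:lift-conv}.

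The gap is in your Step~2, which is precisely where the work lies. Your stated reason for the vanishing of the mixed higher-order cumulants --- that conditioning on $y$ produces ``a stationary-averaged kernel which, by Assumption~\ref{ass:ergodic} combined with the scaling $\epsilon^{1/2-H}$, vanishes'' --- does not apply to the blocks of the cumulant expansion coming from the $B^H$- (equivalently $U$-) entries. In the decomposition \eqref{eq:cumulant-decomp}, each $U$-entry contributes a \emph{singleton} block of the partition $\Delta$, whose cumulant is the constant $\lambda\epsilon^{H-\f12}\bar f_{i_a}(x_a)$ (see \eqref{eq:cumulant-barX}): it carries no ergodic decay whatsoever, only a small prefactor $\epsilon^{H-\f12}$, while the corresponding pairing variable still contributes the non-integrable-at-infinity kernel $|r_a-r_b|^{2H-2}$ over a domain of size $\epsilon^{-1}$. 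Whether the net power of $\epsilon$ is positive is exactly the content of the iteration in Lemma~\ref{lem:iterate} (each singleton removal costs $\epsilon^{1-2H}$ against the gain $\epsilon^{H-\f12+\f12\cdot 2}$) together with the base-case estimate of Lemma~\ref{lem:GD-kappa}; the cases (i)--(iii) in Lemma~\ref{lem:cumulant-conv} show the exponents are strictly positive but only after this careful bookkeeping. Also note a small inaccuracy: at finite $\epsilon$ the pair $(V^\epsilon,U)$ is already exactly uncorrelated (the cross-covariance vanishes identically by the zero-mean property of $f-\bar f$ under $\mu$), not ``strongly correlated''; the issue is that uncorrelated does not imply asymptotic independence without the full higher-cumulant analysis.
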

The proof of this theorem proceeds through a sequence of intermediate results, which we connect step 
by step. Firstly, by observing that
$$W^\epsilon=(V^\epsilon, U+\epsilon^{H-\f 12} V^\epsilon) = \bar W^\epsilon + (0, \epsilon^{H-\f 12} V^\epsilon)$$
we first reduce the problem to showing the convergence of $\bar{\mathbf{W}}^\epsilon$. To be more 
precise, we have the following.
\begin{lemma}\label{lem:wasserstein-bar-to-nobar} For any $\alpha\in (0,1)$, if
  $\lim_{\epsilon\to 0} {\mathcal W}_{\CC^\alpha}^p(\bar {\mathbf{W}}^\epsilon, \mathbf{W})=0$,
  then  $$\hspace{1.2em} \lim_{\epsilon \to 0}\mathcal W_{\CC^\alpha}^p({\mathbf{W}}^\epsilon, \mathbf{W})=0.$$
\end{lemma}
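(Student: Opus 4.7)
The plan is to use the triangle inequality for the Wasserstein distance and then exploit the fact that both $\mathbf W^\epsilon$ and $\bar{\mathbf W}^\epsilon$ are canonically defined on the same underlying probability space, so that a trivial coupling is admissible. Concretely, I would write
\begin{equation*}
  \mathcal W_{\CC^\alpha}^p(\mathbf W^\epsilon,\mathbf W)
  \le \mathcal W_{\CC^\alpha}^p(\mathbf W^\epsilon,\bar{\mathbf W}^\epsilon)
  + \mathcal W_{\CC^\alpha}^p(\bar{\mathbf W}^\epsilon,\mathbf W),
\end{equation*}
where the second term vanishes by hypothesis. Using the trivial coupling for the first term reduces the problem to estimating $\|\mathbf W^\epsilon-\bar{\mathbf W}^\epsilon\|_{\CC^\alpha}$ in $L^p$, which I would then show tends to $0$ as $\epsilon\to0$, using only the scaling factor $\epsilon^{H-1/2}$ together with uniform $L^p$-bounds on $V^\epsilon$ and $\mathbb V^\epsilon$.

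\textbf{Level one.} The difference of the path components is simply $W^\epsilon-\bar W^\epsilon=(0,\epsilon^{H-1/2}V^\epsilon)$, so
\begin{equation*}
  \|W^\epsilon-\bar W^\epsilon\|_{C^\alpha}
  = \epsilon^{H-1/2}\,\|V^\epsilon\|_{C^\alpha}.
\end{equation*}
Since $H>\tfrac12$ and since Proposition~\ref{prop:V-epsilon-conv} (or the tail of the construction in \cite{Hairer-Li:22}) yields a uniform bound $\sup_\epsilon\|V^\epsilon\|_{L^p C^\alpha}<\infty$ for any $p\ge1$, this first contribution vanishes in $L^p$ at rate $\epsilon^{H-1/2}$.

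\textbf{Level two.} From the definitions of $\mathbb W^\epsilon$ and $\bar{\mathbb W}^\epsilon$, the three non-trivial entries of the difference $\mathbb W^\epsilon-\bar{\mathbb W}^\epsilon$ are
\begin{align*}
  \left(\mathbb W^\epsilon-\bar{\mathbb W}^\epsilon\right)_{12} &= \epsilon^{H-1/2}\mathbb V^\epsilon_{s,t},\\
  \left(\mathbb W^\epsilon-\bar{\mathbb W}^\epsilon\right)_{21} &= \epsilon^{H-1/2}\mathbb V^\epsilon_{s,t},\\
  \left(\mathbb W^\epsilon-\bar{\mathbb W}^\epsilon\right)_{22} &= \epsilon^{H-1/2}\Bigl(\int_s^t V^\epsilon_{s,r}\otimes\dd U_r+\int_s^t U_{s,r}\otimes\dd V^\epsilon_r\Bigr) + \epsilon^{2H-1}\mathbb V^\epsilon_{s,t},
\end{align*}
all of which carry a strictly positive power of $\epsilon$. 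Bounding the cross Young integrals by Young's inequality in terms of $\|V^\epsilon\|_{C^\alpha}\|U\|_{C^\alpha}$ and using the aforementioned uniform $L^p$-bounds on $\|V^\epsilon\|_{C^\alpha}$, $\|\mathbb V^\epsilon\|_{C^{2\alpha}}$ together with the fact that $U=\bar f(\cdot)B^H$ has finite $L^p C^\alpha$-moments, I get
\begin{equation*}
  \|\mathbb W^\epsilon-\bar{\mathbb W}^\epsilon\|_{L^p C^{2\alpha}} \lesssim \epsilon^{H-1/2},
\end{equation*}
which vanishes as $\epsilon\to0$.

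\textbf{Conclusion and main obstacle.} Combining the two levels gives $\|\mathbf W^\epsilon-\bar{\mathbf W}^\epsilon\|_{L^p\CC^\alpha}\to0$, and hence $\mathcal W_{\CC^\alpha}^p(\mathbf W^\epsilon,\bar{\mathbf W}^\epsilon)\to0$, which together with the hypothesis closes the argument. The only delicate point is to justify the uniform-in-$\epsilon$ moment bound $\sup_\epsilon\|\mathbf V^\epsilon\|_{L^p\CC^\alpha}<\infty$, which is however already built into the construction of $\mathbf V^\epsilon$ in \cite{Hairer-Li:22} via the smooth approximations $\mathbf V^{\epsilon,\delta}$ and standard Gaussian hypercontractivity.
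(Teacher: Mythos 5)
Your proof is correct, but it takes a genuinely different route from the paper. You argue via the triangle inequality plus the trivial (identity) coupling, reducing everything to the pathwise estimate $\rho_\alpha(\mathbf W^\epsilon,\bar{\mathbf W}^\epsilon)\lesssim \epsilon^{H-\frac12}\big(\|V^\epsilon\|_{C^\alpha}+\|\mathbb{V}^\epsilon\|_{C^{2\alpha}}+\|V^\epsilon\|_{C^\alpha}\|U\|_{C^\alpha}\big)$, which you then control in $L^p$ by the uniform moment bounds on $\mathbf V^\epsilon$ from \cite{Hairer-Li:22}. The paper instead first proves weak convergence of the path component by testing against bounded Lipschitz functions and exploiting the linear map $A_\epsilon=\begin{psmallmatrix}1&\epsilon^{H-1/2}\\0&1\end{psmallmatrix}$, then upgrades to $p$-Wasserstein convergence through uniform integrability (uniform $q$-th moments for $q>p$), and disposes of the lift by simply ``comparing the definitions'' \eqref{eq:W-epsilon} and \eqref{eq:W-eps-bar-def}. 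Your argument is more direct and quantitative — it yields the explicit rate $\mathcal W^p_{\CC^\alpha}(\mathbf W^\epsilon,\mathbf W)\le C\epsilon^{H-\frac12}+\mathcal W^p_{\CC^\alpha}(\bar{\mathbf W}^\epsilon,\mathbf W)$ — and it makes the second-level comparison explicit where the paper leaves it implicit. Two small points to tighten: (i) the triangle inequality for $\mathcal W^p_{\CC^\alpha}$ needs a gluing lemma, hence a Polish underlying space; this is available because the rough paths here are geometric and live in a separable (little-H\"older) subspace, but you can sidestep gluing altogether by observing that $\mathbf W^\epsilon=\Psi_\epsilon(\bar{\mathbf W}^\epsilon)$ for a deterministic measurable map $\Psi_\epsilon$, so any near-optimal coupling of $(\bar{\mathbf W}^\epsilon,\mathbf W)$ pushes forward to a coupling of $(\mathbf W^\epsilon,\mathbf W)$ and the estimate becomes pathwise; (ii) the $L^p$ bound on the product $\|V^\epsilon\|_{C^\alpha}\|U\|_{C^\alpha}$ requires H\"older's inequality with higher moments of each factor, which is harmless here since both are Gaussian-type with all moments, but should be said.
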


\begin{proof}
  Recalling the decomposition $U^\epsilon = U + \epsilon^{H - \frac{1}{2}}V^\epsilon$, we have
  $$(V^\epsilon, U^\epsilon) = (V^\epsilon, U)\,
    \begin{pmatrix}
      1 & \epsilon^{H - \frac{1}{2}} \\
      0 & 1
    \end{pmatrix} =: A_\epsilon\,(V^\epsilon, U).$$
  For any bounded Lipschitz function $\phi$, it follows that
  \begin{align*}
               & \left|\int \phi \dd \mathcal{L}(V^\epsilon, U^\epsilon) - \int \phi \dd \mathcal{L}(W)\right|
    = \left|\int \phi \circ A_\epsilon \dd \mathcal{L}(V^\epsilon, U) - \int \phi \dd \mathcal{L}(W)\right|    \\
    \le\       & \int |\phi \circ A_\epsilon - \phi| \dd \mathcal{L}(V^\epsilon, U) +
    \left|\int \phi \dd \mathcal{L}(V^\epsilon, U) - \int \phi \dd \mathcal{L}(W)\right|                       \\
    \lesssim\  & \epsilon^{H - \frac{1}{2}} \int \dd \mathcal{L}(V^\epsilon, U) +
    \left|\int \phi \dd \mathcal{L}(V^\epsilon, U) - \int \phi \dd \mathcal{L}(W)\right|.
  \end{align*}
  Since $ (V^\epsilon, U)\to W$  in the $p$-Wasserstein distance, the right-hand side converge to 
  $0$ as $\epsilon \to 0$, Thus, $(V^\epsilon, U^\epsilon)$ converges in law to~$W$.

  To conclude convergence in the $p$-Wasserstein distance, it remains to verify that 
  $(V^\epsilon, U^\epsilon)$ is $p$-uniformly integrable. For this, it suffices to establish 
  uniformly bounded $q$-th moments for any $q > p$. Indeed, we have
  $$\|(V^\epsilon, U^\epsilon)\|_{L^q C^\alpha}^q \lesssim \|A_\epsilon\|\|(V^\epsilon, U)\|_{L^q C^\alpha}^q
    \lesssim (1 + \epsilon^{H - \frac{1}{2}})\,\|(V^\epsilon, U)\|_{L^q C^\alpha}^q.$$
  Moreover, \cite[Lemma 4.2 and Lemma A.1]{Hairer-Li:22} provide uniform $L^p$-estimates for
  $\mathbf{V}^\epsilon = (V^\epsilon, \mathbb{V}^\epsilon)$ in the rough path norm for large $p$. Consequently,
  \begin{equation}\label{eq:V-eps-unif-bd}
    \sup_{\epsilon} \|(V^\epsilon, U)\|_{L^pC^\alpha} \lesssim \sup_{\epsilon} \|V^\epsilon\|_{L^pC^\alpha} \vee \|U\|_{L^pC^\alpha} < \infty.
  \end{equation}
  Thus, $\|(V^\epsilon, U^\epsilon)\|_{L^q C^\alpha}$ is uniformly bounded in $\epsilon$, as claimed.

  Finally, suppose that the lift  $\bar{\mathbb{W}}^\epsilon$ of $(V^\epsilon, U)$ converges to $\W$. 
  By construction, and by comparing the definitions in \eqref{eq:W-eps-bar-def} and \eqref{eq:W-epsilon}, 
  we deduce that the lift $\mathbb{W}^\epsilon$ of $(V^\epsilon, U^\epsilon)$ also converges to  $\W$, 
  which completes the proof.
\end{proof}

Thus, it suffices to prove the convergence of $\bar{\mathbf{W}}^\epsilon$. This is accomplished in 
the following lemma.
\begin{lemma}\label{lem:V-eps-conv}
  Assuming the conditions of Theorem~\ref{thm:VU-eps-conv},
  $\lim_{\epsilon \to 0}{\mathcal W}_{\CC^\alpha}^p(\bar{\mathbf{W}}^\epsilon, \mathbf{W}) = 0$ for 
  any $p \ge 1$ and $\alpha \in \left(1 - H \wedge \f 13, \f 12\right)$.
\end{lemma}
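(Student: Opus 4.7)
The plan is to establish weak convergence of $\bar{\mathbf W}^\epsilon$ to $\mathbf W$ in $\CC^\alpha$ and then upgrade this to $p$-Wasserstein convergence using uniform moment bounds of the kind appearing in \eqref{eq:V-eps-unif-bd}. Inspecting the four blocks of $\bar{\mathbb W}^\epsilon$, the diagonal block $\mathbb V^\epsilon$ is already controlled by Proposition~\ref{prop:V-epsilon-conv}, while the diagonal block $\mathbb U$ is $\epsilon$-independent and inherits its law from $B^H$. The cross blocks are genuine Young integrals since $\alpha + H > 1$ in the assumed range. The real content is therefore the joint convergence of $(V^\epsilon, \mathbb V^\epsilon, U)$ in a form that produces, in the limit, a $V$-component independent of $U$.

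The core step is joint weak convergence of $(V^\epsilon, U)$ to $(V, U)$ with $V$ and $U$ independent. The mixed second moment cancels identically in $\epsilon$: conditioning on $y$ (independent of $B^H$), writing the Young integral against $B^H$ and using that $\E[f(x, y_{r/\epsilon}) - \bar f(x)] = 0$ by stationarity gives $\E[V^\epsilon_{s,t}(x) \otimes U_{s',t'}(\tilde x)] = 0$. To upgrade this cancellation to genuine asymptotic independence I would carry out the cumulant strategy advertised in the introduction: express all joint cumulants of the relevant linear functionals of $V^\epsilon$, $\mathbb V^\epsilon$ and $U$ as iterated $y$-expectations against products of fractional kernels, and use Assumption~\ref{ass:ergodic} to show that any mixed cumulant involving at least one $U$-factor and one $V^\epsilon$- or $\mathbb V^\epsilon$-factor vanishes as $\epsilon \to 0$. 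Combined with the marginal convergence of $(V^\epsilon, \mathbb V^\epsilon)$ supplied by Proposition~\ref{prop:V-epsilon-conv}, this identifies the Gaussian limit with the desired block-diagonal covariance.

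Convergence of the cross Young-integral blocks then follows from the fact that $(X, Y) \mapsto \int X \otimes \dd Y$ is a continuous bilinear map from $C^\alpha \times C^\gamma$ into $C^{2\alpha}$ whenever $\alpha + \gamma > 1$, together with the continuous mapping theorem applied with $\gamma$ chosen slightly below $H$. To lift weak convergence to the $\mathcal W_{\CC^\alpha}^p$-distance, we need uniform boundedness of $\|\bar{\mathbf W}^\epsilon\|_{\CC^\alpha}$ in $L^{p'}$ for some $p' > p$. The bounds on $V^\epsilon$ and $\mathbb V^\epsilon$ come from \cite[Lemmas 4.2 and A.1]{Hairer-Li:22}, those for $U, \mathbb U$ follow from Gaussian tail estimates for $B^H$, and the bounds for the cross Young integrals follow from the deterministic Young estimate, mirroring the uniform-integrability step in the proof of Lemma~\ref{lem:wasserstein-bar-to-nobar}.

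The principal obstacle is the cumulant step: although the mixed covariance cancels pointwise in $\epsilon$, for non-Gaussian pre-limits this does not directly entail joint independence in the limit. Establishing that higher mixed cumulants decay uniformly in $(s, t, s', t')$ requires careful bookkeeping of $y$-correlations (decaying exponentially by the spectral gap) weighted against the fractional kernel $|r-r'|^{2H-2}$, in the same spirit as the analysis used for $(V^\epsilon, \mathbb V^\epsilon)$ alone in \cite{Hairer-Li:22}, but now with the added complication that every term carrying a $U$-factor can potentially couple the slow kernel to the fast decorrelation.
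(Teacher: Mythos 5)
Your proposal follows essentially the same route as the paper: joint Gaussianity of $(V^\epsilon,U)$ via vanishing higher-order cumulants of linear projections (Section~\ref{sec:cumulant}, Lemma~\ref{lem:cumulant-conv}), independence of the limit components from the identically vanishing mixed covariance (Lemmas~\ref{lem:correlation} and \ref{lem:law-of-W}), convergence of the cross blocks via continuity of the bilinear Young map (Lemma~\ref{lem:lift-conv}), and the upgrade from weak to $p$-Wasserstein convergence via the uniform moment bounds of \cite{Hairer-Li:22}. The one superfluous element is your suggestion to run the cumulant analysis also on $\mathbb V^\epsilon$: the paper avoids this entirely by treating all second-level objects deterministically (marginal convergence of $\mathbb V^\epsilon$ from Proposition~\ref{prop:V-epsilon-conv} plus Young continuity for the mixed integrals) once the path components $(V^\epsilon,U)$ are known to converge jointly in Wasserstein distance.
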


\begin{proof}
  The convergence of  $W^\epsilon$  is established in Section~\ref{sec:cumulant} with a cumulant 
  argument, in particular by Lemma~\ref{lem:cumulant-conv} $W^\epsilon$ converges to a Gaussian limit 
  $W$ in the $p$-Wasserstein distance for any $p \ge 1$. The convergence of the L\'evy areas 
  $\bar{\mathbb{W}}^\epsilon$ as $\epsilon \to 0$ then follows directly from Lemma \ref{lem:lift-conv}. 
  This completes the proof of the Lemma.
\end{proof}

Since the processes converge jointly to a Gaussian process (c.f. \ref{lem:cumulant-conv}), to show 
independence of the components, it suffices to understand the correlation between the two components 
to identify their joint law. To this end, we utilize the following lemma.

If $\mu\in \mathcal{M}(\mathbb{R}^2)$ and $(X,Y) \sim \mu$, we define
$$\Cov\mu := \Cov(X, Y).$$
\begin{lemma}\label{lem:correlation}
  Let $\mu, \mu_n  \in \mathcal{M}(\mathbb{R}^2)$ be such that $\mathcal{W}^p(\mu_n, \mu)\to 0$ for 
  some $p \ge 2$ and moreover, $\lim_{n \to \infty}\Cov\mu_n = 0$. Then $\Cov \mu = 0$.
  In other words, the map $\Cov : \mathcal{M}(\mathbb{R}^2) \to \mathbb{R}$ is (sequentially) continuous.
\end{lemma}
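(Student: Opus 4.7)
The plan is to upgrade the $\mathcal{W}^p$ convergence (with $p\ge 2$) to convergence of all moments of order at most two, and then observe that covariance is a continuous functional of such moments. This amounts to showing that the map $\nu \mapsto \int z_1 z_2\, \dd\nu(z_1,z_2)$ is continuous on $\{\nu : \mathcal{W}^p(\nu_n,\nu) \to 0\}$, together with the analogous statement for the marginal means.

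First I would fix an optimal coupling: for each $n$ choose a random vector $((X_n,Y_n),(X,Y))$ on some probability space with $(X_n,Y_n)\sim \mu_n$, $(X,Y)\sim \mu$, and
\begin{equation*}
  \bigl(\mathbb{E}\bigl[\lVert(X_n,Y_n)-(X,Y)\rVert^p\bigr]\bigr)^{1/p} \;=\; \mathcal{W}^p(\mu_n,\mu) \;\longrightarrow\; 0.
\end{equation*}
A standard property of the $p$-Wasserstein distance is that convergence in $\mathcal{W}^p$ implies both weak convergence and convergence of the $p$-th absolute moment; in particular $\sup_n \mathbb{E}[\lVert(X_n,Y_n)\rVert^p]<\infty$. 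Since $p\ge 2$, this yields uniform $L^2$-boundedness of $X_n$ and $Y_n$ (and of $X,Y$).

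Next I would control the mixed product by a triangle inequality together with Cauchy--Schwarz:
\begin{equation*}
  \bigl|\mathbb{E}[X_nY_n] - \mathbb{E}[XY]\bigr|
  \;\le\; \lVert X_n-X\rVert_{L^2}\,\lVert Y_n\rVert_{L^2} + \lVert X\rVert_{L^2}\,\lVert Y_n-Y\rVert_{L^2}.
\end{equation*}
By the coupling, $\lVert X_n-X\rVert_{L^2}\vee \lVert Y_n-Y\rVert_{L^2}\le \mathcal{W}^2(\mu_n,\mu)\le \mathcal{W}^p(\mu_n,\mu) \to 0$, while the remaining norms are uniformly bounded by the previous paragraph, so $\mathbb{E}[X_nY_n]\to \mathbb{E}[XY]$. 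The same argument (with $Y$ replaced by $1$) gives $\mathbb{E}[X_n]\to \mathbb{E}[X]$ and $\mathbb{E}[Y_n]\to \mathbb{E}[Y]$. Combining these three convergences,
\begin{equation*}
  \Cov(\mu_n) = \mathbb{E}[X_nY_n]-\mathbb{E}[X_n]\mathbb{E}[Y_n] \;\longrightarrow\; \mathbb{E}[XY]-\mathbb{E}[X]\mathbb{E}[Y] = \Cov(\mu),
\end{equation*}
which proves sequential continuity of $\Cov$ on $\mathcal{M}(\mathbb{R}^2)$ with respect to $\mathcal{W}^p$, $p\ge 2$. The hypothesis $\Cov(\mu_n)\to 0$ therefore forces $\Cov(\mu)=0$.

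I do not anticipate a serious obstacle: the only subtle point is to use the $p$-Wasserstein distance (not merely weak convergence) so as to pick up uniform $L^2$-bounds and to control the mixed second moment via Cauchy--Schwarz along a single coupling. Everything else is bookkeeping.
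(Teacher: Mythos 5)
Your proposal is correct and follows essentially the same route as the paper: couple $\mu_n$ and $\mu$ via the Wasserstein convergence, decompose the difference of mixed second moments, and control it with Cauchy--Schwarz using the uniform $L^2$ bounds implied by $p\ge 2$. The only cosmetic difference is that the paper reduces to centered measures at the outset, whereas you track the convergence of the marginal means explicitly by the same argument.
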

\begin{proof}
  We may assume without loss of generality that all measures under consideration are centered. It 
  also suffices to consider the case $p = 2$.

  By the characterization of convergence in the 2-Wasserstein distance, there exist $L^2$-random variables
  $(X_n, X)$ defined on a common probability space $(\Omega, \mathcal{F}, \mathbb{P})$ such that
  $$(X_n, X)\subseteq L^2(\Omega, \mathbb{R}), \quad X_n \sim \mu_n, \quad  X \sim \mu \hbox{ and  }
    \lim_{n\to \infty} \E[\|X_n - X\|^2] \to 0.$$
  Then,
  \begin{align*}
    \Cov\mu & = \mathbb{E}[X^1 X^2]
    = \mathbb{E}[X^1_n X^2_n] + (\mathbb{E}[X^1 X^2] - \mathbb{E}[X^1_n X^2_n])                           \\
            & = \mathbb{E}[X^1_n X^2_n] + \mathbb{E}[(X^1 - X^1_n) X^2_n] + \mathbb{E}[X_1(X^2 - X^2_n)].
  \end{align*}
  The first term $\Cov(\mu_n)$ converges to $0$ by assumption. The second and third terms vanish as 
  $n\to \infty$ by the Cauchy–Schwarz inequality and the $L^2$-convergence of $X_n$ to $X$.
  Hence, $\Cov \mu = 0$ as required.
\end{proof}

With this lemma in mind, the independence of $W^1$ and $W^2$ follows immediately.
\begin{lemma} \label{lem:law-of-W}
  The limiting distribution $W = (W_1, W_2)$ is such that $W_1$ are $W_2$ are independent Gaussian 
  processes with $W_1$ having the same law as $V$ as given in Proposition~\ref{prop:V-epsilon-conv} 
  and $W_2$ having the same law as $U$.
\end{lemma}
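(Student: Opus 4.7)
The plan is to combine three ingredients: the joint Gaussianity of the limit $W$ furnished by the cumulant analysis of Section~\ref{sec:cumulant} (via Lemma~\ref{lem:cumulant-conv}), the Wasserstein continuity of covariance from Lemma~\ref{lem:correlation}, and an explicit covariance computation at fixed $\epsilon$. The marginal identification is immediate: since $\bar W^\epsilon = (V^\epsilon, U)$ has a second coordinate independent of $\epsilon$, and since Lemma~\ref{lem:wasserstein-bar-to-nobar} allows one to pass freely between the limits of $\bar{\mathbf W}^\epsilon$ and $\mathbf W^\epsilon$, the marginal $W_2$ must have the law of $U$, while $W_1$ must have the law of $V$ by Proposition~\ref{prop:V-epsilon-conv}.

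To establish independence, since $W$ is jointly centered Gaussian, it suffices to verify that the cross-covariance vanishes on every pair of scalar pointwise evaluations: for any $0 \le s \le t \le T$, $0 \le u \le v \le T$, $x, y \in \mathbb{R}^d$ and indices $i, j$,
\begin{equation*}
\mathbb{E}\bigl[W_1(s,t)(x)^i \, W_2(u,v)(y)^j\bigr] = 0.
\end{equation*}
I would prove the stronger statement that the corresponding cross-covariance of $(V^\epsilon, U)$ already vanishes for every $\epsilon > 0$. Since $y^\epsilon$ is independent of $B^H$, and the integrand $r \mapsto f(x, y^\epsilon_r) - \bar f(x)$ is almost surely H\"older-regular of exponent exceeding $1 - H$ under Assumption~\ref{ass:C3}, the Young integral defining $V^\epsilon_{s,t}(x)$ coincides, conditionally on $y^\epsilon$, with a Wiener integral against $B^H$ with deterministic integrand; likewise $U_{u,v}(y) = \bar f(y) B^H_{u,v}$ is a Wiener integral of a deterministic integrand. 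The standard covariance formula for Wiener integrals against fractional Brownian motion with $H > \frac{1}{2}$ then yields
\begin{equation*}
\mathbb{E}\bigl[V^\epsilon_{s,t}(x) \otimes U_{u,v}(y) \mid y^\epsilon\bigr] = \epsilon^{\frac{1}{2} - H}\, H(2H-1)\!\! \int_s^t\!\!\int_u^v \bigl(f(x, y^\epsilon_r) - \bar f(x)\bigr) \otimes \bar f(y)\, |r-r'|^{2H-2}\, dr'\, dr.
\end{equation*}
Taking expectation in $y^\epsilon$ and using the stationarity of $y^\epsilon$ with law $\mu$ together with the defining identity $\mathbb{E}[f(x, y^\epsilon_r)] = \bar f(x)$, the integrand vanishes identically, so $\mathbb{E}[V^\epsilon_{s,t}(x) \otimes U_{u,v}(y)] = 0$ for every $\epsilon > 0$.

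Applying Lemma~\ref{lem:correlation} componentwise to the pair $(V^\epsilon_{s,t}(x)^i, U_{u,v}(y)^j)$, whose convergence in every $p$-Wasserstein distance follows from Lemma~\ref{lem:V-eps-conv} together with the uniform moment bound \eqref{eq:V-eps-unif-bd}, propagates the vanishing covariance to the limit $W$. Joint Gaussianity then upgrades zero cross-covariance to genuine independence of $W_1$ and $W_2$, completing the identification. The only delicate point I anticipate is the rigorous justification of the conditional Wiener-integral representation of $V^\epsilon_{s,t}(x)$; if one prefers to avoid invoking the coincidence of Young and Wiener integrals for H\"older-regular deterministic integrands, the same computation can instead be performed on the mollified process $V^{\epsilon,\delta}$ of Section~\ref{sec:construction-rp} (where $B^H$ is replaced by its convolution smoothing $B^\delta$), yielding vanishing cross-covariance for each $\delta > 0$, and then passed to the limit $\delta \to 0$ using the convergence of $\mathbf V^{\epsilon,\delta}$ to $\mathbf V^\epsilon$ recalled there.
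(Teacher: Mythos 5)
Your proposal is correct and follows essentially the same route as the paper: marginals identified immediately, Gaussianity reducing independence to vanishing cross-covariance, an explicit computation showing $\Cov(V^\epsilon_{s,t}(x), U_{u,v}(y))=0$ for each fixed $\epsilon$ via the independence of $y^\epsilon$ and $B^H$ together with $\E[f(x,y^\epsilon_r)]=\bar f(x)$, and Lemma~\ref{lem:correlation} to pass the vanishing covariance to the limit. The only (cosmetic) difference is that you condition on $y^\epsilon$ and invoke the fractional Wiener-integral covariance kernel, whereas the paper conditions on $\sigma(B)$ and exchanges the conditional expectation with the mixed Wiener--Stieltjes integral; your flagged fallback through the mollified $V^{\epsilon,\delta}$ adequately covers the regularity caveat you raise.
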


\begin{proof}
  The marginal distributions $W_1$ and $W_2$ follows directly from Proposition~\ref{prop:V-epsilon-conv} 
  and the definition.

  Since $W$ is a Gaussian process, it suffices to show that the covariances of its one-dimensional 
  projections vanish. To this end, we apply the above lemma (Lemma~\ref{lem:correlation}). By 
  Lemma~\ref{lem:cumulant-conv}, $(V^\epsilon, U)$ converge in the $p$-Wasserstein to $W^1$ and $W^2$.

  We now compute the covariance of the one-dimensional projections directly.
  For any $\lambda, \mu \in \R$ and $a,b$  finite index sets, setting $V^\epsilon_a, U_b$  as in 
  Equation~\eqref{eq:W_a-def}, we define
  $$W_a^\epsilon := \lambda U_a + \mu V^\epsilon_a.$$
  Then,
  \begin{align*}
    \Cov(V^\epsilon_a, U_b)
     & = \epsilon^{-\alpha}f_{i_b}(x_b)\mathbb{E}\left[B^H_{s_b, t_b}
    \int_{s_a}^{t_a} (f_{i_a}(x_a, y^\epsilon_r) - \bar{f}_{i_a}(x_a)) \dd B^H_r\right]                            \\
     & = \epsilon^{-\alpha}f_{i_b}(x_b)\mathbb{E}\left[B^H_{s_b, t_b}
    \int_{s_a}^{t_a} \mathbb{E}[(f_{i_a}(x_a, y^\epsilon_r) - \bar{f}_{i_a}(x_a)) \mid \sigma(B)] \dd B^H_r\right] \\
     & = \epsilon^{-\alpha}f_{i_b}(x_b)\mathbb{E}\left[B^H_{s_b, t_b}
    \int_{s_a}^{t_a} \mathbb{E}[(f_{i_a}(x_a, y^\epsilon_r) - \bar{f}_{i_a}(x_a))] \dd B^H_r\right] = 0
  \end{align*}
  where we used the independence of $B^H$ and $y^\epsilon$ in the penultimate step. We may exchange 
  the conditional expectation with the integral since the latter is a mixed Wiener–Stieltjes integral.
  This shows that the cross-covariances vanish, hence $W_1$ and $W_2$ are independent.
\end{proof}

\subsection{Cumulant computations for one-dimensional projections}\label{sec:cumulant}

In this subsection, we show the linear combinations of $V^\epsilon$ and $U$ converge to a Gaussian 
limit in the sense of cumulants. To be precise, we prove that the cumulants of their one-dimensional 
projections of order higher than two vanish. This will allow us to conclude that $(V^\epsilon, U)$ 
converges to a Gaussian weakly, and consequently, also in the Wasserstein distance. Indeed, 
convergence of cumulants implies convergence of moments, as can be seen from the formula 
\eqref{cul-relation2}. This implies weak convergence when the limit is determined by its moments, as 
a consequence of \cite[Theorem 30.2]{Billingsley:95}. This is classically the case for real Gaussians 
(see Example 30.1 in \cite{Billingsley:95}), and extends directly to the multivariate setting.

Throughout this section, we adopt the notations of \cite{Hairer-Li:22}.
For $B$ a finite index set, let $\mathcal{P}(B)$ denote the set of partitions of $B$ and we
equip it with the partial order given by refinement of partitions. Namely, for
$\Delta, \tilde \Delta \in \mathcal{P}(B)$, we write $\Delta \le \tilde \Delta$ if for any set
$A \in \Delta$ there exists $B \subset \tilde \Delta$ with $A \subset B$. This induces a
lattice structure on $\mathcal{P}(B)$ with the join operator $\vee$ defined such that $\Delta \vee \tilde \Delta$
is the finest partition greater than both $\Delta$ and $\tilde \Delta$. Then, we denote
$\mathcal{G}^S(B)$ for the set of pairs $(\Delta, p)$, where
$\{\Delta, p\} \subseteq \mathcal{P}(B)$ for which $p$ consists only of pairs and $\Delta \vee p = \{B\}$.
We remark that, the latter condition is known as \textit{connectedness} of the partitions
with an example and a counter-example given by Figure~\ref{fig:con-part} and~\ref{fig:non-con-part}.

\begin{figure}[h!]
  \centering
  \begin{minipage}[t]{0.45\textwidth}
    \centering
    \begin{tikzpicture}[baseline=7.5ex,scale=0.9]
      \foreach \i in {1, ..., 6}
        {
          \pgfmathtruncatemacro{\x}{(\i-1)/2};
          \pgfmathtruncatemacro{\y}{\i - 2*\x};
          \coordinate (\i) at (\x,\y);
        }
      \draw[partition] (1) -- (2) -- (4);
      \draw[partition] (3) -- (5) -- (6);
      \foreach \i in {1, 2, 3}
        {
          \pgfmathtruncatemacro{\pairstart}{2*\i-1};
          \pgfmathtruncatemacro{\pairend}{2*\i};
          \draw[pairing] (\pairstart) -- (\pairend);
        }
      \foreach \i in {1, ..., 6}
        {
          \coordinate[label={[font=\tiny, left]:$\i$}] (a) at (\i);
        }
    \end{tikzpicture}
    \caption{A connected partition.}
    \label{fig:con-part}
  \end{minipage}
  \begin{minipage}[t]{0.45\textwidth}
    \centering
    \begin{tikzpicture}[baseline=7.5ex,scale=0.9]
      \foreach \i in {1, ..., 6}
        {
          \pgfmathtruncatemacro{\x}{(\i-1)/2};
          \pgfmathtruncatemacro{\y}{\i - 2*\x};
          \coordinate (\i) at (\x,\y);
        }
      \draw[partition] (1) -- (2) -- (4) -- (3) -- (1);
      \draw[partition] (5) -- (6);
      \coordinate (mid) at (1/2,3/2);
      \draw[partition] (mid) -- (mid);
      \foreach \i in {1, 2, 3}
        {
          \pgfmathtruncatemacro{\pairstart}{2*\i-1};
          \pgfmathtruncatemacro{\pairend}{2*\i};
          \draw[pairing] (\pairstart) -- (\pairend);
        }
      \foreach \i in {1, ..., 6}
        {
          \coordinate[label={[font=\tiny, left]:$\i$}] (a) at (\i);
        }
    \end{tikzpicture}
    \caption{A disconnected partition.}
    \label{fig:non-con-part}
  \end{minipage}
\end{figure}

Finally, for a index set $B$
we write $X_B = \{X_a\}_{a \in B}$ for a collection of random variables indexed by $B$ and 
$X^B = \prod_{a \in B} X_a$. Finally, we denote $\mathbb{E}_c X_B$ for the joint cumulant of $X_B$.

Recall that the joint cumulant of the family of random variables $X_B = \{X_a\}_{a \in B}$, with all 
moments,  is defined through the cumulant generating function. Equivalently, cumulants can be 
expressed in terms of moments by the recursive relation
\begin{equ}\label{cul-relation1}
  \mathbb{E}_c[X_B]
  = \sum_{\Delta \in \mathcal{P}(B)} (|\Delta| - 1)! \, (-1)^{|\Delta| - 1}
  \prod_{A \in \Delta} \mathbb{E}\!\left[\prod_{a \in A} X_a\right].
\end{equ}
For Gaussian random variables, all cumulants of order greater than two vanish, which characterizes 
the Gaussian law. Conversely, we can express the moments in terms of cumulants as follows
\begin{equ}\label{cul-relation2}
  \mathbb{E}\!\left[\prod_{a \in B} X_a\right]
  = \sum_{\Delta \in \mathcal{P}(B)} \prod_{A \in \Delta} \mathbb{E}_c[X_A],
\end{equ}
where the sum runs over all partitions $\Delta$ of $B$, and for each block $A \in \Delta$,
$\mathbb{E}_c[X_A]$ denotes the cumulant of the sub-collection $X_A$.
For small index sets $B$, this gives
$$
  \begin{aligned}
     & \mathbb{E}[X_1] = \mathbb{E}_c[X_1], \qquad
    \mathbb{E}[X_1 X_2] = \mathbb{E}_c[X_1, X_2] + \mathbb{E}_c[X_1]\,\mathbb{E}_c[X_2], \\
     & \mathbb{E}[X_1 X_2 X_3]
    = \mathbb{E}_c[X_1, X_2, X_3]
    + \mathbb{E}_c[X_1, X_2]\mathbb{E}_c[X_3]
    + \mathbb{E}_c[X_1, X_3]\mathbb{E}_c[X_2]                                            \\
     & \qquad \qquad \qquad  + \mathbb{E}_c[X_2, X_3]\mathbb{E}_c[X_1]
    + \mathbb{E}_c[X_1]\mathbb{E}_c[X_2]\mathbb{E}_c[X_3].
  \end{aligned}
$$

Fixing $\lambda, \mu \in \mathbb{R}$ and taking $B$ a finite index set,
we compute the cumulant of $W^\epsilon_B = (W_a^\epsilon : a \in B)$ with
\begin{equation}\label{eq:W_a-def}
  W_a^\epsilon \equiv \lambda U_a + \mu V^\epsilon_a :=
  \lambda\bar{f}_{i_a}(x_a) B^H_{s,t} + \mu \epsilon^{-\alpha} \int_{s}^{t}f_{i_a}(x_a, y^\epsilon_r) -
  \bar{f}_{i_a}(x_a) \dd B^H_r,
\end{equation}
where $f_{i_a}$ denote a collection of functions satisfying the regularity conditions of the main theorem.
We remark that since $W_a^\epsilon=-W_a^\epsilon$ in law, the odd cumulants vanish, and it suffices to compute
the even cumulants. Moreover, by dividing $W_a^\epsilon$ by $\mu$, we can assume $\mu = 1$ without 
loss of generality.

For a pair of partitions $(\Delta, p) \in \mathcal{G}^S(B)$,we denote $S_\Delta = \{A \in \Delta : |A| = 1\}$
where we drop the subscript $\Delta$ whenever the partition is clear from the context.
For simplicity, we also introduce the shorthands
$$\alpha=H - \frac{1}{2}, \quad X_a=f_{i_a}(x, Y_r) -\bar{f}_{i_a}(x), \quad
  \bar{X}_a= f_{i_a}(x_a, Y_r) -\bar{f}_{i_a}(x_a) +\lambda \epsilon^{\alpha}\bar{f}_{i_a}(x_a).$$
Note that $X_a$ and $\bar X_a$ depend on the time variable $r$.
We can break down the joint cumulants into integration with respect to fractional kernels. We have 
the following decomposition, similar to \cite[Lemma 5.4]{Hairer-Li:22}, for the joint cumulants of $W$
(writing here $\dd r^B = \prod_{a \in B} \dd r_a$)
\begin{equation}\label{eq:cumulant-decomp}
  \begin{aligned}
    \mathbb{E}_c [W_B^\epsilon] & =  \sum_{(\Delta, p) \in \mathcal{G}^S(B)} C_H^{\frac{|B|}{2}}
    \epsilon^{\frac{|B|}{2}}\int_{[s, t]^B}\prod_{A\in \Delta}
    \left(\mathbb{E}_c \bar{X}_A \right) \prod_{\{a, b\}\in p}|r_a-r_b|^{2H-2} \dd r^B             \\
                                & =: \sum_{ (\Delta, p)\in \mathcal{G}^S(B)}J^\epsilon(\Delta, p),
  \end{aligned}\end{equation}
where $C_H = H(2H-1)$.
We remark that, in \cite{Hairer-Li:22} the right-hand side of \eqref{eq:cumulant-decomp} is
instead summed over partitions without singletons, i.e.
$|S_\Delta| = 0$. This is because in their case,  $J^\epsilon(\Delta, p)= 0$ whenever $|S_\Delta| > 0$
and thus is omitted within the sum. Identity \eqref{eq:cumulant-decomp} follows by first applying 
Wick's formula to the fractional Brownian motion introducing a partition in pairs, and \eqref{cul-relation2}.
Firstly, by the independence of $Y$ and the fractional Brownian motion,  for any index set $A$,
\begin{equ}\label{cul-relation3}
  \mathbb{E} [W_A^\epsilon] = (C_H\epsilon)^{\frac{|B|}{2}}\E\left[\int_{s/\eps}^{t/\eps}\dots 
  \int_{s/\eps}^{t/\eps}\E\left[ \prod_{a\in A}  \bar X_a \right]dB_{t_1}\dots dB_{t_{|A|}}\right].
\end{equ}
We can now adapt the computation in \cite{Hairer-Li:22}, cf. the proofs Proposition 4.16, and 
Lemma 5.5 there for calculations. Smoothing out $B$ with $B^\delta$, use the identity 
\eqref{cul-relation2} and Wick's product formula. For any index set $B$,
\begin{equs}
  \mathbb{E} [W_B^\epsilon]
    = (C_H\epsilon)^{\frac{|B|}{2}}\sum_{p\in \mathcal P(B)}\int_{s/\eps}^{t/\eps}\dots
    \int_{s/\eps}^{t/\eps}\E\left[ \prod_{\ell\in B}  \bar X_\ell\right]\prod_{\{a, b\}\in p}|r_a-r_b|^{2H-2} \dd r^B.
\end{equs}
Then observing that $\mathbb{E}\left[\prod_{\ell \in B} \bar X_\ell\right]= \sum_{\Delta \in \mathcal{P}(B)} 
\prod_{A \in \Delta} \mathbb{E}_c[\bar X_A]$, one can obtain \eqref{eq:cumulant-decomp}. We include 
the proof for reader's convenience and for future reference.

\begin{lemma}
  Let $B$ be a finite index set. Suppose that we are given a family of functions 
  $\{J(\Delta,p): \Delta\in \mathcal P( A), p \hbox{ a pairing in} \;
    \mathcal P(A), A\subset B\}$ and an identity of the form
  $$\E[W_A]=\sum_{\Delta\in \mathcal P(A)} \sum_{p \in \mathcal P(A)} J(\Delta, p), \quad \forall A\subset B.$$
  Suppose that furthermore $J$ satisfies the following factorization property:
  $$J(\Delta, p)=\prod _{\hat A\in \Delta\vee p} J(\Delta_{\hat A}, p_{\hat A}), $$
  where $\Delta_{\hat A}, p_{\hat A}$ denote the restrictions of the partitions $\Delta, p$ to the 
  subset $\hat A\subset B$. Then, for any $A\subset B$, $$\E_c[W_A]=\sum_{(\Delta, p)\in \mathcal 
  G^S(A)} J(\Delta, p).$$
\end{lemma}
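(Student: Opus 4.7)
The plan is to isolate the connected contributions by grouping the sum according to the join $\Delta\vee p$, then to apply the factorization hypothesis to match the resulting expansion with the standard moment–cumulant relation \eqref{cul-relation2}. Define
$$K(A) \;:=\; \sum_{(\Delta,p) \in \mathcal{G}^S(A)} J(\Delta,p)$$
for every $A\subseteq B$. The goal is to prove $K(A) = \mathbb{E}_c[W_A]$ for all such $A$, which I will do by induction on $|A|$.

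For the first step I would reorganize the hypothesized expansion $\mathbb{E}[W_A] = \sum_{(\Delta,p)} J(\Delta,p)$ by the value of the coarsening $\Pi := \Delta\vee p \in \mathcal{P}(A)$. The key combinatorial observation is that a pair $(\Delta,p)$ satisfies $\Delta\vee p = \Pi$ if and only if, for every block $\hat A\in \Pi$, the restricted pair $(\Delta_{\hat A}, p_{\hat A})$ lies in $\mathcal{G}^S(\hat A)$; conversely any choice of per-block connected pairs reassembles into a pair whose join equals $\Pi$. Applying the factorization property block by block gives
$$\sum_{\substack{(\Delta,p)\\ \Delta\vee p = \Pi}} J(\Delta,p) \;=\; \prod_{\hat A\in \Pi} \sum_{(\tilde\Delta,\tilde p)\in \mathcal{G}^S(\hat A)} J(\tilde\Delta,\tilde p) \;=\; \prod_{\hat A\in \Pi} K(\hat A),$$
and summing over $\Pi$ yields
$$\mathbb{E}[W_A] \;=\; \sum_{\Pi \in \mathcal{P}(A)} \prod_{\hat A\in \Pi} K(\hat A).$$

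Comparing this with the moment–cumulant relation \eqref{cul-relation2} gives the same expansion with $\mathbb{E}_c[W_{\hat A}]$ in place of $K(\hat A)$ for every $A\subseteq B$. I would then conclude by induction on $|A|$: for $|A|=1$ only the trivial partition contributes, so $K(A) = \mathbb{E}[W_A] = \mathbb{E}_c[W_A]$. For $|A|\ge 2$, the only partition in $\mathcal{P}(A)$ containing $A$ itself as a block is $\Pi = \{A\}$; isolating its contribution on both sides and invoking the inductive hypothesis for the remaining partitions, which involve only proper subblocks $\hat A\subsetneq A$, the sums over strictly finer $\Pi$ match term by term and we are left with $K(A) = \mathbb{E}_c[W_A]$.

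I do not expect a serious obstacle: the argument is essentially Möbius inversion on the partition lattice, with the factorization hypothesis playing the role of the multiplicativity of moments under independence. In the intended application the factorization is transparent, since the cumulants $\mathbb{E}_c[\bar X_A]$ couple indices only within a single block of $\Delta$, the kernel $\prod_{\{a,b\}\in p}|r_a-r_b|^{2H-2}$ couples indices only within pairs of $p$, and the variables $r_a$ within different blocks of $\Delta\vee p$ are independent in the integrand, so the integral decomposes as a product over blocks of $\Delta\vee p$.
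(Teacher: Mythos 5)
Your proposal is correct and follows essentially the same route as the paper: both arguments regroup the sum over $(\Delta,p)$ according to the join $U=\Delta\vee p$, use the factorization hypothesis together with the observation that $\Delta\vee p=U$ exactly when each restricted pair $(\Delta_{\hat A},p_{\hat A})$ is connected, obtaining $\E[W_A]=\sum_{U\in\mathcal P(A)}\prod_{\hat A\in U}K(\hat A)$, and then identify $K(\hat A)=\E_c[W_{\hat A}]$ by inverting against \eqref{cul-relation2}. Your spelling out of the inversion as an induction on $|A|$ is just a more explicit version of the paper's appeal to "inversion."
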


\begin{proof}
  Given $\Delta,p$ let $U$ denote $\Delta \vee p$, the coarsest partition compatible with $\Delta $ 
  and $p$. Then $U$ contains precisely all subsets $\hat A$ with the property that the coarsest 
  partition compatible with the restricted partitions $\Delta_{\hat A}$ and  $p_{\hat A}$ is the 
  trivial partition $\{\hat{A}\}$. With this observation in mind, let us recast the summation 
  $\sum_{\Delta\in \mathcal P(B)} \sum_{p \in \mathcal P(B)}$ as follows.
  \begin{equs}
    \E[W_B]
    & = \sum_{\Delta \in \mathcal P(B)} \sum_{p  \in \mathcal P(B)} 
        \sum_{U \in \mathcal P(B)}\mathbf{1}_{\Delta \vee p = U} J(\Delta, p)\\
    & = \sum_{U \in \mathcal P(B)} \sum_{\Delta \in \mathcal P(B)} \sum_{p \in \mathcal P(B)}
        \mathbf{1}_{\Delta \vee p = U} \prod_{\hat{A}\in U} J(\Delta_{\hat A}, p_{\hat A})\\
    & = \sum_{U\in \mathcal P(B)}\prod_{\hat A \in U}  \sum_{(\Delta, p)\in \mathcal G^S(\hat A) }J(\Delta, p),
  \end{equs}
  where the last equality follows from the observation made above.
  Using \eqref{cul-relation2}, this allows by inversion to identify 
  $\E_c[W_A]= \sum_{(\Delta, p)\in \mathcal G^S(A) }J(\Delta, p)$.
\end{proof}
Note that in our case, the function $J^\epsilon$ defined in \eqref{eq:cumulant-decomp} satisfies the 
factorization property required in the lemma above, as the integral over $[s/\eps, t/\eps]^B$ 
factorizes over the blocks of the partition $\Delta \vee p$.

Recall that adding deterministic constants to arguments does not change cumulants of order $\ge 2$, 
so that
\begin{equation}\label{eq:cumulant-barX}
  \mathbb{E}_c \bar {X}_A = \begin{cases}
    \lambda \epsilon^{\alpha} \bar f_{i_a}(x_a) & \text{if } |A|=1   \\
    \mathbb{E}_c X_A                            & \text{if } |A| >1.
  \end{cases}
\end{equation}
Observe that for $|A|=1$,  $\mathbb{E}_c \bar {X}_A  $ is a constant in time (we used the fact that 
$\mathbb{E}_c [X_A]=0$). For fixed $(\Delta, p)\in {\mathcal G}^S(B)$, $S$ the set of singletons in 
$\Delta$, we have that
\begin{align*}
  J^\epsilon(\Delta,p) & \simeq \epsilon^{|S|\alpha + \frac{|B|}{2}}\int_{[\frac{s}{\epsilon}, \frac{t}{\epsilon}]^B}
  \prod_{A\in \Delta \setminus S} \left(\mathbb{E}_c X_A \right) \prod_{\{a, b\}\in p}|r_a-r_b|^{2H-2} \dd r^B          \\
                       & \lesssim \epsilon^{|S|\alpha + \frac{|B|}{2}}\int_{[\frac{s}{\epsilon}, \frac{t}{\epsilon}]^B}
  \prod_{\substack{A \in \Delta \setminus S}}e^{-c \sum_{i,j \in A}|r_i-r_j|}\prod_{\{a, b\} \in p}
  |r_a - r_b|^{2H-2} \dd r^B =: \tilde J^\epsilon(\Delta,p)
\end{align*}
in which we used \cite[Proposition 4.8]{Hairer-Li:22} to obtain the inequality
on the second line. The exponential upper bound on $  \prod_{A\in \Delta \setminus S} 
\left(\mathbb{E}_c X_A \right)$ is due to the ergodic assumption on $y$.
In the next lemma we discuss $\tilde J^\epsilon(\Delta,p)$ where  $|B| \ge 4$.

\begin{lemma}\label{lem:iterate}
  Let $ (\Delta, p) \in \mathcal{G}^S(B)$ be such that $|B| \ge 4$, $\{\sigma\} \in S_\Delta$ and
  $\rho$ be the root of $\sigma$ in $p$, i.e. $\{\sigma, \rho\} \in p$. Set $B' := B \setminus \{\sigma, \rho\}$.
  Then, denoting
  $A_\rho$ for the set in the partition $\Delta$ containing $\rho$, we define
  $ (\Delta', p') \in \mathcal{G}^S(B')$ where
  $$\ \Delta' := \{A_\rho \setminus \{\rho\}\} \cup (\Delta \setminus \{\{\sigma\}, A_\rho\})
    \text{ and } p' := p \setminus \{\rho, \sigma\},$$
  and we have
  $$\tilde J^\epsilon(\Delta,p) \lesssim \epsilon^{(|S_\Delta| - |S_{\Delta'}|)(H - \f 12) + (2 - 2H)} 
    \tilde J^\epsilon(\Delta', p').$$
\end{lemma}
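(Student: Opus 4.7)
The plan is to integrate out the two variables $r_\sigma$ and $r_\rho$ from the defining integral for $\tilde J^\epsilon(\Delta,p)$ and then match prefactors with $\tilde J^\epsilon(\Delta',p')$. A key preliminary observation is that, because $|B|\ge 4$ and $\Delta\vee p=\{B\}$, the block $A_\rho$ cannot itself be the singleton $\{\rho\}$: otherwise $\{\sigma,\rho\}$ would be a connected component of $\Delta\vee p$ disjoint from the rest of $B$, contradicting connectedness. Hence $A_\rho\setminus\{\rho\}\ne\emptyset$ and the recipe for $(\Delta',p')$ is well defined.

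First I would integrate out $r_\sigma$. Since $\sigma$ is a singleton of $\Delta$, no exponential factor involves $r_\sigma$, and the only pairing containing $\sigma$ is $\{\sigma,\rho\}$. Thus the only $r_\sigma$-dependent factor in the integrand is $|r_\sigma-r_\rho|^{2H-2}$, and a direct computation using $2H-2\in(-1,0)$ gives
\begin{equation*}
\int_{s/\epsilon}^{t/\epsilon} |r_\sigma-r_\rho|^{2H-2}\,dr_\sigma
\le \frac{2}{2H-1}\left(\tfrac{t-s}{\epsilon}\right)^{2H-1}
\lesssim \epsilon^{1-2H},
\end{equation*}
uniformly in $r_\rho\in[s/\epsilon,t/\epsilon]$. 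Next I integrate out $r_\rho$. After removing the pairing $\{\sigma,\rho\}$, the variable $r_\rho$ appears only in the exponential factor associated with $A_\rho$. Splitting
\begin{equation*}
\exp\Bigl(-c\!\!\sum_{i,j\in A_\rho}|r_i-r_j|\Bigr)
= \exp\Bigl(-c\!\!\sum_{i\in A_\rho\setminus\{\rho\}}|r_i-r_\rho|\Bigr)\cdot \exp\Bigl(-c\!\!\sum_{i,j\in A_\rho\setminus\{\rho\}}|r_i-r_j|\Bigr),
\end{equation*}
and bounding the first factor by $e^{-c|r_{i_0}-r_\rho|}$ for any fixed $i_0\in A_\rho\setminus\{\rho\}$, I obtain $\int_{s/\epsilon}^{t/\epsilon}(\cdots)\,dr_\rho\lesssim 1$ uniformly in the remaining variables.

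What survives is precisely the integrand of $\tilde J^\epsilon(\Delta',p')$: the second exponential factor above becomes the block $A_\rho\setminus\{\rho\}$ in $\Delta'\setminus S_{\Delta'}$ when $|A_\rho|\ge 3$, while if $|A_\rho|=2$ that block is a singleton in $\Delta'$ that contributes no factor, consistently with the empty sum. All remaining exponentials and pairings $p\setminus\{\sigma,\rho\}=p'$ are unaffected by the two integrations. Finally I match prefactors: the ratio of leading powers in $\epsilon$ is
\begin{equation*}
\frac{\epsilon^{|S_\Delta|\alpha+|B|/2}}{\epsilon^{|S_{\Delta'}|\alpha+|B'|/2}}=\epsilon^{(|S_\Delta|-|S_{\Delta'}|)(H-\tfrac12)+1},
\end{equation*}
since $|B|-|B'|=2$, which combined with the $\epsilon^{1-2H}$ factor from the $r_\sigma$ integration yields exactly $\epsilon^{(|S_\Delta|-|S_{\Delta'}|)(H-\tfrac12)+(2-2H)}$.

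The only subtle point—really a bookkeeping check rather than an obstacle—is verifying that the integrand after integration coincides with the one of $\tilde J^\epsilon(\Delta',p')$ in both cases $|A_\rho|\ge 3$ and $|A_\rho|=2$, and that the corresponding change in the number of singletons $|S_\Delta|-|S_{\Delta'}|$ (which is $1$ in the first case and $0$ in the second) is consistent with the prefactor computation above. This is resolved cleanly by the splitting of the exponential shown above.
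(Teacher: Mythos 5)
Your proof is correct and follows essentially the same route as the paper: integrate out $r_\sigma$ against the kernel $|r_\sigma-r_\rho|^{2H-2}$ to gain $\epsilon^{1-2H}$, then integrate out $r_\rho$ against a single exponential factor $e^{-c|r_{i_0}-r_\rho|}$ (available because $A_\rho$ cannot be a singleton by connectedness), and finally match the $\epsilon$-prefactors using $|B|-|B'|=2$ and the case analysis on whether $A_\rho\setminus\{\rho\}$ becomes a singleton. Your bookkeeping of the exponential splitting and of $|S_\Delta|-|S_{\Delta'}|$ is, if anything, slightly more careful than the paper's.
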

Below is a graphical illustration of the estimate iterated three times
\begin{equation}\label{eq:iteration}
  \tilde J^\epsilon\left(\begin{tikzpicture}[baseline=7.5ex,scale=0.9]
      \foreach \i in {1,...,12}
        {
          \pgfmathtruncatemacro{\x}{(\i-1)/2};
          \pgfmathtruncatemacro{\y}{\i - 2*\x};
          \coordinate (\i) at (\x,\y);
        }
      \draw[partition] (1) -- (2) -- (4) -- (6);
      \draw[partition] (5) -- (7) -- (9) -- (11) -- (12);
      \draw[partition] (3) -- (3);
      \draw[partition] (8) -- (8);
      \draw[partition] (10) -- (10);
      \foreach \i in {1,...,6}
        {
          \pgfmathtruncatemacro{\pairstart}{2*\i-1};
          \pgfmathtruncatemacro{\pairend}{2*\i};
          \draw[pairing] (\pairstart) -- (\pairend);
        }
      \fill[Red] (3) circle (2.5pt);
      \fill[Red] (8) circle (2.5pt);
      \fill[Red] (10) circle (2.5pt);
      \fill[RoyalBlue] (4) circle (2.5pt);
      \fill[RoyalBlue] (7) circle (2.5pt);
      \fill[RoyalBlue] (9) circle (2.5pt);
      \foreach \i in {1,...,12}
        {
          \coordinate[label={[font=\tiny, left]:${\i}$}] (a) at (\i);
        }
    \end{tikzpicture}\right) \lesssim
  \epsilon^{3\left(\frac{3}{2} - H\right)}
  \tilde J^\epsilon\left(\begin{tikzpicture}[baseline=7.5ex,scale=0.9]
      \foreach \i in {1, ..., 6}
        {
          \pgfmathtruncatemacro{\x}{(\i-1)/2};
          \pgfmathtruncatemacro{\y}{\i - 2*\x};
          \coordinate (\i) at (\x,\y);
        }
      \draw[partition] (1) -- (2) -- (4);
      \draw[partition] (3) -- (5) -- (6);
      \foreach \i in {1, 2, 3}
        {
          \pgfmathtruncatemacro{\pairstart}{2*\i-1};
          \pgfmathtruncatemacro{\pairend}{2*\i};
          \draw[pairing] (\pairstart) -- (\pairend);
        }
      \coordinate[label={[font=\tiny, left]:$1$}] (a) at (1);
      \coordinate[label={[font=\tiny, left]:$2$}] (a) at (2);
      \coordinate[label={[font=\tiny, left]:$5$}] (a) at (3);
      \coordinate[label={[font=\tiny, left]:$6$}] (a) at (4);
      \coordinate[label={[font=\tiny, left]:$11$}] (a) at (5);
      \coordinate[label={[font=\tiny, left]:$12$}] (a) at (6);
    \end{tikzpicture}\right).
\end{equation}

\begin{proof}
  By scaling and translating, we may assume without loss of generality that $s = 0$ and $t = 1$.
  We compute $\tilde J^\epsilon(\Delta,p)$ by induction
  \begin{equation}\label{eq:J-estimate}
    \begin{split}
          & \epsilon^{-(|S_\Delta|\alpha + \frac{|B|}{2})}\tilde J^\epsilon(\Delta,p)              \\
      =\  & \int_{[0, \epsilon^{-1}]^{B}} \prod_{\substack{A \in \Delta \setminus S_\Delta}}
      e^{-c \sum_{i,j \in A}|r_i - r_j|} \prod_{\{a, b\} \in p}|r_a - r_b|^{2H-2} \dd r^B          \\
      =\  & \int_{[0, \epsilon^{-1}]^{B'}} \prod_{\substack{A' \in \Delta' \setminus S_{\Delta'}}}
      e^{-c \sum_{i,j \in A'}|r_i - r_j|} \prod_{\{a, b\} \in p'}|r_a - r_b|^{2H-2}I_{\sigma}(r_{A_\rho}) \dd r^{B'}
    \end{split}
  \end{equation}
  where $ I_{\sigma}(r_{A_\rho})$ is given below.
  \begin{align*}
    I_{\sigma}(r_{A_\rho})
     & := \int_0^{\epsilon^{-1}} e^{-c\sum_{i, j \in A_\rho} |r_i - r_j|}
    \int_0^{\epsilon^{-1}} |r_\sigma - r_\rho|^{2H-2}
    \dd r_\sigma \dd r_{\rho}                                                                              \\
     & \le \int_0^{\epsilon^{-1}} e^{-c|r_a - r_\rho|}
    \left((\epsilon^{-1} - r_\rho)^{2H - 1} - r_\rho^{2H - 1}\right) \dd r_\rho                            \\
     & \lesssim \left(\int_0^{r_a} + \int_{r_a}^{\epsilon^{-1}}\right) e^{-c|r_a - r_\rho|}
    \left((\epsilon^{-1} - r_\rho)^{2H - 1} - r_\rho^{2H - 1}\right) \dd r_\rho                            \\
     & \lesssim  \epsilon^{1 - 2H} \int_0^\infty e^{-ct} \dd t + \int_0^\infty e^{-cr_\rho}
    \left(\left((\epsilon^{-1} - r_a) - r_\rho\right)^{2H - 1} - (r_\rho + r_a)^{2H - 1}\right) \dd r_\rho \\[1em]
     & \lesssim \epsilon^{1 - 2H} + (1 + r_a^{2H - 1} + (\epsilon^{-1} - r_a)^{2H - 1}),
  \end{align*}
  where in  the second line, we chose some $a \in A_\rho$ distinct from $\rho$ while bounding
  all other exponentials by $1$. Such an $a$ always exists as $A_\rho$ cannot be a singleton without
  making $(\Delta, p)$ disconnected. In the last line we replaced $r_\rho$ by $\epsilon^{-1}$ in one 
  term and by $r_a$ by the other. In the last step, we bound
  $\left((\epsilon^{-1} - r_a) - r_\rho\right)^{2H - 1} \leq (\epsilon^{-1} - r_a)^{2H-1} + r_\rho^{2H-1}$ and
  $(r_\rho+ r_a)^{2H-1} \leq r_\rho^{2H-1} + r_a^{2H-1}$. Then the $r_\rho$ terms integrate to a 
  gamma function controlled by a constant.

  Thus, for $r_a \in [0, \epsilon^{-1}]$, we have that
  $I_{\sigma}(r_{A_\rho}) \lesssim \epsilon^{1 - 2H}$. Consequently, substituting the above estimate
  into \eqref{eq:J-estimate}, we find
  \begin{align*}
               & \epsilon^{-(|S_\Delta|\alpha + \frac{|B|}{2})}\tilde J^\epsilon(\Delta,p)                               \\
    \lesssim\  & \epsilon^{1 - 2H}\int_{[0, \epsilon^{-1}]^{B'}} \prod_{\substack{A' \in \Delta' \setminus S_{\Delta'}}}
    e^{-c \sum_{i,j \in A'}|r_i - r_j|} \prod_{\{a, b\} \in p'}|r_a - r_b|^{2H-2} \dd r^{B'}                             \\
    =\         & \epsilon^{1 - 2H} \epsilon^{-(|S_{\Delta'}|\alpha + \frac{|B'|}{2})}\tilde J^\epsilon(\Delta', p').
  \end{align*}
  Thus, as $|B'| = |B| - 2$, we have
  $$\tilde J^\epsilon(\Delta,p) \lesssim \epsilon^{(|S_\Delta| - |S_{\Delta'}|)\alpha + (2 - 2H)} \tilde J^\epsilon(\Delta', p')$$
  as claimed.
\end{proof}

\begin{lemma}\label{lem:GD-kappa}
  For $ (\Delta, p) \in \mathcal{G}^S(B)$ with $S_\Delta = \varnothing$ and $|B| > 2$, we have
  \begin{equation}
    \tilde J^\epsilon(\Delta,p) \lesssim \epsilon^{2 - 2H}.
  \end{equation}
\end{lemma}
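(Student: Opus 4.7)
The plan is to bound $\tilde J^\epsilon(\Delta, p)$ directly via a change of variables adapted to the block structure of $\Delta$, followed by a power-counting estimate on the resulting integral over block-centres. Since the hypothesis $S_\Delta = \varnothing$ prevents a direct application of Lemma~\ref{lem:iterate} (which requires at least one singleton), a single-pass estimate is what I would aim for.

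First I would enumerate the blocks $A_1, \ldots, A_k$ of $\Delta$, each of cardinality at least two (so in particular $k \le |B|/2$). For each block $A_j$ fix a representative $\sigma_j$ and pass to block-centred coordinates $s_j := r_{\sigma_j}$ and $u_a := r_a - s_j$ for $a \in A_j \setminus \{\sigma_j\}$. In these coordinates the exponential factor $\prod_j \exp(-c \sum_{a,b \in A_j}|u_a - u_b|)$ confines each $u_a$ to a bounded region. The pair kernels then split into \emph{internal} kernels $|u_a - u_b|^{2H-2}$ (both endpoints in a single block), integrable in $u$ since $2H - 2 > -1$, and \emph{crossing} kernels $|s_i - s_j + \xi_e|^{2H-2}$ where $\xi_e$ is a bounded, $u$-dependent shift. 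Integrating out the $u$'s yields an $O(1)$ factor, reducing the estimate to controlling
\[
  I_k^\epsilon := \int_{[0,\epsilon^{-1}]^k}\prod_{e \in E}|s_{i(e)} - s_{j(e)} + \xi_e|^{2H-2}\,ds,
\]
where $E$ denotes the multigraph of crossing pairs on the $k$ blocks.

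Next, by the connectedness $\Delta \vee p = \{B\}$, the multigraph $E$ is connected; fix a spanning tree $T \subset E$. Using translation invariance, one block-centre is free and integrates to $\epsilon^{-1}$. Integrating the remaining $k-1$ variables sequentially along $T$, each tree edge contributes $\int_0^{\epsilon^{-1}}|s|^{2H-2}\,ds \lesssim \epsilon^{1-2H}$. Combining this with the prefactor $\epsilon^{|B|/2}$ gives
\[
  \tilde J^\epsilon(\Delta, p) \lesssim \epsilon^{|B|/2 - 1 + (k-1)(1-2H)} = \epsilon^{(|B|/2 - 1)(2-2H) + (|B|/2 - k)(2H-1)}.
\]
Because $|B| \ge 4$ (recall $|B|$ must be even and $|B| > 2$) and $k \le |B|/2$, both terms in this rewriting are non-negative, and the first already delivers the desired $\epsilon^{2-2H}$ bound, with equality precisely when $|B|=4$ and $k=2$.

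The hard part will be to rigorously handle the contribution of the non-tree edges $E \setminus T$. A pointwise bound by a constant is impossible since each crossing kernel is singular, while a direct H\"older or Hardy--Littlewood--Sobolev argument fails for $H$ close to $1/2$, as the product $|s|^{2H-2}|s+\xi|^{2H-2}$ is not integrable near $s = 0$ when $\xi = 0$ and $H \le 3/4$. The resolution I envisage is to exploit that distinct pairs $e, e' \in E$ joining the same two blocks involve distinct $u$-variables, so that the shifts $\xi_e, \xi_{e'}$ differ generically after averaging over the bounded $u$-region; this separates the singularities of the corresponding kernels and restores integrability uniformly in $\epsilon$, ensuring that non-tree edges only improve the tree-based bound.
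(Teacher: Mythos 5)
Your overall architecture is sound and genuinely different from the paper's: you run a global power count over the block multigraph via a spanning tree, whereas the paper first proves a restriction lemma (Lemma~\ref{lem:restrict-G}) that discards everything outside a minimal connected core (a cycle, or a path joining two blocks each containing an internal pair) at the exact cost $\epsilon^{-|B\setminus B'|/2}$, and then computes the core integral explicitly. Your exponent bookkeeping is correct: $\epsilon^{|B|/2-1+(k-1)(1-2H)}$ does equal $\epsilon^{(|B|/2-1)(2-2H)+(|B|/2-k)(2H-1)}$, which is $\lesssim\epsilon^{2-2H}$ for $|B|\ge 4$ and $k\le |B|/2$. But the argument is incomplete exactly where you say it is, and the fix you sketch is not the right mechanism.

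Two concrete problems. First, the step ``integrating out the $u$'s yields an $O(1)$ factor, reducing the estimate to $I_k^\epsilon$'' is circular as written: the crossing kernels depend on the $u$'s through the shifts $\xi_e$, so you cannot integrate the $u$'s away first and still have the $\xi_e$ available inside $I_k^\epsilon$; the order of integration has to be organized around the tree. Second, and more importantly, the non-tree kernels cannot be disposed of by ``generic separation of singularities.'' Take the pure cycle case ($k=N$ blocks of size two, the pairing forming an $N$-cycle): after parametrizing by the tree-edge differences $t_1,\dots,t_{N-1}$, the non-tree kernel is $|-\textstyle\sum_i t_i+\xi|^{2H-2}$ with $\xi$ a sum of bounded offsets, so when the offsets are small its singular locus passes straight through the common singularity $t=0$ of the tree kernels; the resulting product is \emph{not} locally integrable for $H$ near $\tfrac12$ (already the triangle $\int |x|^{2H-2}|y|^{2H-2}|x-y|^{2H-2}\,dx\,dy$ diverges at the origin for $H\le \tfrac23$ by homogeneity). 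What actually saves the estimate is not separation but averaging the shift against the exponential confinement, $\int_{\R}e^{-c|u|}\,|u+c_0|^{2H-2}\,du\lesssim 1$ uniformly in $c_0$: each non-tree kernel can be integrated out at cost $O(1)$ \emph{provided} it owns an offset variable appearing in no other pair kernel. Since each element of $B$ lies in exactly one pair of $p$, this reduces to the combinatorial claim that every non-tree crossing pair has at least one endpoint which is not its block's representative, which requires choosing the representatives adapted to the spanning tree and is not automatic. This is exactly the work done in the paper by the telescoping identity $\sum_i p_i+\sum_i q_i=0$ in the cycle case and by the bi-partition $C^{\pm}$ in Lemma~\ref{lem:restrict-G}; until you supply the analogous step, the proof is not complete.
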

The exponent $ \epsilon^{2 - 2H}$ is better than what we would obtain by a direct application of 
Proposition 4.11 and Corollary 4.13 of \cite{Hairer-Li:22}. We provide an elementary proof of this 
estimate in Appendix~\ref{sec:proof-GD-kappa}.

\begin{lemma}\label{lem:cumulant-conv}
  For any finite index set $B$ with $|B| > 2$, we have that
  \begin{equation}\label{eq:cumulant-lim}
    \lim_{\epsilon \to 0} \mathbb{E}_c W_B^\epsilon = 0.
  \end{equation}
  Moreover, when $|B| = 2$, the limit $\lim_{\epsilon \to 0} \mathbb{E}_c W_B^\epsilon$ exists and 
  is finite. Consequently, $W^\epsilon$ converges to a Gaussian limit in the $p$-Wasserstein distance 
  for every $p \ge 1$.
\end{lemma}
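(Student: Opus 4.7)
The plan is to combine the decomposition \eqref{eq:cumulant-decomp} of the joint cumulant as a sum over connected pairs $(\Delta, p) \in \mathcal{G}^S(B)$ with the iterative bound of Lemma~\ref{lem:iterate} and the base estimate of Lemma~\ref{lem:GD-kappa}, and then to promote cumulant convergence to $p$-Wasserstein convergence via uniform moment bounds.

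For $|B| > 2$, I would first bound each $J^\epsilon(\Delta, p)$ by $\tilde J^\epsilon(\Delta, p)$ using the ergodic decay of the cumulants $\mathbb{E}_c X_A$ supplied by Assumption~\ref{ass:ergodic}. Starting from $(\Delta, p)$, I then iterate Lemma~\ref{lem:iterate}, removing a singleton together with its pair partner at each step; each iteration contributes a factor $\epsilon^{(|S_\Delta| - |S_{\Delta'}|)(H - 1/2) + (2 - 2H)}$, which is strictly positive for $H \in (1/2, 1)$ regardless of whether the step creates a new singleton (the case $|A_\rho| = 2$, which gives $|S_\Delta| - |S_{\Delta'}| = 0$). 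The iteration terminates when either the residual index set has size $2$ or the residual partition has no singletons. In the latter case, Lemma~\ref{lem:GD-kappa} supplies an additional $\epsilon^{2-2H}$ factor; in the former, a direct estimate yields $\tilde J^\epsilon(\Delta', p') \lesssim 1$, since the prefactor $\epsilon^{|B'|/2}$ cancels the blow-up of the fractional kernel on $[s/\epsilon, t/\epsilon]^2$. Summing over the finitely many $(\Delta, p) \in \mathcal{G}^S(B)$ then yields $\mathbb{E}_c W^\epsilon_B \to 0$.

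For $|B| = 2$ the cumulant reduces to the covariance $\Cov(W_a^\epsilon, W_b^\epsilon)$. Expanding $W^\epsilon = \lambda U + \mu V^\epsilon$ and invoking the vanishing of the cross-covariance $\Cov(V^\epsilon_a, U_b) = 0$ (a direct consequence of the independence of $y$ and $B^H$, as carried out in the proof of Lemma~\ref{lem:law-of-W}), the limit reduces to $\lambda^2 \Cov(U_a, U_b) + \mu^2 \lim_{\epsilon \to 0} \Cov(V^\epsilon_a, V^\epsilon_b)$, both of which are finite, the second by Proposition~\ref{prop:V-epsilon-conv}.

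To finish, I would use \eqref{cul-relation2} to express every moment of a one-dimensional projection of $W^\epsilon$ as a polynomial in its cumulants; the preceding steps then force all moments to converge to those of a centered Gaussian with the identified covariance, and since the Gaussian law is determined by its moments, \cite[Theorem 30.2]{Billingsley:95} delivers finite-dimensional weak convergence. Tightness in $C^\alpha$ follows from the uniform H\"older estimates \eqref{eq:V-eps-unif-bd}, and weak convergence together with uniform moment bounds yields $p$-uniform integrability and hence $p$-Wasserstein convergence for every $p \geq 1$. The hardest part will be the careful bookkeeping of the $\epsilon$-exponent along the iteration of Lemma~\ref{lem:iterate}: verifying strict positivity in every terminating configuration, in particular the degenerate boundary case $|B'| = 2$ (where no extra decay from Lemma~\ref{lem:GD-kappa} is available) and the case $|A_\rho| = 2$ (where a new singleton is created but the one-step exponent $2-2H$ still suffices).
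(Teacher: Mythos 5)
Your proposal follows essentially the same route as the paper: the cumulant decomposition \eqref{eq:cumulant-decomp}, iteration of Lemma~\ref{lem:iterate} with the terminal configurations handled by Lemma~\ref{lem:GD-kappa} or by direct computation, the separate treatment of $|B|=2$ via the vanishing cross-covariance, and the moment-determinacy/tightness argument to upgrade to $p$-Wasserstein convergence. The only imprecision is in the terminal case $|B_\tau|=2$ with two singletons, where the $O(1)$ bound comes from the full prefactor $\epsilon^{2(H-\frac12)+1}$ (not $\epsilon^{|B'|/2}$ alone) cancelling the $\epsilon^{-2H}$ growth of the kernel integral, so that the strict positivity of the accumulated exponent rests on $N=\frac{|B|}{2}-1\ge 1$, exactly as in the paper's case analysis.
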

\begin{proof}
  By the cumulant estimate~\eqref{eq:cumulant-decomp}, to prove \eqref{eq:cumulant-lim} it suffices to 
  show that $\tilde J^\epsilon(\Delta,p) \to 0$ for any $ (\Delta, p) \in \mathcal{G}^S(B)$. To this end, 
  we iteratively apply the estimate from Lemma~\ref{lem:iterate} to $(\Delta,p)$ until either 
  $S_{\Delta} = \varnothing$ or the underlying index set $B_\tau$ has two elements.
  Denoting $(\Delta_\tau, p_\tau) \in \mathcal{G}^S(B_\tau)$ for the terminal partition after applying
  Lemma~\ref{lem:iterate} $N$ times, we have the following three cases:
  \begin{itemize}
    \item[(i)] $S_{\Delta_\tau} = \varnothing$ and $|B_\tau| > 2$. In this case, by Lemma~\ref{lem:GD-kappa} we have the estimate
          $$\tilde{J}^\epsilon(\Delta,p) \lesssim \epsilon^{|S_\Delta|\alpha + (2-2H)N}\tilde{J}^\epsilon(\Delta_\tau, p_\tau)
            \lesssim \epsilon^{|S_\Delta|\alpha + (2-2H)(N+1)}.$$
          Thus, as the exponent is strictly positive, the integral vanishes as $\epsilon \to 0$.
    \item[(ii)] $S_{\Delta_\tau} = \varnothing$, $|B_\tau| = |\{a, b\}| = 2$ and $\Delta_\tau = p_\tau = \{B_\tau\}$. Namely, we have that
          $(\Delta_\tau, p_\tau) = \begin{tikzpicture}[baseline=-0.5ex, scale=0.9]
              \coordinate (1) at (0,0);
              \coordinate (2) at (1,0);
              \draw[partition] (1) -- (2);
              \draw[pairing] (1) -- (2);
            \end{tikzpicture}$.
          \begin{align*}
            \tilde J^\epsilon(\{B_\tau\}, \{B_\tau\}) & = \epsilon \int_{s/\eps}^{t/\eps}\!\int_{s/\eps}^{t/\eps}
            e^{-c|r_a - r_b|} |r_a-r_b|^{2H-2} \dd r_a \dd r_b                                                        \\
                                                      & \lesssim \int_\mathbb{R} e^{-c|u|} |u|^{2H-2} \dd u < \infty.
          \end{align*}
          In this case, $N$ is necessarily $\frac{|B|}{2} - 1 > 0$ and
          $$\tilde{J}^\epsilon(\Delta,p) \lesssim \epsilon^{|S_\Delta|\alpha 
            + (2-2H)\left(\frac{|B|}{2}-1\right)} \tilde J^\epsilon(\Delta_\tau, p_\tau) \lesssim
            \epsilon^{|S_\Delta|\alpha + (2-2H)\left(\frac{|B|}{2}-1\right)}.$$
          Hence, as the exponent is strictly positive,
          the left hand side vanishes as $\epsilon \to 0$.
    \item[(iii)] $S_{\Delta_\tau} \neq \varnothing$, $|B_\tau| = |\{a, b\}| = 2$ and $p_\tau = \{B_\tau\}$ 
          and $\Delta_\tau=\{\{a\}, \{b\}\}$. Namely, we have
          $(\Delta_\tau, p_\tau) = \begin{tikzpicture}[baseline=-0.5ex, scale=0.9]
              \coordinate (1) at (0,0);
              \coordinate (2) at (1,0);
              \draw[partition] (1) -- (1);
              \draw[partition] (2) -- (2);
              \draw[pairing] (1) -- (2);
            \end{tikzpicture}$.
          Again, $N = \frac{|B|}{2} - 1$ and we may compute $\tilde J(\Delta_\tau, p_\tau)$ directly as
          $$\tilde J^\epsilon( \{\{a\},\{b\}\}, B_\tau) = \epsilon^{2\alpha +1}\int_0^{\epsilon^{-1}}
            \int_0^{\epsilon^{-1}}  |u-v|^{2H-2}\dd u \dd v \simeq \epsilon^{2\alpha + 1 - 2H} = 1.$$
          Thus,
          $$\tilde{J}^\epsilon(\Delta,p) \lesssim \epsilon^{(|S_\Delta|-2)\alpha 
            + (2-2H)\left(\frac{|B|}{2}-1\right)}\tilde{J}^\epsilon(\Delta_\tau, p_\tau)
            \lesssim \epsilon^{(|S_\Delta|-2)\alpha + (2-2H)\left(\frac{|B|}{2}-1\right)}$$
          which vanishes as $\epsilon \to 0$.
  \end{itemize}
  Thus, as all the cumulants of $W^\epsilon_B$ with $|B| > 2$ vanish as $\epsilon \to 0$.

  In the case where $|B| = 2$, we have that
  \begin{equation*}
    \mathbb{E}_c W_B^\epsilon = J^\epsilon\left( \begin{tikzpicture}[baseline=-0.5ex, scale=0.9]
        \coordinate (1) at (0,0);
        \coordinate (2) at (1,0);
        \draw[partition] (1) -- (2);
        \draw[pairing] (1) -- (2);
      \end{tikzpicture} \right) + J^\epsilon\left( \begin{tikzpicture}[baseline=-0.5ex, scale=0.9]
        \coordinate (1) at (0,0);
        \coordinate (2) at (1,0);
        \draw[partition] (1) -- (1);
        \draw[partition] (2) -- (2);
        \draw[pairing] (1) -- (2);
      \end{tikzpicture} \right).
  \end{equation*}
  The first term converges as $\epsilon \to 0$ by \cite[Proposition 5.1]{Hairer-Li:22}, while the 
  second term can be computed directly as
  \begin{align*} J^\epsilon\left( \begin{tikzpicture}[baseline=-0.5ex, scale=0.9]
                                                     \coordinate (1) at (0,0);
                                                     \coordinate (2) at (1,0);
                                                     \draw[partition] (1) -- (1);
                                                     \draw[partition] (2) -- (2);
                                                     \draw[pairing] (1) -- (2);
                                                   \end{tikzpicture} \right) & = C_H \lambda^2 \bar{f}_{i_a}(x_a) \bar{f}_{i_b}(x_b) \epsilon^{2\alpha + 1}
               \int_{s/\epsilon}^{t/\epsilon}\int_{s/\epsilon}^{t/\epsilon}
               |r_a - r_b|^{2H - 2} \dd r_a \dd r_b                                                                                                                    \\
                                                                                & = C_H \lambda^2 \bar{f}_{i_a}(x_a) \bar{f}_{i_b}(x_b)
               \int_{s}^{t}\int_{s}^{t}
               |r_a - r_b|^{2H - 2} \dd r_a \dd r_b.
  \end{align*}
  which is an expression independent of $\epsilon$, which shows the existence of the limit in this case.

  Finally, as a consequence of the tightness of $V^\epsilon$ (c.f. \cite[Section 4.2]{Hairer-Li:22}),
  we have that $W^\epsilon$ is also tight. By the computations above, all the cumulants of order strictly
  greater than two vanish as $\epsilon \to 0$, and the second order cumulants converge. This 
  identifies the finite dimensional distributions of any limit point of $W^\epsilon$ as Gaussian 
  with identified covariance in Lemma~\ref{lem:law-of-W}. Thanks to tightness in $C^\alpha$ 
  (c.f. Lemma \ref{lem:V-eps-conv}), this identifies the law of the limit uniquely. As in the proof 
  of Lemma \ref{lem:wasserstein-bar-to-nobar}, thanks to uniform $L^p$ bounds, the weak convergence 
  implies convergence in $p$-Wasserstein distance over $\mathcal{C}^\alpha$.
\end{proof}

\subsection{Convergence of the L\'evy area}\label{sec:lift-conv}

Since Proposition~\ref{prop:V-epsilon-conv} identifies the limit of the first component of
$\bar W^\epsilon$ while its second component is independent of $\epsilon$, to conclude the
proof of Lemma~\ref{lem:V-eps-conv}
it suffices to identify the limit of the L\'evy area $\mathbb{W}^\epsilon$ (or equivalently,
$\bar{\mathbb{W}}^\epsilon$). This follows directly from the following lemma.

\begin{lemma} \label{lem:lift-conv}
  Let $\alpha \in (\f 13, \f 12), \beta > 1 - \alpha$ and $p, q, r > 0$ be such that
  $\f 1 p \ge \f 1 q + \f 1 r$. Let  $Y \in L^r C^\beta$ and $\mathbf{X}^\epsilon = 
  (X^\epsilon, \mathbb{X}^\epsilon)\in \cC^\alpha$.
  Define the joint lift of
  $X^\epsilon$ and $Y$ as
  $$\mathbb{A}_{st}^\epsilon := \begin{pmatrix}
      \mathbb{X}^\epsilon_{st}                  & \int_s^t X^\epsilon_{s,r} \otimes \dd Y_r \\
      \int_s^t Y_{s,r} \otimes \dd X^\epsilon_r & \int_s^t Y_{s,r} \otimes \dd Y_r
    \end{pmatrix}$$
  where the integrals on the right hand side are Young integrals.
  Suppose that $\mathbf{X}^\epsilon = (X^\epsilon, \mathbb{X}^\epsilon)$
  converge to $\mathbf{X} = (X, \mathbb{X})$
  in the $q$-Wasserstein distance as $\alpha$-H\"older rough paths
  as $\epsilon \to 0$, and that  $(X^\epsilon , Y)$
  converges jointly to $(X, Y)$ in the $p$-Wasserstein distance in $C^\alpha$.

  Then
  $\mathbb{A}^\epsilon$ converges in the $p$-Wasserstein distance to the joint lift $\mathbb{A}$ of 
  $X$ and $Y$ defined by
  $$\mathbb{A}_{st} := \begin{pmatrix}
      \mathbb{X}_{st}                  & \int_s^t X_{s,r} \otimes \dd Y_r \\
      \int_s^t Y_{s,r} \otimes \dd X_r & \int_s^t Y_{s,r} \otimes \dd Y_r
    \end{pmatrix}$$
  where the integrals are Young integrals.
\end{lemma}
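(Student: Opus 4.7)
The plan is to verify convergence of each block of the matrix $\mathbb{A}^\epsilon$ separately, combining Young-integral continuity estimates for the off-diagonal blocks with a Skorohod-type coupling and the H\"older inequality applied to the moment hypotheses on $X^\epsilon-X$ and $Y$. The top-left entry $\mathbb{X}^\epsilon$ converges to $\mathbb{X}$ in $q$-Wasserstein by hypothesis, and the bottom-right entry $\int_s^t Y_{s,r}\otimes \dd Y_r$ does not depend on $\epsilon$, so the two Young-type cross integrals are where all the work lies.

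For these I would apply the standard sewing/Young estimate. Since $\alpha+\beta>1$ and moreover $\beta>\alpha$ (because $\alpha<\f12$ forces $1-\alpha>\alpha$), one has
\[
\left|\int_s^t (X^\epsilon_{s,r}-X_{s,r})\otimes \dd Y_r\right| \lesssim \|X^\epsilon - X\|_\alpha\,\|Y\|_\beta\,(t-s)^{\alpha+\beta},
\]
and an analogous bound for $\int_s^t Y_{s,r}\otimes \dd (X^\epsilon_r - X_r)$. Dividing by $(t-s)^{2\alpha}$ controls both off-diagonal differences in the $C^{2\alpha}$-seminorm by $T^{\beta-\alpha}\|X^\epsilon - X\|_\alpha\|Y\|_\beta$, turning the problem into moment estimates on $\|X^\epsilon-X\|_\alpha\|Y\|_\beta$.

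To connect this path-wise bound to the desired Wasserstein statement, I would combine the two Wasserstein convergences into joint almost-sure convergence on a common probability space. Joint tightness of $(\mathbf{X}^\epsilon, Y)$ in $\mathcal{C}^\alpha \times C^\beta$ is immediate from the individual convergences, and the joint limit is identified by its marginals in the setting at hand, where $\mathbb{X}$ is a canonical function of $X$ up to an It\^o correction; Skorohod representation then produces a common probability space on which each term of the lift converges path-wise by continuity of Young integration. An application of H\"older's inequality with exponents $q$ and $r$, using $\f1p\le\f1q+\f1r$, gives
\[
\left\|\,\left\|\int (X^\epsilon-X)\otimes \dd Y\right\|_{2\alpha}\,\right\|_{L^p} \lesssim \|X^\epsilon-X\|_{L^q C^\alpha}\,\|Y\|_{L^r C^\beta} \longrightarrow 0,
\]
and similarly for the symmetric term; combined with the $L^q$-convergence of $\mathbb{X}^\epsilon$ and the fact that the bottom-right block is $\epsilon$-independent, this yields the $p$-Wasserstein convergence of $\mathbb{A}^\epsilon$ to $\mathbb{A}$.

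The main obstacle is the coupling step: the two Wasserstein hypotheses a priori refer to separate couplings of $(X^\epsilon,X)$, and welding them into one coupling of $(X^\epsilon, \mathbb{X}^\epsilon, Y)$ with $(X, \mathbb{X}, Y)$ requires either tightness plus identification of the joint limit as sketched above, or a direct gluing argument for the optimal couplings. Once a single coupling realizing both convergences is secured, the remaining ingredients are a textbook Young-integral estimate and H\"older's inequality.
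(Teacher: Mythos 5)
Your argument is correct and takes essentially the same route as the paper: both proofs rest on the bilinear continuity of the Young integration map $C^\alpha\times C^\beta\to C^{\alpha+\beta}$ applied to the two off-diagonal blocks (the other two blocks being handled by hypothesis), followed by the generalized H\"older inequality $\E\big[\|X^\epsilon-X\|_\alpha^p\,\|Y\|_\beta^p\big]^{1/p}\lesssim \|X^\epsilon-X\|_{L^qC^\alpha}\,\|Y\|_{L^rC^\beta}$, which is valid on a probability space precisely because $\tfrac1p\ge\tfrac1q+\tfrac1r$ (you wrote the inequality the wrong way round, a typo since your displayed estimate is the correct one). The Skorohod detour and the joint-limit identification you flag as the main obstacle are not needed: the paper simply works on the coupling furnished directly by the Wasserstein hypotheses and estimates the $L^p$ distance there.
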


We remark that, by Equation~\eqref{eq:chens}, $\mathbb{A}^\epsilon$ and $\mathbb{A}$ satisfy Chen's relations.

\begin{proof}
  By Young's inequality,
  the mapping
  $$(X, Y) \mapsto \left((s, t) \mapsto \int_s^t X_{s ,r} \otimes \dd Y_r\right)$$
  is a bi-linear
  continuous map $C^\alpha \times C^\beta \to C^{\alpha+\beta}$. So, as
  convergence in law is stable under continuous maps, the components of $\mathbb{A}^\epsilon$
  converge in law to components of $\mathbb{A}$. To conclude convergence in $p$-Wasserstein distance,
  it suffices to verify that the $p$-th moments of $\mathbb{A}^\epsilon$ converge to that of $\mathbb{A}$.
  Indeed, this follows as
  \begin{align*}
    \mathbb{E} \left[\left\| \int X^\epsilon \otimes \dd Y - \int X \otimes \dd Y \right\|_{\alpha+\beta}^{p} \right]^{\f 1p}
     & \lesssim \mathbb{E}[\|X^\epsilon -X\|_\alpha^{p} \|Y\|_\beta^{p}]^{\f 1p}                     \\
     & \lesssim \mathbb{E}[\|X^\epsilon -X\|_\alpha^{q}]^{\f 1q}\mathbb{E} [\|Y\|_\beta^{r}]^{\f 1r}
  \end{align*}
  for which the right-hand side converges to $0$ as $\epsilon \to 0$. The same argument applies to 
  the other Young integral components.
\end{proof}

Consequently, as we had already identified the limit of the path components
we conclude the proof of Lemma~\ref{lem:V-eps-conv} and thus, also the proof of Theorem~\ref{thm:VU-eps-conv}
by applying the above lemma taking $X^\epsilon = V^\epsilon$ and $Y = U$.

\section{Proof of the main theorem}\label{sec:proof-main}

\subsection{Convergence of \texorpdfstring{$V^\epsilon(x^\epsilon)$}{Vᵋ(xᵋ)} to an additive noise}

Finally, now that we have established the convergence of the rough path $\mathbf{W}^\epsilon$,
the convergence of $V^\epsilon(x^\epsilon)$ follows directly by an application of the continuity
theorem for the It\^o-Lyons map.

\begin{lemma}\label{lem:V-x-cvg}
  Denoting $V^\epsilon(x^\epsilon)$ as in Equation~\eqref{eq:V-epsilon-def},
  the joint law of $(x^\epsilon, V^\epsilon(x^\epsilon), B^H)$
  converges in $C^{H-} \oplus C^{\f 12 -} \oplus C^{H-}$ (equipped with the product topology) in the 
  $p$-Wasserstein distance to $(u^1, u^2, B^H) = (\bar x, \bar V(\bar x), B^H)$ where $u$ is the 
  solution of the RDE
  $$\dd u_t = \delta_1(u) \dd \mathbf{W}$$
  with $\mathbf{W}$ being the limit of $\mathbf{W}^\epsilon$ as in Theorem~\ref{thm:VU-eps-conv}
  and $\delta_1$ as defined by Equation~\eqref{eq:delta-one-def}.
\end{lemma}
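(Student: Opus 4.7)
The plan is to realize $(x^\epsilon, V^\epsilon(x^\epsilon))$ as the solution of an RDE driven by the enlarged rough path $\mathbf{W}^\epsilon$ constructed in Section~\ref{sec:construction-rp}, and to invoke the continuity of the Itô--Lyons solution map together with the convergence $\mathbf{W}^\epsilon \to \mathbf{W}$ established in Theorem~\ref{thm:VU-eps-conv}. As recalled in part \textbf{B} of the proof strategy, setting $u^\epsilon := (x^\epsilon, V^\epsilon(x^\epsilon))$ we have
\begin{equation*}
  du^\epsilon_t = \delta_1(u^\epsilon_t)\, d\mathbf{W}^\epsilon_t,
\end{equation*}
with $\delta_1$ defined by~\eqref{eq:delta-one-def}. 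Since $\delta_1$ is bounded with all spatial derivatives up to order three bounded in the operator norm (it is essentially linear in the $C_b^3$-argument), it falls within the scope of the Itô--Lyons theorem quoted earlier, now applied in the infinite-dimensional setting.

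First, I would use the Skorohod embedding theorem, justified by the $p$-Wasserstein convergence in Theorem~\ref{thm:VU-eps-conv}, to place the family $\{\mathbf{W}^\epsilon\}$ and $\mathbf{W}$ on a common probability space with $\mathbf{W}^\epsilon \to \mathbf{W}$ almost surely in $\mathcal{C}^\alpha$ for any $\alpha \in ((1-H) \vee \tfrac{1}{3}, \tfrac{1}{2})$. The deterministic continuity estimate of the Itô--Lyons solution map then yields almost sure convergence $u^\epsilon \to u$ in $C^\alpha$, where $u$ solves the limiting RDE $du = \delta_1(u)\, d\mathbf{W}$. Uniform $L^p$ bounds on $\mathbf{W}^\epsilon$—inherited from Proposition~\ref{prop:V-epsilon-conv} and from the explicit form $U_t = \bar f(\cdot)\, B^H_t$—combined with the Lipschitz-type contraction estimates of the Itô--Lyons theorem, upgrade almost sure convergence to convergence in $p$-Wasserstein distance.

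Next, I would identify the limit. By Lemma~\ref{lem:law-of-W}, the limiting rough path $\mathbf{W} = (V, U)$ has independent Gaussian components with $U_t = \bar f(\cdot)\, B^H_t$. Tracing through the limit RDE $du = \delta_1(u)\, d\mathbf{W}$ component by component, the first coordinate satisfies $du^1_t = \bar f(u^1_t)\, dB^H_t$, which is precisely the averaged equation~\eqref{eq:bar-x} with $\bar g \equiv 0$, so $u^1 = \bar x$ by uniqueness. The second coordinate then satisfies $du^2_t = dV_t(\bar x_t)$ in the Young sense (legitimate since $V \in C^{\tfrac{1}{2}-}$ in time and $\bar x \in C^{H-}$ with $H > \tfrac{1}{2}$), identifying $u^2 = V(\bar x)$. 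The $B^H$ coordinate rides along trivially: it is $\epsilon$-independent and can be viewed either as a continuous functional of the second rough path component (through $U = \bar f(\cdot)\, B^H$) or as an additional constant-in-$\epsilon$ coordinate adjoined to the system, so joint convergence of the triple is immediate.

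The main obstacle is reconciling the mixed regularity exponents demanded by the statement—$C^{H-}$ for $x^\epsilon$ and $B^H$, but only $C^{\tfrac{1}{2}-}$ for $V^\epsilon(x^\epsilon)$—since the unified RDE framework naturally delivers convergence at the common regularity $\alpha < \tfrac{1}{2}$ of both rough path components. To recover the sharper $C^{H-}$ topology for $x^\epsilon \to \bar x$, I would apply the Itô--Lyons theorem separately to the sub-equation $dx^\epsilon = \delta(x^\epsilon)\, d\mathbf{U}^\epsilon$, regarded as driven by $\mathbf{U}^\epsilon$ alone at its natural regularity $H-$; the convergence $\mathbf{U}^\epsilon \to \mathbf{U}$ in this finer topology was recorded in the remark following Proposition~\ref{prop:V-epsilon-conv}. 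Combining this sharper estimate for $x^\epsilon$ with the $C^{\tfrac{1}{2}-}$ estimate for $V^\epsilon(x^\epsilon)$ obtained above and the trivial convergence of $B^H$ delivers the claimed joint convergence in the product topology $C^{H-} \oplus C^{\tfrac{1}{2}-} \oplus C^{H-}$.
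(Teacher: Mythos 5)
Your proposal is correct and follows essentially the same route as the paper: both realize $(x^\epsilon, V^\epsilon(x^\epsilon))$ as the solution of $\dd u^\epsilon_t = \delta_1(u^\epsilon_t)\,\dd \mathbf{W}^\epsilon_t$, conclude from Theorem~\ref{thm:VU-eps-conv} via continuity of the Itô--Lyons map, identify the limit components as $(\bar x, V(\bar x))$, and recover the joint convergence with $B^H$ from the fact that $B^H$ is determined by $U$. The only differences are in packaging: the paper upgrades to $p$-Wasserstein convergence directly through the coupling-based stability estimate of Lemma~\ref{thm:stability-rde} rather than via Skorokhod embedding plus uniform integrability, and it leaves implicit the point you treat explicitly, namely that the $C^{H-}$ convergence of the first component is obtained from the sub-equation driven by $\mathbf{U}^\epsilon$ at its natural regularity.
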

\begin{proof}
  Denoting $v^\epsilon = V^\epsilon( x^\epsilon)$. Then $(x^\epsilon, v^\epsilon)$ satisfies the system of RDEs
  $$\begin{cases}
      \dd x^\epsilon_t & = f(x^\epsilon_t, Y_t^\epsilon) \dd B^H_t = \delta( x^\epsilon) \dd U^\epsilon, \\
      \dd v^\epsilon_t & =  \delta( x^\epsilon) \dd V^\epsilon.
    \end{cases}$$
  Denote $u^\epsilon=(x^\epsilon, v^\epsilon)$. The system of equations can be rewritten as:
  $$\dd u_t^\epsilon = \delta_1(u^\epsilon_t) \dd \mathbf{W}^\epsilon,$$
  By Theorem~\ref{thm:VU-eps-conv} and the
  stability of RDEs in the Wasserstein distance (c.f. Theorem~\ref{thm:stability-rde}), it
  follows that $u^\epsilon$ converges in the Wasserstein distance to
  $u = (\bar x, V(\bar x))$ as $\epsilon \to 0$.
  Finally, to conclude the convergence of the joint law with $B^H$, it suffices to observe that
  $B^H$ and $U$ generate the same $\sigma$-algebra.
\end{proof}

\begin{corollary}\label{cor:V-eps-bdd}
  For any $\alpha < \f 12$, we have
  $$\sup_{\epsilon}\mathbb{E}[\|V^\epsilon( x^\epsilon)\|_{\alpha}^p] < \infty.$$
\end{corollary}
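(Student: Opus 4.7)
The plan is to derive this uniform moment bound directly from Lemma~\ref{lem:V-x-cvg}, which already asserts that $V^\epsilon(x^\epsilon) \to V(\bar x)$ in the $p$-Wasserstein distance over $C^\alpha$ for every $\alpha < \tfrac12$ and every $p \ge 1$. By optimally coupling and applying the triangle inequality in $L^p$, one has
\begin{equation*}
\|V^\epsilon(x^\epsilon)\|_{L^p C^\alpha} \le \mathcal{W}^p_\alpha\bigl(V^\epsilon(x^\epsilon), V(\bar x)\bigr) + \|V(\bar x)\|_{L^p C^\alpha}.
\end{equation*}
Since the first term on the right-hand side vanishes as $\epsilon \to 0$, the corollary reduces to checking that $V(\bar x)$ itself belongs to $L^p(\Omega; C^\alpha)$; the sequence on the left is then automatically uniformly bounded in $\epsilon$.

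To verify $\|V(\bar x)\|_{L^p C^\alpha} < \infty$, I would split
\begin{equation*}
V(\bar x)_t - V(\bar x)_s = V_{s,t}(\bar x_t) + \bigl(V_s(\bar x_t) - V_s(\bar x_s)\bigr),
\end{equation*}
bounding the first term by $\|V\|_{C^\alpha(C_b^0)}\,|t-s|^\alpha$ and the second by $\|V\|_{C^0(C_b^1)}\,\|\bar x\|_{C^H}\,|t-s|^H$. Since $H > \alpha$ this yields $\|V(\bar x)\|_\alpha \lesssim \|V\|_{C^\alpha(C_b^1)}(1 + \|\bar x\|_{C^H})$. By Proposition~\ref{prop:def-Veps}, $V$ is a centered Gaussian element of the separable little-Hölder subspace of $C^\alpha([0,T]; C^3_b)$, so Fernique's theorem yields finite $L^p$ moments of $\|V\|_{C^\alpha(C_b^1)}$; uniform Hölder moments of $\bar x$ are standard for the Young SDE~\eqref{eq:bar-x}.

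An alternative and equally short route bypasses the Wasserstein characterization and exploits directly the RDE representation derived in Lemma~\ref{lem:V-x-cvg}: the pair $(x^\epsilon, V^\epsilon(x^\epsilon))$ solves $\dd u^\epsilon = \delta_1(u^\epsilon)\,\dd \mathbf{W}^\epsilon$, so the standard a priori estimate for RDEs driven by a $C^3_b$ vector field gives $\|V^\epsilon(x^\epsilon)\|_\alpha \lesssim \Phi(\|\mathbf{W}^\epsilon\|_{\CC^\alpha})$ for some polynomially growing $\Phi$; the uniform $L^p$ control on $\|\mathbf{W}^\epsilon\|_{\CC^\alpha}$ coming from its $p$-Wasserstein convergence in Theorem~\ref{thm:VU-eps-conv} then closes the argument. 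No serious obstacle is anticipated, since the substantive work was done in establishing the rough-path-level Wasserstein convergence; the present corollary is in essence the observation that $p$-Wasserstein convergence is equivalent to weak convergence combined with convergence of the $p$-th moments.
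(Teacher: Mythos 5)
Your first route is exactly the paper's own proof: the corollary is deduced from the $p$-Wasserstein convergence of $V^\epsilon(x^\epsilon)$ to $V(\bar x)$ in $C^\alpha$ (Lemma~\ref{lem:V-x-cvg}) together with the triangle inequality under an (almost) optimal coupling, reducing the claim to the finiteness of $\mathbb{E}[\|V(\bar x)\|_\alpha^p]$. Your additional verification that $V(\bar x)\in L^p(\Omega;C^\alpha)$ (via the increment splitting and Fernique) fills in a step the paper leaves implicit, and the alternative RDE a priori estimate is a valid second route, but the core argument coincides with the paper's.
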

\begin{proof}
  This follows from the convergence of $v^\epsilon$ in the $p$-Wasserstein distance to $V(\bar{x})$, 
  from which, for sufficiently small $\epsilon$, we have
  $$\mathbb{E}[\|V^\epsilon(x^\epsilon)\|_\alpha^p] \le \mathbb{E}[\|V(\bar{x})\|_\alpha^p] +
    \inf_{\Pi(V^\epsilon(x^\epsilon), V(\bar{x}))}\mathbb{E}[\|V^\epsilon( x^\epsilon) - V(\bar{x})\|_\alpha^p]
    \le \mathbb{E}[\|V(\bar{x})\|_\alpha^p] + 1$$
  where the infimum is taken over all couplings of $V^\epsilon(x^\epsilon)$ and $V(\bar{x})$.
\end{proof}

\subsection{A linear Residue lemma for controlled YDE}
We provide the following Gr\"onwall type lemma from which we deduce the aforementioned Residue lemma.

\begin{lemma}\label{lem:gronwall}
  Let $U, V$ be Banach spaces and suppose $f \in C^\alpha([0, T], V)$, $X \in C^\beta([0, T], U)$ and
  $A \in C^\gamma([0, T], \mathbb{L}(V, \mathbb{L}(U, V)))$ for some
  $\beta \in (\frac{1}{2}, 1]$, $0 < \alpha \le \gamma < 1$ and $\alpha + \beta > 1$. Then, if $Y : [0, T] \to V$
  satisfies the controlled YDE
  \begin{equation}\label{eq:gronwall-YDE}
    Y_t = Y_0 + \int_0^t A_s Y_s \dd X_s + f_t.
  \end{equation}
  Then,
  \begin{equation}\label{eq:infty-norm-est}
    \|Y\|_{\infty; [0, T]} \le C_1 e^{C_2(\|A\|_\gamma^{\frac{1}{\gamma}} + \|X\|_\beta^{\frac{1}{\beta}})}
    \left(\|Y_0\| + \|f\|_\alpha\right)
  \end{equation}
  and
  \begin{equation}\label{eq:beta-norm-est}
    \|Y\|_{\alpha; [0, T]} \le C_3 \|X\|_\beta^{\frac{1}{\beta}}((\|X\|_\beta + \|A\|_\gamma) \|Y\|_{\infty; [0, T]} + \|f\|_\alpha)
  \end{equation}
  for some constants $C_1, C_2, C_3 > 0$ depending on $\alpha, \beta, \gamma, T$ and $\|A\|_\infty$.
\end{lemma}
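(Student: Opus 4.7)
My plan is to derive a local Young-integration estimate on small subintervals, absorb the $\|Y\|_\alpha$-term to the left by taking the mesh size $h$ sufficiently small, and then iterate a discrete Grönwall step across $\lceil T/h \rceil$ subintervals to obtain the exponential bound \eqref{eq:infty-norm-est}.

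For the first step, applying the standard Young integral inequality to the integral in \eqref{eq:gronwall-YDE} on any subinterval $[s,t] \subseteq [0,T]$ with $h=t-s$ yields
\[
\|Y_{s,t}\| \le \|A\|_\infty \|Y\|_{\infty,[s,t]} \|X\|_\beta h^\beta + C_{\mathrm{Y}} \|AY\|_{\alpha,[s,t]} \|X\|_\beta h^{\alpha+\beta} + \|f\|_\alpha h^\alpha,
\]
together with the Leibniz-type product estimate $\|AY\|_{\alpha,[s,t]} \le \|A\|_\gamma h^{\gamma-\alpha} \|Y\|_{\infty,[s,t]} + \|A\|_\infty \|Y\|_{\alpha,[s,t]}$. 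Dividing by $h^\alpha$ and taking the supremum over ordered pairs in $[a,b]$ produces a local bound on $\|Y\|_{\alpha,[a,b]}$ whose right-hand side carries a term $C\|A\|_\infty \|X\|_\beta h^\beta \|Y\|_{\alpha,[a,b]}$; I absorb this to the left upon choosing $h$ small enough. Combining with the elementary inequality $\|Y\|_{\infty,[a,b]} \le \|Y_a\| + h^\alpha \|Y\|_{\alpha,[a,b]}$ then produces a sup-norm estimate of the form
\[
\|Y\|_{\infty,[a,b]} \le \|Y_a\| + C h^\alpha \|f\|_\alpha + C \bigl(\|X\|_\beta h^\beta + \|A\|_\gamma \|X\|_\beta h^{\beta+\gamma}\bigr)\|Y\|_{\infty,[a,b]}.
\]

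The critical choice is $h \sim (\|A\|_\gamma^{1/\gamma} + \|X\|_\beta^{1/\beta})^{-1}$, with the $\|A\|_\infty$-dependence absorbed into the constants, which makes both $\|X\|_\beta h^\beta$ and $\|A\|_\gamma h^\gamma$ of controlled size and hence the $\|Y\|_{\infty,[a,b]}$ coefficient on the right at most $1/2$. This yields $\|Y\|_{\infty,[a,b]} \le 2\|Y_a\| + C h^\alpha \|f\|_\alpha$. Iterating along a partition of $[0,T]$ with mesh $h$, which comprises $N \lesssim T(\|A\|_\gamma^{1/\gamma} + \|X\|_\beta^{1/\beta})$ subintervals, a discrete Grönwall step gives $\|Y\|_{\infty,[0,T]} \le 2^N(\|Y_0\| + C T^\alpha \|f\|_\alpha)$, which is exactly \eqref{eq:infty-norm-est} after writing $2^N = e^{N \log 2}$.

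For the Hölder estimate \eqref{eq:beta-norm-est} I substitute the sup-norm bound from the previous paragraph back into the local Hölder estimate and patch the pieces into a global Hölder norm via the elementary inequality $\|Y\|_{\alpha,[0,T]} \le 2 \max_i \|Y\|_{\alpha,[a_i,a_{i+1}]} + 2 h^{-\alpha} \|Y\|_{\infty,[0,T]}$ valid on any partition of mesh $h$. The main obstacle I anticipate is the bookkeeping of constants: the mesh $h$ must be chosen so that the absorption on one side and the subinterval count on the other conspire to give precisely the exponent $\|A\|_\gamma^{1/\gamma}+\|X\|_\beta^{1/\beta}$ in the Grönwall factor, and so that the prefactors in \eqref{eq:beta-norm-est} emerge in the claimed form, while keeping all auxiliary $T$- and $\|A\|_\infty$-dependence tucked inside the constants $C_1,C_2,C_3$.
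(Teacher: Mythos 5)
Your proposal follows essentially the same route as the paper's proof: a local Young estimate combined with the Leibniz bound on $\|AY\|_{\alpha,[s,t]}$, absorption of the Hölder term by shrinking the mesh, the elementary inequality $\|Y\|_{\infty,[a,b]}\le\|Y_a\|+h^\alpha\|Y\|_{\alpha,[a,b]}$, and iteration over a partition of mesh $h\sim(\|A\|_\gamma^{1/\gamma}+\|X\|_\beta^{1/\beta})^{-1}$ yielding the factor $2^N$. The argument is correct and matches the paper's in all essentials, including the patching of local Hölder norms into the global one for \eqref{eq:beta-norm-est}.
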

\begin{remark}
  We remark that, by applying the estimate~\eqref{eq:infty-norm-est} within \eqref{eq:beta-norm-est}
  and by enlarging the constants $C_1$ and $C_2$ within the estimate~\eqref{eq:infty-norm-est}, the terms
  which depend polynomially on $\|X\|_\beta$ and $\|A\|_\gamma$ can be absorbed into an exponential.
  Namely, we obtain an estimate of the form
  \begin{equation}\label{eq:gronwal-rmk}
    \|Y\|_{\alpha; [0, T]} \le C_1' e^{C_2'(\|A\|_\gamma^{\frac{1}{\gamma}} + \|X\|_\beta^{\frac{1}{\beta}})}
    \left(\|Y_0\| + \|f\|_\alpha\right).
  \end{equation}
\end{remark}
\begin{proof}
  As a consequence of the Young's inequality, for any $s < t \in [0, T]$ with $|t - s| \le 1$, we compute
  \begin{equation}
    \begin{split}
      \|Y_{s, t}\| & \le \|f_{s, t}\| + \|A_s Y_s X_{s, t}\| + \left\|\int_s^t A_r Y_r \dd X_r - A_s Y_s X_{s, t}\right\| \\
                   & \le \|f\|_\alpha |t - s|^\alpha + \|A\|_\infty \|Y\|_\infty \|X\|_\beta|t - s|^\beta +
      C\|A_\cdot Y_\cdot\|_{\alpha; [s, t]} \|X\|_\beta |t - s|^{\alpha + \beta}
    \end{split}
  \end{equation}
  Now, as
  \begin{equation}\label{eq:AY-beta-est}
    \|A_\cdot Y_\cdot\|_{\alpha; [s, t]} \le \|A\|_{\gamma} \|Y\|_\infty |t - s|^{\gamma - \alpha} + \|A\|_\infty \|Y\|_\alpha,
  \end{equation}
  by taking
  $|t - s|^\beta \le (2C\|A\|_\infty\|X\|_\beta)^{-1}$, we have that
  \begin{equation}\label{eq:Y-young-ineq}
    \|Y\|_{\alpha; [s, t]} \le 2\|f\|_\alpha + \left(2\|A\|_\infty \|X\|_\beta|t - s|^{\beta - \alpha} +
    \|A\|_\infty^{-1} \|A\|_{\gamma}|t - s|^{\gamma - \alpha}\right) \|Y\|_{\infty; [s, t]},
  \end{equation}
  Thus, by leveraging subadditivity of the H\"older norm in terms of the intervals
  (noting that we had assumed $|t - s| \le 1$), we obtain the estimate~\eqref{eq:beta-norm-est}.
  On the other hand,
  \begin{align*}
          & \|Y\|_{\infty; [s, t]} \le \|Y_s\| + \|Y\|_{\alpha; [s, t]}|t - s|^\alpha                \\
    \le\  & \|Y_s\| + 2\|f\|_\alpha|t - s|^\alpha + \left(2\|A\|_\infty \|X\|_\beta|t - s|^{\beta} +
    \|A\|_\infty^{-1} \|A\|_{\gamma} |t - s|^\gamma\right) \|Y\|_{\infty; [s, t]}                    \\
    \le\  & \|Y_s\| + 2\|f\|_\alpha|t - s|^\alpha
    + \left(C^{-1} + \|A\|_\infty^{-1} \|A\|_{\gamma} |t - s|^\gamma\right) \|Y\|_{\infty; [s, t]}
  \end{align*}
  Consequently, taking $|t - s|$ sufficiently small so that
  $|t - s|^{\gamma} \le \left(\frac{1}{2} - C^{-1}\right)\|A\|_\infty \|A\|_\gamma^{-1} =: c_3 \|A\|_\gamma^{-1}$
  (where we note that $\frac{1}{2} - C^{-1} > 0$ as tracking the constant in the Young's inequality
  $C = 2^{\alpha + \beta} \zeta(\alpha + \beta) > 2$),
  we obtain the estimate (recalling that we had assumed that $|t - s| < 1$),
  $$\|Y\|_{\infty; [s, t]} \le 2\|Y_s\| + 4\|f\|_\alpha$$
  Hence, by taking an uniform partition $(t_n)_n$ of $[0, T]$ with mesh size
  $$|t_{n + 1} - t_n| = (c_3^{-1} \|A\|_\gamma)^{-\frac{1}{\gamma}} \wedge (2C\|A\|_\infty \|X\|_\beta)^{-\frac{1}{\beta}}$$
  so that there are at most
  $(2C\|A\|_\infty)^{\frac{1}{\beta}} T \|X\|_\beta^{\frac{1}{\beta}}
    + {c_3}^{-\frac{1}{\gamma}} T \|A\|_\gamma^{\frac{1}{\gamma}} + 1 =:
    c_1 \|X\|_\beta^{\frac{1}{\beta}} + c_2\|A\|_\gamma^{\frac{1}{\gamma}} + 1$
  number of intervals in the partition. Hence, by iterating the above estimate, we obtain
  \begin{align*}
    \|Y\|_\infty & \lesssim 2^{c_1 \|X\|_\beta^{\frac{1}{\beta}} + c_2\|A\|_{\gamma}^{\frac{1}{\gamma}}}\|Y_0\|
    + 4 \|f\|_\alpha
    \left(1 + 2 + 2^2 + \cdots + 2^{c_1\|X\|_\beta^{\frac{1}{\beta}} + c_2\|A\|_\gamma^{\frac{1}{\gamma}} + 1}\right)                      \\
                 & \lesssim 2^{c_1\|X\|_\beta^{\frac{1}{\beta}} + c_2\|A\|_{\gamma}^{\frac{1}{\gamma}}}\left(\|Y_0\| + \|f\|_\alpha\right)
  \end{align*}
  proving \eqref{eq:infty-norm-est}.
\end{proof}

\begin{lemma}\label{lem:residue-yde}
  Taking $\alpha, \beta, \gamma, U, V$ as in Lemma~\ref{lem:gronwall},
  suppose $Z, \tilde Z \in C^\alpha([0, T], V)$, $X, \tilde X \in C^\beta([0, T], U)$
  $A, \tilde A \in C^\gamma([0, T], \mathbb{L}(V, \mathbb{L}(U, V)))$
  and $z, \tilde z : [0, T] \to V$ satisfying the controlled YDE
  \begin{equation}
    z_t = \int_0^t A_s z_s \dd X_s + Z_t, \quad \tilde z_t = \int_0^t \tilde A_s \tilde z_s \dd \tilde X_s + \tilde Z_t
  \end{equation}
  respectively. Then, denoting $\Delta g = g - \tilde g$ for any function $g$, we have
  $$\|z - \tilde z\|_\alpha \le C_1
    e^{C_2 (\|A\|_\gamma^{\frac{1}{\gamma}} + \|X\|_\beta^{\frac{1}{\beta}}
        + \|\tilde A\|_\gamma^{\frac{1}{\gamma}} + \|\tilde X\|_\beta^{\frac{1}{\beta}})}
    \left(\|\Delta Z_0\| + \|\Delta Z\|_\alpha + \|\Delta X\|_\beta
    + \|\Delta A\|_\gamma\right)$$
  where $C_1, C_2 > 0$ are constants depending on $\alpha, \beta, \gamma, T,
    \|\tilde Z_0\|, \|\tilde Z\|_\alpha, \|A\|_\infty$ and $\|\tilde A\|_\infty$.
\end{lemma}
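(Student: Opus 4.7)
The plan is to convert the statement into an application of Lemma \ref{lem:gronwall} applied to the difference $Y_t := z_t - \tilde z_t$. By the ``add-and-subtract'' trick, writing $\Delta g := g - \tilde g$ throughout, one has
\begin{equation*}
  Y_t = \Delta Z_t + \int_0^t A_s Y_s \,\dd X_s + \int_0^t \Delta A_s\, \tilde z_s \,\dd X_s + \int_0^t \tilde A_s\, \tilde z_s \,\dd \Delta X_s,
\end{equation*}
which puts $Y$ in the framework of \eqref{eq:gronwall-YDE}: driver $X$, linear coefficient $A$, initial value $\Delta Z_0$, and forcing
\begin{equation*}
  F_t := \Delta Z_t + \int_0^t \Delta A_s\, \tilde z_s \,\dd X_s + \int_0^t \tilde A_s\, \tilde z_s \,\dd \Delta X_s .
\end{equation*}
All three integrals here are genuine Young integrals under the assumption $\alpha+\beta>1$, so there is no rough-path issue.

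The second step is to control $\|F\|_\alpha$. The first summand contributes $\|\Delta Z\|_\alpha$ directly. For the two Young-integral terms, I would apply the Young estimate $\|\int_0^\cdot g_s \dd \xi_s\|_\beta \lesssim (\|g\|_\infty + \|g\|_\alpha T^{\alpha})\|\xi\|_\beta$ together with the product estimate
\begin{equation*}
  \|g\cdot h\|_\alpha \le \|g\|_\infty \|h\|_\alpha + \|g\|_\gamma \|h\|_\infty \,T^{\gamma-\alpha},
\end{equation*}
applied once with $(g,h) = (\Delta A, \tilde z)$ against $X$, and once with $(g,h) = (\tilde A, \tilde z)$ against $\Delta X$. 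This yields
\begin{equation*}
  \|F\|_\alpha \lesssim \|\Delta Z\|_\alpha + P(\|\tilde z\|_\infty, \|\tilde z\|_\alpha, \|\tilde A\|_\gamma, \|A\|_\infty,\|\tilde A\|_\infty)\bigl(\|X\|_\beta \|\Delta A\|_\gamma + \|\Delta X\|_\beta\bigr),
\end{equation*}
where $P$ is a polynomial. The bounds on $\|\tilde z\|_\infty$ and $\|\tilde z\|_\alpha$ are supplied by applying Lemma \ref{lem:gronwall} (more precisely its combined form \eqref{eq:gronwal-rmk}) to the YDE satisfied by $\tilde z$, giving an exponential bound
\begin{equation*}
  \|\tilde z\|_\infty + \|\tilde z\|_\alpha \lesssim e^{C(\|\tilde A\|_\gamma^{1/\gamma} + \|\tilde X\|_\beta^{1/\beta})}(\|\tilde Z_0\| + \|\tilde Z\|_\alpha).
\end{equation*}

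The final step is to plug $\|F\|_\alpha$ into \eqref{eq:gronwal-rmk} applied to the equation for $Y$, producing an exponential factor $e^{C(\|A\|_\gamma^{1/\gamma}+\|X\|_\beta^{1/\beta})}$ multiplied by $\|Y_0\|+\|F\|_\alpha$. Combining with the bound for $\|F\|_\alpha$, the polynomial prefactors in $\|A\|_\gamma,\|\tilde A\|_\gamma,\|X\|_\beta,\|\tilde X\|_\beta$ are absorbed into a single exponential in $\|A\|_\gamma^{1/\gamma}+\|\tilde A\|_\gamma^{1/\gamma}+\|X\|_\beta^{1/\beta}+\|\tilde X\|_\beta^{1/\beta}$, and the dependence on $\|\tilde Z_0\|,\|\tilde Z\|_\alpha,\|A\|_\infty,\|\tilde A\|_\infty$ is moved into the constants $C_1,C_2$. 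This delivers exactly the stated estimate.

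The main obstacle is the bookkeeping of the Hölder seminorms across the two nested applications of Lemma \ref{lem:gronwall}, and in particular verifying that all factors that \emph{do not} vanish in the limit can be packaged into an exponential of the form $\|\cdot\|_\gamma^{1/\gamma}+\|\cdot\|_\beta^{1/\beta}$. One has to be a bit careful with the product estimate for $\Delta A \cdot \tilde z$ (noting $\alpha\le \gamma$, so this product sits in $C^\alpha$) and with the fact that both Young-integral terms start at $0$, so they are controlled in the $C^\beta$-seminorm and then re-read as $C^\alpha$ via $\|\cdot\|_\alpha \le T^{\beta-\alpha}\|\cdot\|_\beta$ when $\beta\ge\alpha$, else directly bounded because $\alpha+\beta>1$ forces $\beta$ to be the dominant exponent for the integral output.
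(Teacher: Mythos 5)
Your proposal is correct and follows essentially the same route as the paper: the identical decomposition of $Y=z-\tilde z$ into a linear YDE driven by $X$ with coefficient $A$ and forcing $F_t=\Delta Z_t+\int_0^t\Delta A_s\,\tilde z_s\,\dd X_s+\int_0^t\tilde A_s\,\tilde z_s\,\dd\Delta X_s$, the same Young/product estimates for $\|F\|_\alpha$, the same a priori bound on $\|\tilde z\|_\infty,\|\tilde z\|_\alpha$ via Lemma~\ref{lem:gronwall} and \eqref{eq:gronwal-rmk}, and the same absorption of polynomial prefactors into the exponential. No substantive difference.
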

We remark that, tracking the constant carefully, one observes that the constant $C_1$ depends only
linearly on $\|\tilde Z_0\|, \|\tilde Z\|_\alpha$ while $C_2$ does not depend on
$\|\tilde Z_0\|, \|\tilde Z\|_\alpha$ at all. This is useful should we wish to obtain a moment
bound on $z - \tilde z$.
\begin{proof}
  Setting $Y_t := z_t - \tilde z_t$, we have that
  $$\dd Y_t = A_t Y_t \dd X_t + \dd (Z - \tilde Z)_t - \tilde A_t \tilde z_t \dd \tilde X_t + A_t \tilde z_t \dd X_t.$$
  Thus, applying Lemma~\ref{lem:gronwall} to the above equation with the forcing
  $$f_t := Z_t - \tilde Z_t - \int_0^t \tilde A_t \tilde z_t \dd \tilde X_t + \int_0^t A_t \tilde z_t \dd X_t,$$
  we obtain that
  \begin{equation}\label{eq:delta-z-est}
    \|z - \tilde z\|_\alpha \lesssim e^{C (\|A\|_\gamma^{\frac{1}{\gamma}} + \|X\|_\beta^{\frac{1}{\beta}})}
    \left(\|\Delta Z_0\| + \|f\|_\alpha\right),
  \end{equation}
  and it remains to estimate $\|f\|_\alpha$. Applying Young's inequality, we obtain for any
  $s < t \in [0, T]$,
  \begin{align*}
    \|f_{s, t}\| \le \|\Delta Z_{s, t}\| &
    + \left\|\int_s^t \tilde A_r \tilde z_r \dd (\bar X - X)_r\right\|
    + \left\|\int_s^t (\tilde A_r - A_r) \tilde z_r \dd X_r\right\|                                                 \\
    \lesssim \|\Delta Z_{s, t}\|         &
    + \|\tilde A_{\cdot} \tilde z_{\cdot}\|_\infty\|\bar X_{s, t} - X_{s, t}\|
    + \|\tilde A_{\cdot} \tilde z_{\cdot}\|_\alpha \|\bar X - X\|_\beta |t - s|^{\alpha + \beta}                    \\
                                         & + \|(\tilde A_{\cdot} - A_{\cdot}) \tilde z_{\cdot}\|_\infty\|X_{s, t}\|
    + \|(\tilde A_{\cdot} - A_{\cdot}) \tilde z_{\cdot}\|_\alpha \|X\|_\beta |t - s|^{\alpha + \beta}.
  \end{align*}
  Thus, we find
  \begin{align*}
    \|f\|_{\alpha} \le \|\Delta Z\|_\alpha
     & + \|\tilde A\|_\infty \|\tilde z\|_\infty \|\Delta X\|_\beta T^{\beta - \alpha}
    + \|\tilde A_{\cdot} \tilde z_{\cdot}\|_\alpha \|\Delta X\|_\beta T^\beta               \\
     & + \|\Delta A\|_\infty \|\tilde z\|_\infty \|X\|_\beta T^{\beta - \alpha}
    + \|\Delta A_{\cdot} \tilde z_{\cdot}\|_\alpha \|X\|_\beta T^\beta                      \\
    \lesssim \|\Delta Z\|_\alpha
     & + (\|\tilde A\|_\infty \|\tilde z\|_\infty + \|\tilde A\|_\gamma \|\tilde z\|_\infty
    + \|\tilde A\|_\infty \|\tilde z\|_\alpha) \|\Delta X\|_\beta                           \\
     & + (\|\Delta A\|_\infty \|\tilde z\|_\infty + \|\Delta A\|_\gamma \|\tilde z\|_\infty
    + \|\Delta A\|_\infty \|\tilde z\|_\alpha) \|X\|_\beta.
  \end{align*}
  Now, bounding $\|\tilde z\|_\infty \vee \|\tilde z\|_\alpha$ by Lemma~\ref{lem:gronwall} (and
  its subsequent remark \eqref{eq:gronwal-rmk}), and by absorbing relevant terms of
  polynomial order involving $\|\tilde A\|_\gamma$ and $\|\tilde X\|_\beta$ into the exponential,
  we obtain the estimate
  \begin{align*}
    \|f\|_{\alpha} & \lesssim \|\Delta Z\|_\alpha +
    (\|\Delta X\|_\alpha
    + \|\Delta A\|_\gamma)
    e^{C'(\|\tilde A\|_\gamma^{\frac{1}{\gamma}} + \|\tilde X\|_\beta^{\frac{1}{\beta}})}
    \left(\|\tilde Z_0\| + \|\tilde Z\|_\beta\right)
  \end{align*}
  for some constant $0 < C'$.
  Consequently, substituting the above into \eqref{eq:delta-z-est}, we obtain the desired inequality.
\end{proof}

\subsection{Proof of Theorem~\ref{thm:main}}\label{subsec:proof-main-theorem}

In this section we complete the proof of Theorem~\ref{thm:main} by
applying the Residue Lemma \ref{lem:residue-yde} to the following decomposition of $z^\epsilon$
\begin{align}\label{eq:z-first-order-taylor-expanded}
  z_t^\epsilon = V^\epsilon(x^\epsilon)_t +
  \int_0^t \left(\int_0^1 D\bar f (\theta x^\epsilon_r + (1 - \theta)\bar x_r) \dd \theta\right) z^\epsilon_r \dd B^H_r.
\end{align}
Before conducting this argument however, we first
settle the issue of well-posedness for the limiting equation \eqref{eq:z-lim-eq} by a simple fixed point argument.

\begin{lemma}\label{lem:well-posed}
  Let $T > 0$, $U, V$ be Banach spaces, $\alpha \in (\frac{1}{2}, 1]$, $0 < \beta \le \gamma < 1$ 
  and $\alpha + \beta > 1$. Then, taking
  $$X \in C^\alpha([0, T], U), \qquad f \in C^\beta([0, T], V), 
    \qquad A \in C^\gamma([0, T], \mathbb{L}(V, \mathbb{L}(U, V)))$$
  the controlled YDE
  \begin{equation}\label{eq:z-well-posed}
    z_t = z_0 + \int_0^t A_s z_s \dd X_s + f_t
  \end{equation}
  has a unique solution $z \in C^{\beta}([0, T], V)$.
\end{lemma}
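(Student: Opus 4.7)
The plan is to establish existence by a Picard iteration on a short interval, extend to $[0,T]$ by iterating the initial condition, and deduce uniqueness directly from the Residue Lemma~\ref{lem:residue-yde}. Fix a small $\tau \in (0,T]$ to be chosen and work on the complete affine subspace $\mathcal{X}_\tau := \{z \in C^\beta([0,\tau], V) : z(0) = z_0\}$, equipped with the seminorm $\|\cdot\|_{\beta;[0,\tau]}$. Define the Picard operator
$$\Phi(z)_t := z_0 + \int_0^t A_s z_s \dd X_s + f_t.$$
Because $\beta \le \gamma$, the path $s \mapsto A_s z_s$ is $\beta$-Hölder, and since $\alpha + \beta > 1$ the Young integral is well defined; standard Young estimates together with the bound $\|Az\|_\beta \lesssim \|A\|_\gamma \|z\|_\infty + \|A\|_\infty \|z\|_\beta$ show that $\Phi$ preserves $\mathcal{X}_\tau$.

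For the contraction, set $y := z - \tilde z$, so that $y_0 = 0$ and $\|y\|_\infty \le \tau^\beta \|y\|_\beta$. Combining Young's inequality with the bound $\|Ay\|_\beta \le \|A\|_\infty \|y\|_\beta + T^{\gamma - \beta} \|A\|_\gamma \|y\|_\infty$ and balancing powers of $\tau$ yields an estimate of the form
$$\|\Phi(z) - \Phi(\tilde z)\|_{\beta;[0,\tau]} \le C(\|A\|_\gamma, \|X\|_\alpha, T)\, \tau^\alpha \|z - \tilde z\|_{\beta;[0,\tau]}.$$
The crucial point is that the right-hand side is independent of $z_0$ and $f$. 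Hence, for $\tau$ small enough depending only on $\|A\|_\gamma$, $\|A\|_\infty$ and $\|X\|_\alpha$, the map $\Phi$ is a $\tfrac12$-contraction and Banach's fixed-point theorem produces a unique local solution on $[0,\tau]$.

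To extend to $[0,T]$ I would iterate: since the critical $\tau$ does not depend on the initial value, one solves successively on intervals $[k\tau, (k+1)\tau] \cap [0,T]$, taking $z_{k\tau}$ as the new initial condition for the next piece; finitely many such steps cover $[0,T]$ and concatenation of $\beta$-Hölder pieces yields a global solution in $C^\beta([0,T], V)$. Uniqueness on $[0,T]$ then follows immediately from Lemma~\ref{lem:residue-yde} applied to two solutions $z, \tilde z$ with identical data: one sets $\tilde A = A$, $\tilde X = X$ and $\tilde Z_t = Z_t = z_0 + f_t$, so that $\Delta Z_0, \Delta Z, \Delta X$ and $\Delta A$ all vanish, forcing $\|z - \tilde z\|_\beta = 0$.

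The only real obstacle is ensuring that the contraction radius $\tau$ is uniform across the iteration, so that patching reaches $T$ in finitely many pieces. This is automatic here because $\Phi(z) - \Phi(\tilde z)$ eliminates both the initial condition $z_0$ and the forcing $f$, leaving a contraction constant that depends only on the fixed global quantities $\|A\|_\gamma$, $\|A\|_\infty$ and $\|X\|_\alpha$; no a priori growth estimate on $z$ itself is needed for well-posedness, though of course the bound provided by Lemma~\ref{lem:gronwall} would give an alternative route via an a priori $C^\beta$ estimate and a compactness/continuity argument.
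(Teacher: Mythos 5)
Your proposal is correct and follows essentially the same route as the paper: a Picard contraction for $\Phi(z)_t = z_0 + \int_0^t A_s z_s\,\dd X_s + f_t$ on a short interval whose length depends only on $\|A\|$ and $\|X\|_\alpha$ (not on $z_0$ or $f$), followed by patching finitely many subintervals. The paper's proof is exactly this (it even calls the extension a "bootstrapping argument"); your extra appeal to Lemma~\ref{lem:residue-yde} for global uniqueness is a harmless alternative to noting that uniqueness already propagates from the contraction on each subinterval.
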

\begin{proof}
  Defining the map $\Phi : C^\beta([0, t], V) \to C^\beta([0, t], V)$ by
  $$\Phi(z)_t := z_0 + \int_0^t A_s z_s \dd X_s + f_t,$$
  we compute for $z, z' \in C^\beta([0, T], V)$ that
  $$\|\Phi(z) - \Phi(z')\|_\beta \lesssim \|X\|_\alpha \|z - z'\|_\beta (t^\alpha + t^{\alpha + \gamma}).$$
  Hence, by taking $t$ small enough, we can ensure that $\Phi$ is a strict contraction on $C^\beta([0, t], V)$.
  Thus, we deduce the existence of solution to \eqref{eq:z-well-posed} by a simple bootstrapping argument.
\end{proof}

\begin{remark}\label{rem:weak-uniqueness}
  We remark that, as the above proof demonstrates that a Picard iteration will converge to the solution of
  \eqref{eq:z-well-posed}, in the case where $f, A$ and $X$ are random, we also obtain weak uniqueness
  of the solutions by realizing that at each step, the Picard iterations will have the same laws.
\end{remark}

We are now in a position to complete the proof of
Theorem~\ref{thm:main} by comparing the decomposition~\eqref{eq:z-first-order-taylor-expanded} with
the limiting equation~\eqref{eq:z-lim-eq}.

Since Lemma~\ref{lem:residue-yde} provides a path-wise estimate while the terms within the equations 
converge only in law, we apply the Skorokhod embedding theorem (c.f. \cite{Skorokhod:56}) to obtain 
a common probability space for which the convergence holds almost surely. Namely, as 
Lemma~\ref{lem:V-x-cvg} shows that $(x^\epsilon, V^\epsilon(x^\epsilon), B^H)$ converges in law to
$(\bar x, V(\bar x), B^H)$, by applying the embedding theorem we obtain a probability space and random 
variables on which
$$(\tilde x^\epsilon, \tilde{v}^\epsilon, \tilde{B}^{H, \epsilon}) \stackrel{(d)}{=}
  (x^\epsilon, V^\epsilon(x^\epsilon), B^H), \qquad (\tilde{\bar x}, \tilde{\bar v}, \tilde{B}^H) 
  \stackrel{(d)}{=} (\bar x, V(\bar x), B^H)$$
and as $\epsilon \to 0$, $(\tilde x^\epsilon, \tilde{v}^\epsilon, \tilde{B}^{H, \epsilon})$ converges 
almost surely in $C^{H -} \oplus C^{\f 12-} \oplus C^{H -}$ to $(\tilde{\bar x}, \tilde{\bar v}, \tilde{B}^H)$.

\begin{remark}
  It might be tempting to apply a strengthened form of the Skorohod embedding
  theorem  which fixes a common $\tilde B^H$ for all $\epsilon$.
  Should this be possible, a much simpler form of the Residue lemma would suffice for our purposes.
  However, as was pointed out, with a counter-example in \cite{Ondrejat:25}, this strong form of 
  embedding cannot generally expect to hold.

  Moreover, the eagle-eyed reader may have noticed that to apply the Skorohod embedding theorem, the 
  underlying space has to be separable which is not the case for general H\"older spaces. Nevertheless, 
  as the objects we are interested in are limits of smooth functions with respect to the H\"older norm, 
  we can avoid the issue of separability by restricting ourselves to the \textit{little} H\"older 
  space (i.e. the closure of $C^\infty$ in $C^\alpha$) which is separable.
\end{remark}

From now on we assume that the stochastic processes are defined on the common probability space and 
the convergence holds almost surely. We let $\tilde z^\epsilon, \tilde z$ to denote respectively the 
solutions to the controlled YDEs
\begin{equs}\label{eq:tilde-z-eps-def}
  \tilde z_t^\epsilon &= \tilde{v}^\epsilon_t +
  \int_0^t \left(\int_0^1 D\bar f (\theta \tilde x^\epsilon_r + (1 - \theta)\tilde{\bar x}_r) \dd \theta\right)
  \tilde z^\epsilon_r \dd \tilde B^{H, \epsilon}_r\\
  \label{eq:tilde-z-def}
  \tilde z_t &= \tilde{\bar v}_t + \int_0^t D \bar f(\tilde{\bar x}_r) \tilde z_r \dd \tilde B^H_r.
\end{equs}
Controlled YDEs of the above form are weakly well-posed (c.f. Remark~\ref{rem:weak-uniqueness}), it follows that
$\tilde z^\epsilon \stackrel{(d)}{=} z^\epsilon$ and $\tilde z \stackrel{(d)}{=} z$. To
conclude weak convergence of $z^\epsilon$ to $z$ in $C^{\f 12-}$, it suffices to show
$\|\tilde z^\epsilon - \tilde z\|_{\f 12-} \to 0$ almost surely.

We first state a useful result.
\begin{lemma}\label{lem:DFn-cvg}
  Taking $F \in C^2_b$, $\beta < \alpha \in (0, 1)$ and a sequence $(x^n)$
  converging to $x$ in $C^\alpha([0, T])$, we have that
  \begin{equation}
    \lim_{n \to \infty} \|DF(x^n) - DF(x)\|_\beta = 0,
  \end{equation}
  where the convergence is in $C^\beta([0,T]; \L(\R^d; \R^d))$.
\end{lemma}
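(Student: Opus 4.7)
The plan is to combine a uniform bound on $\|DF(x^n) - DF(x)\|_\alpha$ with uniform convergence $\|DF(x^n) - DF(x)\|_\infty \to 0$, and then interpolate to recover H\"older convergence at the strictly lower exponent $\beta < \alpha$.

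First, I would observe that since $(x^n)$ converges in $C^\alpha$ to $x$, the sequence is bounded in $C^\alpha$ and in particular uniformly bounded as paths in $\R^d$. As $F \in C_b^2$, both $DF$ and $D^2 F$ are bounded on all of $\R^d$, so $DF$ is globally Lipschitz with constant $\|D^2F\|_\infty$. This yields
\begin{equation*}
  \|DF(x^n) - DF(x)\|_\alpha \;\le\; \|D^2F\|_\infty \bigl(\|x^n\|_\alpha + \|x\|_\alpha\bigr) \;\le\; M
\end{equation*}
for some $M$ independent of $n$. At the same time, uniform convergence $x^n \to x$ in $C([0,T])$ (which follows from $C^\alpha$ convergence) together with continuity of $DF$ gives $\|DF(x^n) - DF(x)\|_\infty \to 0$.

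Next I would apply the elementary interpolation inequality for H\"older seminorms: for any continuous $g \colon [0,T] \to \L(\R^d;\R^d)$,
\begin{equation*}
  \frac{\|g_t - g_s\|}{|t-s|^\beta}
  \;=\; \left(\frac{\|g_t-g_s\|}{|t-s|^\alpha}\right)^{\beta/\alpha}\|g_t-g_s\|^{1-\beta/\alpha}
  \;\le\; \|g\|_\alpha^{\beta/\alpha}\,(2\|g\|_\infty)^{1-\beta/\alpha},
\end{equation*}
so that $\|g\|_\beta \lesssim \|g\|_\infty^{1-\beta/\alpha}\|g\|_\alpha^{\beta/\alpha}$. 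Applied to $g = DF(x^n) - DF(x)$, the two bounds above yield
\begin{equation*}
  \|DF(x^n) - DF(x)\|_\beta \;\lesssim\; M^{\beta/\alpha}\,\|DF(x^n) - DF(x)\|_\infty^{1-\beta/\alpha} \;\longrightarrow\; 0,
\end{equation*}
which is the claim.

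The only mildly delicate point is the uniform convergence of $DF(x^n)$ to $DF(x)$, but since $\{x^n\} \cup \{x\}$ is uniformly bounded in $\R^d$ (by $C^\alpha$ convergence) and $DF$ is uniformly continuous on bounded sets, this is immediate from $\|x^n - x\|_\infty \to 0$. No other step requires more than boundedness of $D^2 F$ and the interpolation identity.
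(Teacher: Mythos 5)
Your proof is correct. Both you and the paper rest the argument on the same two ingredients: a uniform bound $\|DF(x^n)-DF(x)\|_\alpha \le \|D^2F\|_\infty(\|x^n\|_\alpha+\|x\|_\alpha)$ and uniform convergence $\|DF(x^n)-DF(x)\|_\infty\to 0$, the latter being immediate from the global Lipschitz bound on $DF$. Where you differ is in how these are combined: the paper invokes the compactness of the embedding $C^\alpha\hookrightarrow C^\beta$ (valid here because the target space is finite-dimensional), so that the $C^\alpha$-bounded sequence is pre-compact in $C^\beta$ and every subsequential limit is identified by the sup-norm limit; you instead use the elementary interpolation inequality $\|g\|_\beta\lesssim \|g\|_\infty^{1-\beta/\alpha}\|g\|_\alpha^{\beta/\alpha}$, which is correct as you state it. Your route is arguably preferable: it avoids any compactness or subsequence extraction, it does not require finite-dimensionality of the target, and it yields a quantitative rate of convergence in $C^\beta$ in terms of $\|x^n-x\|_\infty^{1-\beta/\alpha}$, none of which the compactness argument provides.
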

\begin{proof}
  As we are now working with finite dimensional spaces, the embedding $C^\alpha \hookrightarrow C^\beta$ 
  is compact, and any bounded subset of $C^\alpha([0, T])$ is pre-compact in $C^\beta([0, T])$. It suffices to show
  $\{DF(x^n) - DF(x)\}_{n}$ is bounded in $C^\alpha$ and $DF(x^n) - DF(x)$ converges in the supremum norm.
  The bound in $C^\alpha$ is immediate.  For any $s < t \in [0, T]$, we have
  \begin{equation}\label{eq:DF-bound}
    \begin{split}
      \|(DF(x^n) - DF(x))_{s, t}\| & \le \|DF(x^n_t) - DF(x^n_s)\| + \|DF(x_t) - DF(x_s)\|               \\
                                   & \le \|D^2F\|_\infty (\|x^n\|_\alpha + \|x\|_\alpha) |t - s|^\alpha.
    \end{split}
  \end{equation}
  Since  $x^n$ converges to $x$ in $C^\alpha$, the H\"older norm is uniformly bounded in $n$
  $$\|DF(x^n) - DF(x)\|_\alpha \le \|D^2F\|_\infty (\|x^n\|_\alpha + \|x\|_\alpha).$$

  For the convergence, note that
  $$\|DF(x^n) - DF(x)\|_\infty \le \sup_{t \in [0, T]} \|D^2F\|_\infty \|x^n_t - x_t\| \to 0
    \le \|D^2F\|_\infty \|x^n - x\|_\alpha T^\alpha$$
  as $n \to \infty$, completing the proof.
\end{proof}

Finally, we present the proof of Theorem~\ref{thm:main}.
For the remainder of this section, we drop the tilde sign from the notations (for example we write 
$z^\epsilon$ in place of $\tilde z^\epsilon$).

\begin{proof}[of Theorem~\ref{thm:main}]
  Let $z^\epsilon$ and $z$ be as defined in Equations~\eqref{eq:tilde-z-eps-def} and
  \eqref{eq:tilde-z-def} respectively.
  As remarked in Lemma \ref{lem:DFn-cvg}, it suffices to show that
  $\|z^\epsilon - z\|_\alpha \to 0$ almost surely.
  Take  $\gamma = \beta \in (\f 12, H), \alpha \in (1 - \beta, \f 12)$  in Lemma~\ref{lem:residue-yde}, 
  we obtain 
  $$\|z^\epsilon - z\|_\alpha \le C^\epsilon
    (\|v^\epsilon - \bar v\|_\alpha + \|B^{H, \epsilon} - B^H\|_\beta
    + \|A^\epsilon - A\|_\beta)$$
  where $A_t = D\bar f (\bar x_t)$, and denoting  $x^{\theta, \epsilon}_t=\theta x^\epsilon_t + (1 - \theta)\bar x_t)$,
  \begin{equs}
    C^\epsilon = C_1 e^{C_2 (\|A^\epsilon\|_\beta^{\frac{1}{\beta}} + \|B^{H, \epsilon}\|_\beta^{\frac{1}{\beta}}
    + \|A\|_\beta^{\frac{1}{\beta}} + \|B^H\|_\beta^{\frac{1}{\beta}})}, \qquad 
    A^\epsilon_t =\int_0^1 D\bar f(x^{\theta, \epsilon}_t) \dd \theta,
  \end{equs}
  with $C_1, C_2 $ positive constants.

  For any $\delta \in (0, H - \beta)$ and $\theta \in [0, 1]$, $x^{\theta, \epsilon} \to \bar x$ in 
  $C^{\beta + \delta}$. It follows from Lemma~\ref{lem:DFn-cvg},
  $\|D\bar f(x^{\theta, \epsilon}) - D\bar f(\bar x)\|_\beta \to 0$ as $\epsilon \to 0$.
  Moreover, by Equation~\eqref{eq:DF-bound}, we have the uniform bound
  \begin{align*}
    \sup_\epsilon \sup_{\theta \in [0, 1]} \|D\bar f(x^{\theta, \epsilon}) - D\bar f(\bar x)\|_\beta
     & \le \sup_\epsilon \sup_{\theta \in [0, 1]} \|D^2 \bar f\|_\infty (\|x^{\theta, \epsilon}\|_\beta + \|\bar x\|_\beta) \\
     & \le \|D^2 \bar f\|_\infty (\sup_\epsilon \|x^\epsilon\|_\beta + 2 \|\bar x\|_\beta)<\infty
  \end{align*}
  which follows from the convergence of $x^\epsilon \to \bar x$ in $C^\beta$.
  The dominated convergence theorem yields
  $$\lim_{\epsilon \to 0}\|A^\epsilon - A\|_\beta
    \le \lim_{\epsilon \to 0} \int_0^1 \|D\bar f(x^{\theta, \epsilon}) - D\bar f(\bar x)\|_\beta \dd \theta = 0.$$
  This completes the proof as the first two terms in $\|z^\epsilon - z\|_\alpha$ converges almost surely,
  $B^{H, \epsilon} \to B^H$ in $C^\beta$ and $v^\epsilon \to \bar v$
  in $C^\alpha$.
\end{proof}

\begin{appendix}

  \section{Proof of Lemma~\ref{lem:GD-kappa}}\label{sec:proof-GD-kappa}
  In this section we prove Lemma~\ref{lem:GD-kappa}. The proof is based on the following auxiliary 
  result, which allows to estimate the integral $\tilde J^\epsilon(\Delta,p)$
  by reducing to elementary base cases. Let $B$ be a finite set. Recall that  $\mathcal{G}^S(B)$ 
  consists of $(\Delta, p)$ where $\Delta$ is a partition of $B$ and $p$ is a partition of $B$ 
  consisting of pairings, and $(\Delta', p')$ is connected  if$\Delta \vee p = \{B\}$, c.f.~\ref{sec:cumulant}.

  \begin{lemma}\label{lem:restrict-G}
    Let $B$ be a finite index set and $B'\subset B$. Let $(\Delta,p)\in \mathcal{G}^S(B)$ such that 
    $\Delta$ has no singletons. Denote the restrictions of  $\Delta$ and $p$ to $B'$ by $\Delta'$ and 
    $p'$ respectively. Assume that $\Delta'$ has no singletons, that $p'$ is made of pairs, and that 
    $(\Delta', p')$ is connected. Let us also recall the notation
    $$\tilde J^\epsilon(\Delta, p) = \epsilon^{ \frac{|B|}{2}}\int_{[\frac{s}{\epsilon}, \frac{t}{\epsilon}]^B}
      \prod_{\substack{A \in \Delta}} e^{-c \sum_{i,j \in A}|r_i-r_j|}
      \prod_{\substack{\{a,b\} \in p }}|r_a - r_b|^{2H-2} \dd r^B.$$
    Then the following holds
    $$ \tilde{J}^\epsilon(\Delta, p) \lesssim  \tilde{J}^\epsilon(\Delta', p').$$
  \end{lemma}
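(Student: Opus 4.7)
The plan is to integrate out the extra variables $B'' := B \setminus B'$ and reduce what is left to $\tilde J^\epsilon(\Delta', p')$ up to a uniform multiplicative constant. First, I would perform a purely algebraic rewriting of the integrand: for each mixed block $A \in \Delta$ (i.e., $A \cap B' \neq \emptyset \neq A \cap B''$) I split
\[
e^{-c\sum_{i,j \in A}|r_i - r_j|}
 = e^{-c \sum_{\{i,j\} \subset A \cap B'}|r_i - r_j|}\cdot e^{-c\sum_{\{i,j\} \subset A \cap B''}|r_i - r_j|}\cdot \prod_{i \in A \cap B',\, j \in A \cap B''} e^{-c|r_i - r_j|}.
\]
Combined with the pure-$B'$ block exponentials and the power factors from $p_{B'} = p'$ (the hypothesis that $p'$ is made of pairs forces every pair of $p$ touching $B'$ to be fully contained in $B'$), the first family of factors reconstructs exactly the integrand $\mathcal I'(r^{B'})$ of $\tilde J^\epsilon(\Delta', p')$, while the remaining factors form a coupling integrand $\mathcal I_{\mathrm{cpl}}(r^B)$.

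By Fubini the lemma then reduces to the pointwise bound
\[
G(r^{B'}) := \int_{[s/\epsilon,\,t/\epsilon]^{B''}} \mathcal I_{\mathrm{cpl}}(r^B)\, dr^{B''} \le C\, \epsilon^{-|B''|/2},
\]
uniform in $r^{B'}$; multiplying by $\epsilon^{|B|/2} = \epsilon^{|B'|/2}\epsilon^{|B''|/2}$ and integrating $\mathcal I'$ over $B'$ then produces $C\tilde J^\epsilon(\Delta',p')$. I would prove this pointwise bound by induction on $|B''|$, peeling off variables (or small groups thereof) using three elementary tools: (a) for $i \in A \cap B''$ with $A$ mixed, the hypothesis $|A \cap B'| \ge 2$ (which follows from the no-singleton condition on $\Delta'$) provides an exponential tether $e^{-c|r_i - r_j|}$ to some fixed $r_j$ in $B'$, yielding $\int e^{-c|r_i - r_j|}|r_i - r_{i'}|^{2H-2}\,dr_i \le C$ uniformly via the elementary identity $\int e^{-c|u|}|u + d|^{2H-2}\,du \le C$ (uniform in $d$); (b) when a pair $\{a,b\} \in p_{B''}$ has both ends in the same $\Delta$-block, one obtains directly $\int\!\!\int e^{-c|r_a - r_b|}|r_a - r_b|^{2H-2}\,dr_a\, dr_b \le C\epsilon^{-1}$; (c) for a pure $B''$-block $A \in \Delta_{B''}$, integrating along a spanning tree of its pairwise exponentials costs $\le C$ per leaf and $\le C\epsilon^{-1}$ for the last variable, so the block as a whole contributes $\le C\epsilon^{-|A|/2}$. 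Averaging these costs gives $\le C\epsilon^{-1/2}$ per variable, which matches the target.

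The hard part will be the case of a pair $\{a, b\} \in p_{B''}$ whose endpoints sit in two distinct pure $B''$-blocks, so that $|r_a - r_b|^{2H-2}$ has no block exponential of its own to absorb the short-range singularity. To handle it, the key observation is that, because $p'$ is made of pairs (no mixed pairs in $p$) and $(\Delta, p)$ is connected on $B$, every connected component of $B''$ (under the restricted partitions $\Delta_{B''} \cup \{A \cap B'' : A \in \Delta_m\}$ and $p_{B''}$) must contain at least one element belonging to a mixed block, hence at least one tether to $B'$. Using that tether as an anchor, I would propagate the available exponential decay along the component's pair-and-block chain to the bad pair, and combine the two powers with the surrounding block exponentials through iterated applications of the identity $\int e^{-c|u|}|u + d|^{2H-2}\,du \le C$; for a minimal ``Z-shape'' (two blocks of size two linked by two paired edges) this gives directly $\le C\epsilon^{-2} = C\epsilon^{-|T|/2}$ for the four-variable block $T$, consistent with the target. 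This closes the induction, and the main technical burden is the bookkeeping showing that every component of $B''$ reduces, by an appropriate choice of peeling order, to a sequence of moves of type (a)--(c) possibly preceded by such a Z-shape reduction.
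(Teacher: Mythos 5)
Your overall strategy coincides with the paper's: reduce via Fubini to a pointwise bound $\sup_{r^{B'}}G(r^{B'})\lesssim\epsilon^{-|B\setminus B'|/2}$ on the inner integral over the extra variables, and obtain that bound by integrating, for each pair of $p\setminus p'$, one endpoint ``for free'' against an exponential tether together with its own singular factor, while the other endpoint pays a volume factor $\epsilon^{-1}$. Your elementary estimates and the uniform bound $\int e^{-c|u|}|u+d|^{2H-2}\,\dd u\le C$ are correct and are exactly the single-variable bounds the paper uses (there via Hardy--Littlewood symmetrization). Your structural observations are also right: since $p'$ consists of pairs, no pair of $p$ straddles $B'$ and $B\setminus B'$, and connectedness forces every component of $B\setminus B'$ to reach $B'$ through a mixed block.

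The gap is at the step you yourself flag as ``the main technical burden'': you assert, but do not prove, that every component of $B\setminus B'$ admits a peeling order reducing it to moves (a)--(c), possibly preceded by a Z-shape reduction. That assertion is the actual content of the lemma, and as stated your case analysis does not obviously cover general components (long alternating chains of pure blocks and pairs, branching trees, several ``bad'' pairs in one component, cycles inside $B\setminus B'$); moreover move (c) is not self-contained, since the variables of a pure block also carry singular factors from pairs that must be consumed somewhere, and the closing remark that averaging gives $\epsilon^{-1/2}$ per variable is a heuristic, not an accounting. The paper closes exactly this gap with a single combinatorial device: a bi-partition $C^+\sqcup C^-=B\setminus B'$ such that every pair of $p\setminus p'$ has one endpoint in each part and every $a^+\in C^+$ has a $\Delta$-neighbour $\phi(a^+)\in B'\cup C^-$; it is built greedily, at each step picking a not-yet-assigned pair one of whose endpoints lies in a block already meeting $B'\cup C^-$, which connectedness guarantees is always possible. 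Discarding all exponentials except the tethers $e^{-c|r_{a^+}-r_{\phi(a^+)}|}$, the inner integral over $C^+$ then factorizes into single-variable integrals, each bounded by $\int_{\R}e^{-c|x|}|x|^{2H-2}\,\dd x<\infty$, and the outer integral over $C^-$ is bounded by the volume $\epsilon^{-|C^-|}=\epsilon^{-|B\setminus B'|/2}$. Your ``propagation along the chain'' is precisely this greedy construction; making it explicit in this form removes the need for any Z-shape analysis or induction on component shapes.
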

  \begin{proof}

    In the following, if $a,b$ belongs to the same set in a partition $\Delta$, we write 
    $a\sim_{\Delta}b$. We also write $[a]_\Delta$ for the equivalence class of $a$ under the relation 
    $\sim_\Delta$. The assumptions that $p'$ is made of pairs means that if a pair $\pi$ belongs to 
    $p$ then either $\pi \subseteq B'$ or $\pi \cap B' = \varnothing$.

    Using Fubini and the above remark, we can write that
    $$\tilde J^\epsilon(\Delta, p) = \epsilon^{ \frac{|B|}{2}}\int_{[\frac{s}{\epsilon}, \frac{t}{\epsilon}]^{B'}}
      \prod_{\substack{A \in \Delta'}} e^{-c \sum_{i,j \in A}|r_i-r_j|}
      \prod_{\substack{\{a,b\} \in p' }}|r_a - r_b|^{2H-2} F(r^{B'}) \dd r^{B'},$$
    where we introduced the notation
    $$F(r^{B'}) := \int_{[\frac{s}{\epsilon}, \frac{t}{\epsilon}]^{B\setminus B'}}
      \prod_{\substack{A \in \Delta}} e^{-c \sum_{i,j \in A, i \in B\setminus B'}|r_i-r_j|}
      \prod_{\substack{\{a,b\} \in p\setminus p' }}|r_a - r_b|^{2H-2} \dd r^{B\setminus B'}\:.$$
    From this, we have by a supremum bound that $\tilde{J}^\epsilon(\Delta,p) \leq 
    \epsilon^{\frac{|B\setminus B'|}{2}} \tilde{J}^\epsilon(\Delta', p')  \sup_{r^{B'} \in \mathbb{R}^{B'}} F(r^{B'})$.
    Consequently, the claim holds if one has
    $$(\star) := \sup_{r^{B'} \in \mathbb{R}^{B'}} F(r^{B'}) \lesssim \epsilon^{-\frac{|B\setminus B'|}{2}}.$$
    To see why this holds, first assume that we are given a bi-partition $C^+, C^-$ of the set $B\setminus B'$ such that
    the following hold:
    \begin{itemize}
      \item [(1)]Each pair $\pi \in p$ is of the form $\pi = \{a^+, a^-\}$ with $a^+ \in C^+$ and $a^- \in C^-$.
      \item [(2)] For any $ a^+ \in C^+$,
            there exists some $\phi(a^+) \in B'\cup C^-$ such that $a^+ \sim_\Delta \phi(a^+)$.
    \end{itemize}
    Then, upper bounding by one all terms in the product that are not of the form $e^{-c|r_{a^+} - r_{\phi(a^+)}|}$ 
    yields
    $$(\star) \leq  \sup_{r^{B'} \in \mathbb{R}^{B'}} \int_{[\frac{s}{\epsilon}, \frac{t}{\epsilon}]^{C-}}
      \Big(\int_{[\frac{s}{\epsilon}, \frac{t}{\epsilon}]^{C^+}} \prod_{a^+\in C^+}e^{-c|r_{a^+} 
        - r_{\phi(a^+)}|}|r_{a^+}-r_{a^-}|^{2H-2} \dd r^{C+}\Big)\dd r^{C^-}.$$
    Thanks to the assumptions on the bi-partition constructed, the inner integral can be factorized 
    into single-variable integrals from which we obtain the estimate
    \begin{align*}
          & \int_{[\frac{s}{\epsilon}, \frac{t}{\epsilon}]^{C^+}} 
            \prod_{a\in C^+}e^{-c|r_a - r_{\phi(a)}|}|r_a-r_{a^-}|^{2H-2} \dd r^{C+}        \\
      =\  & \prod_{a^+\in C^+} \int_{[\frac{s}{\epsilon}, \frac{t}{\epsilon}]}e^{-c|r_{a^+} - r_{\phi(a^+)}|}
        |r_{a^+}-r_{a^-}|^{2H-2} \dd r_{a^+}
        \leq \left(\int_\mathbb{R} e^{-c|x|}|x|^{2H-2}\dd x\right)^{|C^+|}
    \end{align*}
    in which the inequality follows from the Hardy-Littlewood symmetrization inequality. The 
    right-hand side above is finite because $2H-2 > -1$. Consequently, 
    $(\star) \lesssim \epsilon^{-|C^{-}|}$ holds, which is precisely the desired bound.

    We are left with arguing that one can construct a bi-partition with the above properties. 
    This is done easily by induction. We begin with two empty sets $C_0^+ = C_0^- = \varnothing$ and 
    construct $C_{i + 1}^{\pm}$ as follows. Assuming $C_i^{\pm}$ are constructed, pick any new pair $\{a,b\}
      \in p\setminus p'$ such that $[a]_\Delta\cap (B'\cup C_i^{-}) \neq \varnothing$
    (which is always possible by construction, under
    our assumptions on $(\Delta, p) $ and $(\Delta', p')$), and let $C_{i+1}^+ = C_i^+\cup\{a\},C_{i+1}^- = C_i^-\cup\{b\}$. 
    This step can be repeated until all pairs in $p\setminus p'$ have been assigned to $C^+$ or $C^-$, 
    and produces a bi-partition as described above. This concludes the proof.
  \end{proof}
  From this auxiliary result we can now derive Lemma~\ref{lem:GD-kappa}. Let us recall the shorthand 
  notation $[N] = \{1,\cdots, N\}$ for any integer $N\geq 1$.
  \begin{corollary}\label{cor:GD-kappa}
    Let  $(\Delta, p) \in \mathcal{G}^S(B)$ be such that $|S_\Delta| = 0$, and $|B|>2$.
    Then, one has
    $$\tilde{J}^\epsilon(\Delta,p) \lesssim \epsilon^{2-2H}$$
  \end{corollary}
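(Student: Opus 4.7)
The strategy is to use Lemma~\ref{lem:restrict-G} to reduce to elementary base cases, and to verify those directly. Since $|S_\Delta|=0$ forces every block of $\Delta$ to have size at least two and $p$ is a perfect matching of $B$, we have $|B|\geq 4$ with $|B|$ even. For the base case $|B|=4$, up to relabeling either $\Delta=\{B\}$, or $\Delta$ is itself a pairing and the connectedness of $\Delta\vee p$ forces $p$ to cross it. In either case, a change of variables $u=r_1$ together with three relative coordinates produces a factor $(t-s)/\epsilon$ from integrating $u$; the exp-decay weights localize the ``short'' increments so the singular factors integrate to bounded constants (using $\int e^{-c|v|}|v|^{2H-2}\dd v<\infty$ for $H>1/2$); and the one ``long'' coordinate present in the crossing sub-case is bounded by $\int_{-L}^{L}|w|^{2H-2}\dd w\simeq L^{2H-1}=\epsilon^{1-2H}$. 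Altogether $\tilde J^\epsilon\lesssim \epsilon^2\cdot\epsilon^{-1}\cdot \epsilon^{1-2H}=\epsilon^{2-2H}$, which is sharp.

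For general $|B|>4$, the plan is to exhibit a proper $B'\subsetneq B$ with $|B'|=4$ that is a union of two pairs from $p$ and whose restriction $(\Delta',p')$ satisfies the hypotheses of Lemma~\ref{lem:restrict-G}; the lemma then gives $\tilde J^\epsilon(\Delta,p)\lesssim \tilde J^\epsilon(\Delta',p')\lesssim \epsilon^{2-2H}$ by the base case. When $\Delta$ has some block of size at least three, two pairs $\pi_1,\pi_2\in p$ whose endpoints lie suitably within large $\Delta$-blocks can be chosen so that both ``no singletons in $\Delta'$'' and ``connectedness of $(\Delta',p')$'' hold simultaneously.

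The genuine obstacle is the case where $\Delta$ is itself a perfect matching. Then any $B'\subsetneq B$ satisfying the hypotheses of Lemma~\ref{lem:restrict-G} must be a union of both $\Delta$-pairs and $p$-pairs, and since the graph $\Delta\cup p$ is $2$-regular, with $\Delta\vee p=\{B\}$ forcing it to be a single cycle of length $2n=|B|$, the only such $B'$ is $B$ itself. For these ``cycle configurations'' I would handle the integral directly: after fixing $r_1$ (contributing $(t-s)/\epsilon$), the $n$ short $\Delta$-increments $v_i$ integrate together with the closing $p$-edge factor $|\sum v_i+\sum w_j|^{2H-2}$ to a uniformly bounded constant (since $x\mapsto\int e^{-c|v|}|v-x|^{2H-2}\dd v$ is bounded for $H>1/2$), and each of the $n-1$ long $p$-increments $w_j\in[-L,L]$ contributes $\int_{-L}^{L}|w_j|^{2H-2}\dd w_j\simeq L^{2H-1}=\epsilon^{-(2H-1)}$. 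Altogether,
\begin{equ}
  \tilde J^\epsilon \;\lesssim\; \epsilon^n\cdot\epsilon^{-1}\cdot \epsilon^{-(n-1)(2H-1)} \;=\; \epsilon^{(n-1)(2-2H)} \;\leq\; \epsilon^{2-2H}
\end{equ}
for every $n\geq 2$.

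The principal difficulty I anticipate is not any single estimate but the combinatorial case analysis: proving that the dichotomy between the ``reducible'' case ($\Delta$ has a block of size $\geq 3$) and the ``cycle'' case ($\Delta$ is a perfect matching) is exhaustive, and that in the reducible case one can always construct a $B'$ of size four satisfying both the ``no singletons'' and the ``connectedness'' conditions at the same time. Once this dichotomy is established, both routes deliver the same bound $\epsilon^{2-2H}$, closing the proof.
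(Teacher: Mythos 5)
Your overall strategy --- restrict to a sub-configuration via Lemma~\ref{lem:restrict-G} and then estimate elementary base cases directly --- is the same as the paper's, and your treatment of the case where $\Delta$ is a perfect matching (a single alternating cycle of length $2n$) is essentially the paper's ``cycle'' case, with the correct exponent $\epsilon^{(n-1)(2-2H)}$. The genuine gap is in your ``reducible'' case: it is \emph{not} true that whenever $\Delta$ has a block of size at least three one can find two pairs $\pi_1,\pi_2\in p$ such that $B'=\pi_1\cup\pi_2$ has no $\Delta'$-singletons and $(\Delta',p')$ is connected. Take $B=\{1,\dots,6\}$, $\Delta=\{\{1,2,3\},\{4,5,6\}\}$ and $p=\{\{1,2\},\{4,5\},\{3,6\}\}$: this lies in $\mathcal{G}^S(B)$ with $S_\Delta=\varnothing$ and both blocks of size three. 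Since Lemma~\ref{lem:restrict-G} forces $B'$ to be a union of full $p$-pairs, the only candidates of size four are the three unions of two $p$-pairs: $\{1,2\}\cup\{4,5\}$ gives $\Delta'=p'$, hence disconnected, while $\{1,2\}\cup\{3,6\}$ and $\{4,5\}\cup\{3,6\}$ leave the $\Delta'$-singletons $\{6\}$ and $\{3\}$ respectively. In graph terms, your reduction requires the multigraph whose vertices are $\Delta$-blocks and whose edges are $p$-pairs to contain either two parallel edges or two self-loops at the same vertex; connectedness plus the existence of a large block guarantees a cycle in that multigraph, but not one of length at most two, which is what a four-element $B'$ needs.

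The paper circumvents this by allowing much larger base cases: when $(\Delta,p)$ contains an alternating cycle, $B'$ is the entire cycle (of arbitrary even length, estimated by the change of variables you also use); when it does not, $B'$ is an alternating path between two ``leaf'' blocks $A,A'$ together with one full $p$-pair contained in each of $A$ and $A'$, and the resulting tree-like integral is estimated by integrating out variables one at a time. The example above is exactly of this second type (with $B'=B$). To close your argument you would need to enlarge your list of base cases to include these path-plus-two-loops configurations --- at which point you have essentially reproduced the paper's proof --- since the reduction to $|B'|=4$ alone cannot succeed.
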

  \begin{proof}
    We distinguish two cases, whether $(\Delta,p)$ contains a \textit{cycle} or not. In either case, 
    we use the lemma above to construct a suitable $(\Delta',p')$ for which the bound
    can be proved directly.

    If $(\Delta,p)$ contains a cycle, (i.e. there exists $\{v_i, u_i\}_{i = 1}^N \subseteq B$ for 
    some $N \ge 2$, such that setting $u_{N + 1} = u_1$
    we have $u_i \sim_\Delta v_i$ and $v_i \sim_p u_{i+1}$ for all $i \in [N]$),
    let $B'$ denotes the set of all elements of the cycle.  By construction, $B'$ satisfies the assumptions
    of Lemma~\ref{lem:restrict-G}, namely, it is connected and using the same notations, $\Delta'$ 
    has no singletons and $p'$ is made of pairs.

    In that case, for the restriction $(\Delta',p')$ to $B'$, one has that $\Delta'$ consists of pairs. 
    Identifying the integration variables with elements of the index set for simplicity 
    ($r_{u_i} = u_i, r_{v_i} = v_i$), one has
    $$\tilde{J}^\epsilon(\Delta',p') = \epsilon^N\int_{[\frac{s}{\epsilon},\frac{t}{\epsilon}]^{2N}}
      \prod_{i=1}^Ne^{-c|u_i-v_i|}|v_i-u_{i+1}|^{2H-2} \dd u^{[N]} \dd v^{[N]}.$$

    This integral can be now estimated directly. Setting $p_i = v_i-u_{i+1}, q_i = u_i -v_i$, 
    notice that $\sum_{i=1}^N p_i + \sum_{i=1}^N q_i = 0$. Consequently, there exists some $q_j$ such that $|q_j|
      \geq\frac{1}{N}|\sum_{i=1}^N p_i |$. Without loss of generality, assume by symmetry that $j = N$. Then,
    $$\tilde{J}^\epsilon(\Delta',p') \lesssim  \epsilon^N\int_{[\frac{s}{\epsilon},\frac{t}{\epsilon}]^{2N}}
      e^{-c\sum_{i=1}^N| p_i |}\left|\sum_{i=1}^N p_i \right|^{2H-2}\prod_{i=1}^{N-1}|q_i|^{2H-2}  \dd u^{[N]} \dd v^{[N]}.$$
    Performing the change of variables $u,v \mapsto (p_i)_{i = 1,\cdots,N},(q_i)_{i = 1,\cdots,N-1},u_1$ 
    (which has a Jacobian equal to one), $u_1$ can now be integrated out and the domain of integration extended to get
    $$\tilde{J}^\epsilon(\Delta',p') \lesssim  \epsilon^{N-1}\int_{[\frac{-K}{\epsilon},\frac{K}{\epsilon}]^{2N-1}}
      e^{-c\sum_{i=1}^N| p_i |}\left|\sum_{i=1}^N p_i \right|^{2H-2}\prod_{i=1}^{N-1}|q_i|^{2H-2}  \dd p^{[N]} \dd q^{[N-1]}$$
    for some $K \geq |s|+|t|$. The upper bounding integral now factorizes as
    $$\tilde{J}^\epsilon(\Delta',p') \lesssim \epsilon^{N-1}\left( \int_{[-\frac{K}{\epsilon},\frac{K}{\epsilon}]}
      |q|^{2H-2} \dd q\right)^{N-1}\int_{\mathbb{R}^N} e^{-c\sum_{i=1}^N| p_i |} \left|\sum_{i=1}^N p_i \right|^{2H-2} \dd p^{[N]} $$
    where the right-most integral is finite, yielding $ \tilde{J}^\epsilon(\Delta',p') \lesssim \epsilon^{N-1-(2H-1)(N-1)}
      \leq \epsilon^{2-2H}$.

    For the second case,  when $(\Delta,p)$ contains no cycle,  there must exist two disjoint sets 
    $A, A'\in \Delta$, each of them connected to $B\setminus(A\cup A')$ by a single pair $\pi_A = 
    \{v_1,u_2\}, \pi_{A'} = \{v_N,u_{N+1}\} \in p$ respectively. Moreover, as $|S_\Delta| = 0$, $A$ 
    and $A'$ must each contain at least a pair in $p$. Let us denote those pairs by
    $\{x_1, x_2\}$ and $\{y_1, y_2\}$ respectively.
    By connectedness, one can pick a path
    $$A \ni v_1 \sim_p u_2 \sim_\Delta v_2 \sim_p \cdots \sim_\Delta v_N \sim_p u_{N + 1} \in A'$$
    from $A$ to $A'$. Denoting $B' = \{v_i, u_{i + 1}\}_{i = 1}^N \cup \{x_1, x_2, y_1, y_2\}$,
    it is clear that $B'$ falls under the assumptions of Lemma~\ref{lem:restrict-G}, and defines a 
    $(\Delta',p')$ for which $\tilde{J}^\epsilon(\Delta',p')$ is given by
    \begin{align*}
      \epsilon^{N + 2} \int_{[\frac{s}{\epsilon},\frac{t}{\epsilon}]^{2N+2}} & e^{-c(|x_1-x_2|+|x_1-v_1|+|x_2-v_1|)}
      |x_1-x_2|^{2H-2}\prod_{i=1}^Ne^{-c|u_i-v_i|}|v_i-u_{i+1}|^{2H-2}                                                                                                       \\
                                                                             & e^{-c(|y_1-y_2|+|y_1-u_{N+1}|+|y_2-u_{N+1}|)} |y_1-y_2|^{2H-2}\dd u^{[N]} \dd v^{[N]} \dd x_1
      \dd x_2 \dd y_1 \dd y_2
    \end{align*}
    with the convention $u_1 = v_1$, where $N + 2 = \frac{|B'|}{2}$.

    Now, integrating with respect to $\dd x_1 \dd x_2 \dd y_1 \dd y_2$ yields a finite contribution, 
    leaving us with
    $$\epsilon^{-N-2} \tilde{J}^\epsilon(\Delta',p') \lesssim\int_{[\frac{s}{\epsilon},\frac{t}{\epsilon}]^{2N}} \prod_{i=1}^Ne^{-c|u_i-v_i|}
      |v_i-u_{i+1}|^{2H-2} \dd u^{[N]}\dd v^{[N]}.$$
    Since the variables do not form a cycle, we can integrate repeatedly to obtain
    $$\epsilon^{-2} \tilde{J}^\epsilon(\Delta',p') \lesssim \int_{[\frac{s}{\epsilon},\frac{t}{\epsilon}]^{2}} |v-u|^{2H-2} \dd u \dd v
      \lesssim \epsilon^{1-2H}.$$
    This concludes the proof in this case.
  \end{proof}

  \section{Continuity of the It\^o-Lyons map in Wasserstein distance}

  In this section We establish a quantitative version of the
  Yamada-Watanabe theorem which provides a control for the Wasserstein distance between the solutions
  based on the distance between the driving rough paths.

  Let $\alpha \in (\frac{1}{3}, \frac{1}{2}]$ and $\CX, \CY$ be Banach spaces. For $i = 1, 2$, taking
  $\mathbf{X}^i = (X^i, \mathbb{X}^i) : \Omega \to \CC^\alpha([0, T], \CX)$ to be random rough paths
  of regularity $\alpha$ and $F : \CY \to \mathbb{L}(\CX, \CY)$ be $\CC^3_b$, we consider the RDEs
  \begin{equation}\label{eq:rde}
    \dd Y^i_t = F(Y^i_t) \dd \mathbf{X}^i_t,\qquad Y^i_0 = \xi.
  \end{equation}

  \begin{lemma}[Stability in Wasserstein distance of RDEs]\label{thm:stability-rde}
    Assuming $p, q \ge 1$ are such that
    $$\mathbb{E}[\|\mathbf{X}^1\|_\alpha^{3pq}],\ \mathbb{E}[\|\mathbf{X}^2\|_\alpha^{3pq}] < \infty.$$
    Then, for any $r \ge 1$ for which $\frac{1}{q} + \frac{1}{r} \le 1$, we have the estimate
    $$\CW_{\alpha}^p(Y^1, Y^2) \lesssim \CW_{\CC^\alpha}^{pr}(\mathbf{X}^1, \mathbf{X}^2).$$ \end{lemma}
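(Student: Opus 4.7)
I would deduce the Wasserstein estimate by combining the path-wise continuity of the Itô-Lyons solution map with an optimal coupling argument and Hölder's inequality. The key technical input is a quantitative version of the stability theorem for RDEs in the spirit of Theorem 8.5 recalled in the preliminaries, but one that tracks the polynomial dependence of the stability constant on the rough path norms. Concretely, under $F\in C^3_b$ and $\alpha\in(\tfrac13,\tfrac12]$ with common initial condition $\xi$, standard controlled rough path analysis yields the path-wise bound
$$\|Y^1-Y^2\|_\alpha \lesssim \bigl(1+\|\mathbf X^1\|_\alpha+\|\mathbf X^2\|_\alpha\bigr)^{3}\,\rho_\alpha(\mathbf X^1,\mathbf X^2),$$
where the polynomial degree $3$ is precisely what dictates the factor $3pq$ in the moment hypothesis. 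This improvement over the qualitative $C_M$ of Theorem 8.5 can be extracted either by a direct Gronwall-type bookkeeping in the proof of that theorem, or by running a greedy partitioning argument in the spirit of Cass-Litterer-Lyons that divides $[0,T]$ into subintervals on which $\|\mathbf X^i\|_\alpha$ is below a small absolute constant and iterating the small-data stability estimate.

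Given this path-wise inequality, I would fix $\eta>0$ and select an $\eta$-optimal coupling of $(\mathbf X^1,\mathbf X^2)$ on a common probability space that realizes $\mathcal W_{\mathcal C^\alpha}^{pr}(\mathbf X^1,\mathbf X^2)$ to within $\eta$. Solving the two RDEs on this joint space produces a coupling $(Y^1,Y^2)$ of the respective laws, whence by the definition of the Wasserstein distance $\mathcal W_\alpha^p(Y^1,Y^2)\le \mathbb{E}[\|Y^1-Y^2\|_\alpha^p]^{1/p}$. Taking the $L^p$ norm of the path-wise bound and applying Hölder's inequality with the dual pair $(q,r)$ (using $\tfrac1q+\tfrac1r\le 1$, with an $L^\infty$ interpolation slot absorbing any slack when the inequality is strict) yields
$$\mathbb{E}\bigl[\|Y^1-Y^2\|_\alpha^p\bigr]^{1/p} \lesssim \mathbb{E}\bigl[(1+\|\mathbf X^1\|_\alpha+\|\mathbf X^2\|_\alpha)^{3pq}\bigr]^{1/(pq)}\,\mathbb{E}\bigl[\rho_\alpha(\mathbf X^1,\mathbf X^2)^{pr}\bigr]^{1/(pr)}.$$
The first factor is finite by hypothesis and independent of the chosen coupling, while the second factor is bounded by $\mathcal W_{\mathcal C^\alpha}^{pr}(\mathbf X^1,\mathbf X^2)+\eta$ by our choice of coupling. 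Letting $\eta\to 0$ yields the claim.

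\textbf{Where the work lies.} The entire content of the lemma reduces to supplying the quantitative path-wise stability with explicit polynomial growth; once that is granted, the remainder is a one-line Hölder split and the definition of Wasserstein distance. The delicate point, and the only place the specific constant $3$ enters, is tracking the degree of the polynomial dependence on $\|\mathbf X^i\|_\alpha$ in the Gronwall lemma for controlled rough paths. For $C^3_b$ vector fields at H\"older regularity $\alpha\in(\tfrac13,\tfrac12]$, the standard argument gives a degree that is at most cubic, which is why the hypothesis requires moments of order $3pq$; a more refined stability analysis would allow the exponent to be reduced, at the cost of carrying out an additional bookkeeping that is not needed for the applications of this lemma elsewhere in the paper.
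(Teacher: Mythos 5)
Your proposal is correct and follows essentially the same route as the paper: a path-wise Lipschitz estimate for the Itô--Lyons map with stability constant growing cubically in the rough path norms (the paper cites Gubinelli's Proposition 8 for $C = 1 \vee \|\mathbf X^1\|_\alpha^3 \vee \|\mathbf X^2\|_\alpha^3$), followed by pushing a (near-)optimal coupling of the drivers through the solution map and splitting with Hölder's inequality at exponents $(q,r)$.
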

  \begin{proof}
    By standard RDE theory \cite[c.f. Proposition 8]{Gubinelli:03}, there exists a unique map $\Phi$, such that
    $Y^i = \Phi(\xi, \mathbf X^i)$ is the solution to the RDE~\eqref{eq:rde}.
    Furthermore, one has
    $$\|Y^1 - Y^2\|_\alpha = \|\Phi(\xi, \mathbf X_1)-\Phi(\xi, \mathbf X_2)\|_\alpha
      \le C\|\mathbf X_1-\mathbf X_2\|_{\CC^\alpha}$$
    with $C=1 \vee \|\mathbf{X}^1\|_\alpha^3 \vee \|\mathbf{X}^2\|_\alpha^3$.
    Thus, denoting $\mu_i$ for the law of $\mathbf{X}^i$, for any $\mu$, a coupling of the laws $\mu_1$ and $\mu_2$,
    we have that the push-forward of $\mu$ along the flow $(\Phi(\xi, \cdot), \Phi(\xi, \cdot))$ is
    a coupling of the laws of $Y^1$ and $Y^2$. Consequently,
    \begin{align*}
      \CW_{\alpha}^p(Y^1, Y^2) & \le \mathbb{E}_{(\mathbf{X}^1, \mathbf{X}^2) \sim \mu}
      [\|\Phi(\xi, \mathbf{X}^1) - \Phi(\xi, \mathbf{X}^2)\|_\alpha^p]^{\frac{1}{p}}                                                                            \\
                               & \le \mathbb{E}_{(\mathbf{X}^1, \mathbf{X}^2) \sim \mu}[C^p \|\mathbf{X}^1 - \mathbf{X}^2\|_{\alpha}^p]^{\frac{1}{p}}           \\
                               & \le \mathbb{E}_{(\mathbf{X}^1, \mathbf{X}^2) \sim \mu}[C^{pq}]^{\frac{1}{pq}}
      \mathbb{E}_{(\mathbf{X}^1, \mathbf{X}^2) \sim \mu}[\|\mathbf{X}^1 - \mathbf{X}^2\|_{\alpha}^{pr}]^{\frac{1}{pr}}                                          \\
                               & \lesssim (\mathbb{E}[\|\mathbf{X}^1\|_\alpha^{3pq}]^{\frac{1}{pq}} + \mathbb{E}[\|\mathbf{X}^2\|_\alpha^{3pq}]^{\frac{1}{pq}})
      \mathbb{E}_{(\mathbf{X}^1, \mathbf{X}^2) \sim \mu}[\|\mathbf{X}^1 - \mathbf{X}^2\|_{\alpha}^{pr}]^{\frac{1}{pr}}.
    \end{align*}
    Hence, we obtain the desired estimate by taking the infimum over all couplings $\mu$ of $\mu_1$ and $\mu_2$.
  \end{proof}

\end{appendix}


\end{document}